\title{Discrete Morse Theory for totally non-negative flag varieties}
\author{Konstanze Rietsch}%
\author{Lauren Williams}
\address{Department of Mathematics,
            King's College London,
            Strand, London
            WC2R 2LS}%
\address{Department of Mathematics,
            Harvard University,
            Cambridge, MA 02138}
\email{konstanze.rietsch@kcl.ac.uk}%
\email{lauren@math.harvard.edu}
\thanks{
The first author is funded by
EPSRC grant EP/D071305/1.  The second author is partially
supported by the NSF}%
\subjclass[2000]{Primary 57T99, 05E99; Secondary 20G20, 14P10}
\keywords{Total positivity, partial flag varieties, shellability, reflection
orders, regular
CW complexes, discrete Morse theory}
\def\vblack(#1, #2)#3{\cnode*[linecolor=black](#1, #2){3}{#3}}
\def\vwhite(#1,#2)#3{\cnode[linecolor=black,fillcolor=white,fillstyle=solid](#1,#2){3}{#3}}
\def\tbox(#1,#2)#3{
\x=#1 \y=#2
\multiply\x by 12
\multiply\y by 12
\z=\x \t=\y
\advance\z by 12
\advance\t by 12
\psline(\x,\y)(\x,\t)(\z,\t)(\z,\y)(\x,\y)
\advance\x by 6
\advance\y by 6
\rput(\x,\y){{\bf #3}}}
\font\co=lcircle10
\def\jr{\rotatedown{\smash{\raise2pt\hbox{\co \rlap{\rlap{\char'005} \char'007}}
               \raise6pt\hbox{\rlap{\vrule height6.5pt}}
                \raise2pt\hbox{\rlap{\hskip4pt \vrule
          height0.4pt depth0pt
                width7.7pt}}}}}
\def\textcross{\ \smash{\lower4pt\hbox{\rlap{\hskip4.15pt\vrule height14pt}}
                \raise2.8pt\hbox{\rlap{\hskip-3pt \vrule height.4pt depth0pt
                        width14.7pt}}}\hskip12.7pt}
\def\textelbow{\ \hskip.1pt\smash{\raise2.75pt%
                \hbox{\co \hskip 4.15pt\rlap{\rlap{\char'004} \char'006}
                \lower6.8pt\rlap{\vrule height3.5pt}
                \raise3.6pt\rlap{\vrule height3.5pt}}
                \raise2.8pt\hbox{%
                  \rlap{\hskip-7.15pt \vrule height.4pt depth0pt
width3.5pt}%
                  \rlap{\hskip4.05pt \vrule height.4pt depth0pt
width3.5pt}}}
                \hskip8.7pt}
\newtheorem{theorem}{Theorem}[section]
\newtheorem{proposition}[theorem]{Proposition}
\newtheorem{lemma}[theorem]{Lemma}
\newtheorem{corollary}[theorem]{Corollary}
\newtheorem{remark}[theorem]{Remark}
\newtheorem{definition}[theorem]{Definition}
\numberwithin{equation}{section}
\def\interior{\mathrm{Int}}
\def\Carrier{\mathrm{Carrier}}
\newcommand{\Z}{\mathbb Z}
\newcommand{\To}{\longrightarrow}
\newcommand{\R}{\mathbb R}
\newcommand{\N}{\mathbb N}
\newcommand{\C}{\mathbb C}
\newcommand{\PPP}{\mathbb{P}}
\newcommand{\PP}{\mathcal{P}}
\newcommand{\K}{\mathcal{K}}
\newcommand{\F}{\mathcal{F}}
\newcommand{\B}{\mathcal{B}}
\newcommand{\I}{\mathcal{I}}
\newcommand{\E}{\mathcal{E}}
\newcommand{\Q}{\mathcal{Q}}
\DeclareMathOperator{\bd}{bd}
\DeclareMathOperator{\rank}{rank}
\DeclareMathOperator{\Hom}{Hom}
\DeclareMathOperator{\Sl}{SL}
\DeclareMathOperator{\Last}{Last}
\newcommand{\thmrefer}[1]{\renewcommand\thetheorem
  {\protect\ref{#1}}\addtocounter{theorem}{-1}}
\begin{document}

\begin{abstract}
In a seminal 1994 paper \cite{Lusztig3}, 
Lusztig extended the theory of total positivity by introducing
the totally non-negative part $(G/P)_{\geq 0}$ of an arbitrary 
(generalized, partial) flag variety
$G/P$.  He referred to this space as a ``remarkable polyhedral subspace,"
and conjectured a decomposition into cells,
which was subsequently 
proven by the first author \cite{Rietsch1}.  In \cite{Wil} the second
author made the concrete conjecture that this cell decomposed 
space is the next best thing to a polyhedron, by conjecturing 
it to be a {\it regular} 
CW complex that is homeomorphic to a closed ball.
 In this article we use discrete Morse theory to 
prove this conjecture up to homotopy-equivalence. 
Explicitly, we prove that the  
boundaries of the cells are homotopic to spheres, and
the closures of cells are 
contractible.  
The latter part generalizes a result of Lusztig's \cite{Lusztig1}, 
that $(G/P)_{\geq 0}$ -- the closure
of the top-dimensional cell -- is contractible.
Concerning our result on the boundaries of cells, even the special case
that the boundary of the top-dimensional cell 
$(G/P)_{> 0}$ is homotopic to a sphere, is new for all
$G/P$ other than projective space.  
\end{abstract}

\maketitle

\setcounter{tocdepth}{1}
\tableofcontents

\section{Introduction}
The classical theory of total positivity studies matrices whose
minors are all positive.  Lusztig dramatically 
generalized this theory with 
a 1994 paper \cite{Lusztig3} in which he introduced
the totally positive part of a reductive group $G$
(totally positive matrices are recovered when
$G$ is a general linear group). Lusztig also defined 
the (totally) positive 
and (totally) non-negative parts
$(G/P)_{>0}$ and $(G/P)_{\geq 0}$  
of an arbitrary (generalized, partial) flag variety
$G/P$.
Lusztig referred to $(G/P)_{\geq 0}$ as a ``remarkable polyhedral 
subspace"
\cite{Lusztig3}, 
and conjectured a decomposition into cells,
which was subsequently 
validated by the first author \cite{Rietsch1}.  This cell
decomposition has a unique top-dimensional cell, the totally
positive part $(G/P)_{> 0}$;  the totally non-negative part
$(G/P)_{\geq 0}$ is the closure of this cell.

Lusztig \cite{Lusztig1} has proved that the totally nonnegative part of the
(full) flag variety is contractible, which implies the same result
for any partial flag variety.  
More generally, in 1996 Lusztig asked whether
the closure of each cell
of $(G/P)_{\geq 0}$ is contractible \cite{LusztigQuestion}, but 
this problem has remained
open until now.  By analogy with toric varieties,
one might wonder whether even more is true --
whether  
$(G/P)_{\geq 0}$ 
is 
homeomorphic to a ball, and in that case, 
whether there is a homeomorphism 
to a polyhedron mapping cells
to faces:
indeed, there is a notion of total positivity for toric varieties,
and the non-negative part of a toric variety is homeomorphic -- via the 
moment map -- to its moment polytope \cite{Fulton}.
It turns out that $(G/P)_{\geq 0}$ cannot be modeled by a polyhedron in the above 
sense: for example, the totally non-negative part of
the Grassmannian $Gr_{2,4}(\R)$ has one top-dimensional
cell of dimension $4$ and four $3$-dimensional cells, but
there is no $4$-dimensional polytope with four facets.
Nevertheless, in \cite{Wil} the second author conjectured that 
 $(G/P)_{\geq 0}$ together with its cell decomposition is the next best thing to a polyhedron,
that is, it is a {\it regular} CW complex -- the closure of each cell 
is homeomorphic to a closed ball and the boundary of each cell is homeomorphic to 
a sphere.

The goal of this paper is to apply combinatorial and 
topological methods 
in order to address this conjecture.
Indeed, the past thirty years
have seen a wealth of literature
designed to
facilitate the interplay between combinatorics and geometry
(see \cite{DK}, \cite{B1}, \cite{BW1}, \cite{BW2}, \cite{B2}).
In particular,
in a 1984 paper \cite{B2}, Bjorner recognized that regular CW complexes
are combinatorial objects in the following sense:
if $Q$ is the poset of closed cells in a regular CW decomposition of
a space $X$, then the order complex (or nerve) $\Vert Q \Vert$
is homeomorphic to $X$.  Furthermore, he gave criteria \cite{B2}
for recognizing
when a poset is the face poset of a regular CW complex: for example,
if a poset is {\it thin} and {\it shellable} then it is the face
poset of some regular CW complex homeomorphic to a ball.

In \cite{Rietsch2}, the first author described the poset $Q$ of closed 
cells of $(G/P)_{\geq 0}$, and in \cite{Wil}, 
the second author applied techniques from poset topology 
to the poset of closed cells of $(G/P)_{\geq 0}$.  In particular,
she showed that the poset is thin and shellable.
It follows that the order complex $\Vert Q \Vert$ is homeomorphic 
to a ball, and by Bjorner's
results, $Q$ is the poset of cells of a regular CW decomposition of a ball.
These results were the motivation for her conjecture
that the cell decomposition of $(G/P)_{\geq 0}$ is a regular
CW decomposition of a ball.  

While the statement that $Q$ is the poset of cells of a regular 
CW decomposition of a ball is an extremely strong combinatorial result, 
one cannot use it to deduce any corresponding topological consequences 
for the original space $(G/P)_{\geq 0}$.  Even to deduce results about the
Euler characteristics of closures of cells requires further topological
information about how cells are glued together, i.e. knowing that the 
cell complex is a {\it CW complex}. This was proved some ten years after 
the discovery of the cell decomposition, in \cite{PSW, RW}. 

To obtain new topological information 
about the  CW complex $(G/P)_{\geq 0}$, we turn in this paper to 
another technique, namely  Forman's discrete Morse theory \cite{RF}.
The main theorem of discrete Morse theory is set up to
provide a sequence of collapses for cells in a CW complex, which
preserves the homotopy-type of the CW complex.  To use
it, one must input some combinatorial data --  a {\it discrete Morse function}, which
specifies the sequence of collapses -- and  check a number of topological 
hypotheses.  Most notably, one must make sure that whenever one cell $C_1$
is collapsed into a cell $C_2$ whose closure contains $C_1$, $C_1$ is a {\it regular face}
of $C_2$.

In this paper we use a blend of combinatorial and topological arguments
to apply discrete Morse theory to $(G/P)_{\geq 0}$.
Our main result is the following.
\begin{theorem}\label{main}
Let $(G/P)_{\geq 0}$ be an arbitary (generalized, partial) flag variety.  
The closure of each cell of $(G/P)_{\geq 0}$ is collapsible, hence
contractible.  Furthermore, the boundary of each cell is homotopy-equivalent
to a sphere.  In particular, 
$(G/P)_{\geq 0}$ is contractible and its boundary
is homotopy-equivalent to a sphere.
\end{theorem}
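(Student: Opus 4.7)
The plan is to prove both halves of the theorem in parallel by applying Forman's discrete Morse theory \cite{RF} directly to the CW structure on $(G/P)_{\geq 0}$ guaranteed by \cite{PSW,RW}. The starting combinatorial input is the EL-shellability and thinness of the cell poset $Q$ established in \cite{Wil}. A now-standard construction (due to Chari) converts an EL-shelling of a closed interval $[\hat 0,\sigma]$ in a bounded poset into an acyclic matching on the Hasse diagram whose only unmatched element is the maximum $\sigma$; and applied instead to the open interval $(\hat 0,\sigma)$ it produces an acyclic matching with exactly two critical elements, a minimum and one codimension-one facet of $\sigma$. These matchings are exactly the combinatorial data one needs in order to write down a discrete Morse function on $\overline{\sigma}$ (respectively on $\partial\sigma$).

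Granting for a moment the topological hypotheses of Forman's theorem, the conclusions are immediate: the matching on $[\hat 0,\sigma]$ exhibits $\overline\sigma$ as a sequence of elementary collapses terminating at a single vertex, so $\overline\sigma$ is collapsible and hence contractible; the matching on $(\hat 0,\sigma)$ reduces $\partial\sigma$ to a CW complex with one $0$-cell and one $(\dim\sigma -1)$-cell, which is manifestly homotopy-equivalent to $S^{\dim\sigma-1}$. Applied to the unique top cell $(G/P)_{>0}$ this yields the ``in particular'' clause.

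The decisive obstacle, and the step where most of the work will lie, is verifying the hypothesis of Forman's theorem that every matched pair $(\rho,\tau)$ is a \emph{regular} face relation: the attaching map of $\tau$ must restrict to a homeomorphism of a closed hemisphere onto $\overline\rho$. Regularity of the entire CW complex is precisely what remains conjectural, so this cannot be assumed a priori, and each pair arising from the matching above must be checked by hand. My plan is to combine the parametrizations of the cells of $(G/P)_{\geq 0}$ by distinguished subexpressions from \cite{Rietsch1,Rietsch2} with the explicit combinatorial description of the shelling in \cite{Wil} in terms of reflection orders, so that each matched pair $(\rho,\tau)$ can be exhibited in a standard product chart along the codimension-one degeneration from $\tau$ to $\rho$. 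From there, regularity of that single face relation will reduce to a local check in the parametrization.

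If this local regularity can be established uniformly across all matched pairs produced by the shelling, the remainder of the argument is mechanical: feed the matching into Forman's theorem, read off collapsibility of $\overline\sigma$ and the Morse-theoretic homotopy-sphere presentation of $\partial\sigma$, and conclude. I expect the subtlety to be that the shelling order from \cite{Wil} is defined combinatorially via reflection orders on Weyl group elements, whereas regularity is a geometric statement about Lusztig's parametrizations; reconciling these two points of view on a single matched pair is the step I anticipate as most delicate, and I would organize the proof so that this reconciliation only needs to be carried out for codimension-one incidences rather than for arbitrary intervals in $Q$.
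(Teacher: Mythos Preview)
Your overall strategy---discrete Morse theory on the CW structure from \cite{RW}, with regularity of matched pairs as the crux---is exactly the paper's, and you have correctly located the difficulty. The gap is in how the matching is built. You propose to feed the EL-shelling of $Q^J$ from \cite{Wil} directly into Chari's construction. The paper does \emph{not} do this: it partitions the face poset of $\overline{\mathcal P^J_{x,u,w;>0}}$ into fibers $S^J_{xu^{-1}}(y)=\{\mathcal P^J_{x',u',y;>0}: xu^{-1}\le x'u'^{-1}\le y\}$ over the third coordinate $y$, notes that each fiber is (dual to) a Bruhat interval, and applies Chari separately to Dyer's EL-labeling of each interval, with a reflection order tailored to a fixed reduced word for $y$. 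The union of these fiberwise matchings is then shown to be globally acyclic.

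This detour is forced by the regularity criterion. The paper proves regularity (Theorem~\ref{regularity}) only for ``good'' edges---those with the \emph{same} $w$, where the positive subexpressions $\mathbf{xu^{-1}_+}$ and $\mathbf{x'u'^{-1}_+}$ differ by inserting a single position at the rightmost available slot---and the fiberwise matching is engineered so that every matched edge is good (Proposition~\ref{smallMorse}, which in turn relies on a nontrivial lemma about positive subexpressions, Proposition~\ref{prop:reduced}). By contrast, the matching coming from the global \cite{Wil} shelling will in general include Type~2 edges in the sense of Lemma~\ref{lem:cover}, where $w$ drops. The regularity argument (Propositions~\ref{almostregularface}--\ref{step3}) is an intricate analysis of one coordinate tending to infinity in the canonical-basis parametrization, and it genuinely depends on the good-edge hypothesis; there is no extension offered for edges where $w$ changes. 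So your plan would require either showing that the \cite{Wil} shelling happens to match only good edges (not claimed, and unlikely) or extending the regularity proof to Type~2 edges (a substantially harder problem). The paper's key insight is to design the matching to fit the regularity criterion one can actually prove, rather than to attempt regularity for whatever the generic shelling hands you.

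(Minor slip: Chari's matching on the closure leaves a \emph{vertex} critical, not the maximum $\sigma$; you state this correctly two sentences later when you speak of collapses terminating at a single vertex.)
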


While it was known already that $(G/P)_{\geq 0}$ is contractible by work of Lusztig,
this theorem also identifies the homotopy type of its boundary, and of the 
closures of the smaller cells and their boundaries. Namely, we prove the 
conjecture that $(G/P)_{\geq 0}$ is a regular CW 
decomposition of a ball up to 
homotopy-equivalence.

We note that much of the technical difficulty of proving our
main results stems from the fact that the attaching maps
that we constructed for cells in \cite{RW} are defined 
in a non-explicit way in terms of Lusztig's canonical basis.
Identifying enough pairs of cells $(C_1,C_2)$ with
$C_1$ a provably regular face of $C_2$, and then demonstrating
regularity, requires an intricate analysis of parameterizations of 
cells and of what happens when parameters go to infinity.
Our arguments rely in a fundamental way 
on positivity properties of the canonical basis.

The combinatorial component of our arguments is also nontrivial.
For every cell $C$ in $(G/P)_{\geq 0}$, we find a {\it Morse matching}
on the poset of cells in the closure of $C$, with a unique critical 
cell of dimension $0$, such that matched pairs of cells are regular.
This requires us to identify appropriate Morse matchings of intervals
in Bruhat order; the matchings we construct generalize certain
{\it special matchings} found by Brenti \cite{Brenti} in the context of 
Kazhdan-Lusztig theory.
An essential tool in our proofs is Dyer's notion of reflection orders
and his EL-labeling
of Bruhat order \cite{Dyer}.
Along the way, we give a link between poset topology and discrete Morse
theory, building on work of Chari to provide an 
algorithm for passing explicitly from an EL-labeling of a CW poset
to a Morse matching.

For the time being, there is no simple strategy 
for proving that closures of cells are homeomorphic to balls.
One might hope to use recent work of  Hersh \cite{Hersh}
on determining when an attaching map 
for a CW complex is a homeomorphism on its entire domain.
However, there is only one known CW structure for $(G/P)_{\geq 0}$
(the one we gave in \cite{RW}), and its attaching maps are not 
homeomorphisms.

It is worth noting that to our knowledge this paper represents
one of the first instances of the application of 
combinatorial tools (poset topology and discrete Morse theory)
to a topological space which is not a simplicial or regular cell complex,
and which arose outside the context of combinatorics.
Indeed, most 
of the tools of poset topology are designed to analyze the 
order complex of a poset (a simplicial complex), e.g.
the order complex of Bruhat order, 
the partition lattice, the lattice of subgroups of a finite group
\cite{Wachs}.
Similarly, discrete Morse theory is most readily applied to
simplicial complexes and regular CW complexes
(as opposed to general CW complexes), because in
these situations one does not have to check extra topological
hypotheses before collapsing cells.
In light of this, it is not surprising that virtually all of 
the many applications of discrete Morse theory to date have been to
simplicial complexes,
e.g.\ complexes of 
$t$-colorable graphs, complexes of connected and biconnected graphs,
complexes of not $i$-connected graphs; 
see \cite{Forman2} for an interesting survey.  

Therefore we hope that this paper will be valuable not only
in shedding light on the topology of $(G/P)_{\geq 0}$, but also 
in demonstrating the applicability of combinatorial tools 
to topological spaces outside the world of combinatorics.

\textsc{Acknowledgements:}
L.W.: It is a pleasure to thank Anders Bjorner
for stimulating conversations about poset topology 
and discrete Morse theory
at the Mittag-Leffler Institute in May 2005.
I would also like to thank
Sergey Fomin, 
Arun Ram, and David Speyer.
Finally, I am grateful to Tricia Hersh for useful feedback
and for drawing my attention to \cite{Kozlov}.

\section{Preliminaries on algebraic groups and flag varieties} 

We start with some preliminaries.

\subsection{Pinnings.}\label{Pin}
Let $G$ be a semisimple, simply connected linear algebraic group over $\C$ split over
$\R$, with split torus $T$.  We identify $G$ (and related spaces)
with their real points and consider them with their real topology.
Let $X(T) = \Hom(T, \R^*)$ and $\Phi \subset X(T)$ the set of roots.
Choose a system of positive roots $\Phi^+$.  We denote by $B^+$ the
Borel subgroup corresponding to $\Phi^+$ and by $U^+$ its unipotent
radical.  We also have the opposite Borel subgroup $B^-$ such that
$B^+ \cap B^- = T$, and its unipotent radical $U^-$.

Denote the set of simple roots by
$\Pi = \{\alpha_i \ \vline \ i \in I \} \subset \Phi^+$.
For each $\alpha_i \in \Pi$ there is an associated homomorphism
$\phi_i : \Sl_2 \to G$.
Consider the $1$-parameter subgroups in $G$ (landing in $U^+, U^-$,
and $T$, respectively) defined by
\begin{equation*}
x_i(m) = \phi_i \left(
                   \begin{array}{cc}
                     1 & m \\ 0 & 1\\
                   \end{array} \right) ,\
y_i(m) = \phi_i \left(
                   \begin{array}{cc}
                     1 & 0 \\ m & 1\\
                   \end{array} \right) ,\
\alpha_i^{\vee}(t) = \phi_i \left(
                   \begin{array}{cc}
                     t & 0 \\ 0 & t^{-1}\\
                   \end{array} \right) ,
\end{equation*}
where $m \in \R, t \in \R^*, i \in I$.
The datum $(T, B^+, B^-, x_i, y_i; i \in I)$ for $G$ is
called a {\it pinning}.  The standard pinning for
$\Sl_d$ consists of the diagonal, upper-triangular, and lower-triangular
matrices, along with the simple root subgroups
$x_i (m) = I_d + mE_{i,i+1}$ and
$y_i (m) = I_d + mE_{i+1,i}$ where
$I_d$ is the identity matrix and $E_{i,j}$ has a $1$ in
position $(i,j)$ and zeroes elsewhere.

\subsection{Folding}\label{s:automorphism} If $G$ is not simply laced,
 then one can
    construct a simply laced group $\dot G$ and an automorphism
     $\tau$ of $\dot G$ defined over $\R$, such that there is
  an isomorphism, also defined over $\R$, between $G$ and
    the fixed point subset $\dot G^\tau$ of $\dot G$. Moreover
     the groups $G$ and $\dot G$ have compatible pinnings.  Explicitly
     we have the following.
	 
   Let $\dot G$ be simply connected and simply laced.
    We apply the same notations as in Section \ref{Pin} for $G$, but with a dot,
		     to our simply laced group $\dot G$. So we have a pinning
			  $(\dot T,\dot B^+,\dot B^-, \dot x_i,\dot y_i, i\in \dot I)$ of $\dot G$,
		   and $\dot I$ may be identified with the vertex set of the Dynkin
	    diagram of $\dot G$.
	    
	  Let $\sigma$ be a permutation of $\dot I$ preserving
	   connected components of the Dynkin diagram, such that,
	    if $j$ and $j'$ lie in  the same orbit under $\sigma$
	     then they are {\it not} connected by an edge.
		  Then $\sigma$ determines an automorphism $\tau$ of $\dot G$
		   such that
    \begin{enumerate}
	     \item
		  $\tau(\dot T)=\dot T$,
		   \item
		    $\tau(x_i(m))=x_{\sigma(i)}(m)$ and $\tau(y_i(m))=y_{\sigma(i)}(m)$ for all
	     $i\in \dot I$ and $m\in \R$.
	  \end{enumerate}
			   In particular $\tau$ also preserves $\dot B^+,\dot B^-$. Let $\bar I$ denote
	    the set of $\sigma$-orbits in $\dot I$, and for $\bar i\in \bar I$, let
	     \begin{eqnarray*}
		  x_{\bar i}(m)&:=&\prod_{i\in \bar i}\ x_i(m),\\
	   y_{\bar i}(m)&:=&\prod_{i\in \bar i}\ y_i(m).
    \end{eqnarray*}
The fixed
  point group $\dot G^{\tau}$ is a simply
  laced, simply connected algebraic group with  pinning
  $({\dot T}^\tau,\dot B^{+ \tau},\dot B^{-\, \tau},  x_{\bar i}, y_{\bar i},      \bar i\in \bar I)$.
  There exists, and we choose, $\dot G$ and $\tau$ such that  $\dot G^{\tau}$  
  is isomorphic to our group $G$ via an isomorphism compatible 
with the pinnings.

\subsection{Flag varieties}
The Weyl group $W = N_G(T) / T$ acts on $X(T)$ permuting the roots
$\Phi$.  
We set $s_i:= \dot{s_i} T$ where
$\dot{s_i} :=
                 \phi_i \left(
                   \begin{array}{cc}
                     0 & -1 \\ 1 & 0\\
                   \end{array} \right).$
Then any $w \in W$ can be expressed as a product
$w = s_{i_1} s_{i_2} \dots s_{i_m}$ with $\ell(w)$ factors.  
This gives $W$ the structure of a Coxeter group; we will 
assume some 
basic knowledge 
of Coxeter systems and Bruhat order as in 
\cite{Humphreys}. 
We set
$\dot{w} = \dot{s}_{i_1} \dot{s}_{i_2} \dots \dot{s}_{i_m}$.
It is known that $\dot w$ is independent of the reduced expression 
chosen.

We can identify the flag variety $G/B$ with the variety 
$\B$ of Borel subgroups, via
\begin{equation*}
gB^+ \Longleftrightarrow g \cdot B^+ := gB^+ g^{-1}.
\end{equation*}
We have the Bruhat decompositions
\begin{equation*}
\B = \sqcup_{w \in W} B^+ \dot{w} \cdot B^+ =
    \sqcup_{w \in W} B^- \dot{w} \cdot B^+
\end{equation*}
of $\B$ into $B^+$-orbits called {\it Bruhat cells},
and $B^-$-orbits called {\it opposite Bruhat cells}.

\begin{definition}
For $v, w \in W$ define
\begin{equation*}
\mathcal R_{v,w}: = B^+ \dot{w} \cdot B^+ \cap B^- \dot{v} \cdot B^+.
\end{equation*}
\end{definition}
The intersection $\mathcal R_{v,w}$ is non-empty precisely if $v \leq w$
in the Bruhat order,
and in that case is irreducible of dimension $\ell(w) - \ell(v)$,
see \cite{KL}.

Let $J \subset I$.  The parabolic subgroup
$W_J \subset W$
corresponds to a parabolic subgroup $P_J$ in $G$
containing $B^+$.  Namely,
$P_J = \sqcup_{w \in W_J} B^+ \dot{w} B^+$.
Consider the variety $\PP^J$ of all parabolic subgroups of $G$
conjugate to $P_J$.  This variety can be identified with the partial
flag variety $G/P_J$ via
\begin{equation*}
gP_J \Longleftrightarrow gP_J g^{-1}.
\end{equation*}
We have the usual projection from the full flag variety to
a partial flag variety which takes the form
$\pi = \pi^J: \B \to \PP^J$, where
$\pi(B)$ is the unique parabolic subgroup of type $J$ containing
$B$.

\section{Total positivity for flag varieties}\label{Sec:TotPos}

\subsection{The totally non-negative part of $G/P$ and its cell decomposition}

\begin{definition} \cite{Lusztig3}
The totally non-negative
part $U_{\geq 0}^-$ of $U^-$ is defined to be the semigroup in
$U^-$ generated by the $y_i(t)$ for $t \in \R_{\geq 0}$.

The
totally non-negative part of $\B$ (denoted by $\B_{\geq 0}$ or
by $(G/B)_{\geq 0}$) is defined by
\begin{equation*}
\B_{\geq 0} := \overline{ \{u \cdot B^+ \ \vline \ u \in U_{\geq 0}^- \} },
\end{equation*}
where the closure is taken inside $\B$ in its real topology.

The totally non-negative
part of a partial flag variety
$\PP^J$ (denoted by $\PP^J_{\geq 0}$ or by $(G/P_J)_{\geq 0}$)
 is defined to be
$\pi^J (\B_{\geq 0})$.
\end{definition}

Lusztig \cite{Lusztig3, Lusztig2} introduced natural decompositions of $\B_{\geq 0}$ and
$\PP^J_{\geq 0}$.

\begin{definition}  \cite{Lusztig3}
For $v, w \in W$ with $v \leq w$, let
\begin{equation*}
\mathcal R_{v,w ; >0} := \mathcal R_{v,w} \cap \B_{\geq 0}.
\end{equation*}
\end{definition}

We write $W^J$ (respectively $W^J_{max}$) for the set of minimal
(respectively maximal) length coset representatives of $W/W_J$.

\begin{definition} \cite{Lusztig2} \label{index}
Let $\I^J \subset W^J_{max} \times W_J \times W^J$ be the set of
triples $(x,u,w)$ with the property that $x \leq wu$.
Given $(x,u,w) \in \I^J$,
we define $\mathcal P_{x,u,w; >0}^J := \pi^J(\mathcal R_{x,wu; >0}) = \pi^J(\mathcal R_{xu^{-1},w; >0}).$
\end{definition}

The first author \cite{Rietsch1} proved that
$\mathcal R_{v,w; >0}$ and
$\mathcal P_{x,u,w; >0}^J$ are semi-algebraic cells of dimension
$\ell(w)-\ell(v)$ and $\ell(wu) - \ell(x)$, respectively.

\subsection{Parameterizations of cells}\label{s:param}

In \cite{MarRie:ansatz}, Marsh and the first author gave parameterizations
of the cells $\mathcal R_{v,w; >0}$,  which we now explain.

Let $v\le w$ and let $\mathbf w=(i_1,\dotsc, i_m)$ encode a
reduced expression $s_{i_1}\dotsc s_{i_m}$ for $w$. Then there
exists a unique subexpression $s_{i_{j_1}}\dotsc s_{i_{j_k}}$
for $v$ in $\mathbf w$ with the
property that, for $l=1,\dotsc,k$,
\begin{equation*}
s_{i_{j_1}}\dotsc s_{i_{j_{l}}}s_{i_r}>
s_{i_{j_1}}\dotsc s_{i_{j_{l}}}  \quad \text{whenever $j_l< r\le j_{l+1}$,}
\end{equation*}
where $j_{k+1}:=m$. This is the ``rightmost reduced subexpression" for $v$ in
$\mathbf w$, and is called the  ``positive  
subexpression" in \cite{MarRie:ansatz}. It was originally
introduced by Deodhar~\cite{Deodhar}.  We
use the notation
\begin{eqnarray*}
\mathbf v_+&:=&\{j_1,\dotsc, j_k\},\\
\mathbf v_+^c&:=&\{1,\dotsc, m\}\setminus \{j_1,j_2,\dotsc, j_k\},
\end{eqnarray*}
for this special subexpression for $v$ in $\mathbf w$. Note that this notation 
only makes sense in the context of a fixed $\mathbf w$.

Now we can define the map
\begin{eqnarray*}
\phi_{\mathbf v_+,\mathbf w}:(\C^*)^{\mathbf v^c_+} &\to& \mathcal R_{v,w},\\
        ( t_r)_{r\in\mathbf v^c_+}&\mapsto &g_1\dotsc g_m\cdot B^+,
\end{eqnarray*}
where
\begin{equation*}
g_r=\begin{cases}
\dot s_{i_r}, &\text{ if $r\in\mathbf v_+$,}\\
y_{i_r}(t_r) & \text{ if $r\in \mathbf v^c_+$.}
\end{cases}
\end{equation*}

\begin{theorem}\cite[Theorem~11.3]{MarRie:ansatz}\label{t:param}
The restriction of $\phi_{\mathbf v_+,\mathbf w}$ to $(\R_{>0})^{\mathbf v_+^c}$
defines an isomorphism of semi-algebraic sets,
$$
\phi^{>0}_{\mathbf v_+,\mathbf w}: (\R_{>0})^{\mathbf v_+^c} \to \mathcal R_{v,w;>0}.
$$
\end{theorem}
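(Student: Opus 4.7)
The plan is to establish the theorem in three stages: reduce to the known complex/algebraic version of the parametrization, show the positive image lies in the non-negative cell via a limit argument, then prove surjectivity by a properness analysis.

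\emph{Stage 1 (reduction via the complex parametrization).} The map $\phi_{\mathbf v_+,\mathbf w}\colon (\C^*)^{\mathbf v_+^c}\to \mathcal R_{v,w}$ is an algebraic isomorphism: this follows from Deodhar's stratification of open Richardson varieties by distinguished subexpressions, together with the observation that the positive (rightmost) subexpression $\mathbf v_+$ labels the unique open Deodhar stratum, which fills out $\mathcal R_{v,w}$. Consequently, $\phi^{>0}_{\mathbf v_+,\mathbf w}$ is automatically injective and its inverse is automatically semi-algebraic (being the restriction of a regular algebraic inverse). The content of the theorem therefore reduces to identifying the image of $(\R_{>0})^{\mathbf v_+^c}$ with $\mathcal R_{v,w;>0}$.

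\emph{Stage 2 (the image lies in $\mathcal R_{v,w;>0}$).} I would exploit the identity in $\B$, valid for $a>0$,
\[
y_i(a)\cdot B^+ \;=\; x_i(1/a)\,\alpha_i^\vee(1/a)\,\dot s_i\cdot B^+ \;=\; x_i(1/a)\,\dot s_i\cdot B^+,
\]
whose right-hand side tends to $\dot s_i\cdot B^+$ as $a\to\infty$. Given positive parameters $(t_r)_{r\in\mathbf v_+^c}$, one replaces each factor $\dot s_{i_r}$ with $r\in\mathbf v_+$ by $y_{i_r}(s_r)$ and lets $s_r\to\infty$; Chevalley commutation relations between $x$-factors and subsequent $y$-factors, combined with conjugation by elements of the positive torus, allow one to sweep the surplus $x_{i_r}(1/s_r)$ factors rightward into $B^+$ while preserving positivity of every $y$-parameter. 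Hence $\phi_{\mathbf v_+,\mathbf w}(t)$ is exhibited as a limit of points $u\cdot B^+$ with $u\in U^-_{\geq 0}$, so it lies in $\B_{\geq 0}\cap \mathcal R_{v,w} = \mathcal R_{v,w;>0}$.

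\emph{Stage 3 (surjectivity — the main obstacle).} Fix $p\in \mathcal R_{v,w;>0}$. By Stage 1 there is a unique $t\in(\C^*)^{\mathbf v_+^c}$ with $\phi_{\mathbf v_+,\mathbf w}(t)=p$, and one must show $t\in(\R_{>0})^{\mathbf v_+^c}$. The image $\phi^{>0}_{\mathbf v_+,\mathbf w}((\R_{>0})^{\mathbf v_+^c})$ is a nonempty open connected semi-algebraic subset of the connected semi-algebraic cell $\mathcal R_{v,w;>0}$ of equal real dimension $\ell(w)-\ell(v)$, so it suffices to prove that it is also closed in $\mathcal R_{v,w;>0}$. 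For this, one analyses what happens when a coordinate $t_r$ of a sequence $t^{(n)}\in(\R_{>0})^{\mathbf v_+^c}$ with $\phi(t^{(n)})\to p$ tends to $0$ or to $\infty$: either degeneration should force $p$ into a strictly lower-dimensional Deodhar stratum of $\overline{\mathcal R_{v,w}}$ — as $t_r\to 0$ collapses a $y$-factor, while $t_r\to \infty$ converts a $y$-factor into an extra $\dot s$-factor by the identity of Stage 2, altering the distinguished subexpression — which contradicts $p\in \mathcal R_{v,w;>0}\subset \mathcal R_{v,w}$.

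The principal difficulty is this final degeneration analysis. Making it precise requires an induction on $\ell(w)$ that, for each possible escape pattern of the parameters, uses the exchange identity of Stage 2 together with Chevalley commutation to rewrite the limit in terms of a genuinely shorter distinguished subexpression for some $v'\le w'$ with $(v',w')\ne (v,w)$, placing the limit in a strictly smaller stratum and thereby precluding the escape. Once this combinatorial bookkeeping is in hand, the theorem follows from Stages 1--2.
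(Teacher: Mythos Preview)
The paper does not prove this theorem; it is quoted verbatim from \cite[Theorem~11.3]{MarRie:ansatz} and used as a black box. So there is no proof in this paper to compare your proposal against.

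That said, your outline has a genuine error and two substantial gaps. In Stage~1 you assert that $\phi_{\mathbf v_+,\mathbf w}$ is an algebraic isomorphism onto $\mathcal R_{v,w}$. This is false: Deodhar's decomposition stratifies $\mathcal R_{v,w}$ by \emph{all} distinguished subexpressions for $v$ in $\mathbf w$, and the positive one gives only the open stratum, which is dense but typically proper. (You seem to sense this, since you immediately switch in Stage~3 to an open-and-closed argument that does not require it; but the first sentence of Stage~3, invoking a preimage $t\in(\C^*)^{\mathbf v_+^c}$ for an arbitrary $p\in\mathcal R_{v,w;>0}$, is unjustified as stated.) Your open-and-closed argument also silently assumes that $\mathcal R_{v,w;>0}$ is connected, which is exactly what the parametrization is meant to establish; you would need to import connectedness from an independent source, e.g.\ Rietsch's thesis \cite{Rietsch1}.

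The more serious issue is that Stages~2 and~3 are sketches of the actual work, not proofs. In Stage~2 the claim that one can ``sweep the surplus $x_{i_r}(1/s_r)$ factors rightward into $B^+$ while preserving positivity of every $y$-parameter'' is precisely where the positive-subexpression hypothesis is used, and it is delicate: commuting an $x_i$ past a $\dot s_j$ or a $y_j$ can produce denominators, and one must check that the combinatorics of $\mathbf v_+$ prevent these from ever becoming negative (cf.\ the remark after case~(2) in Section~\ref{s:coord} of the paper, which invokes \cite[Lemma~11.4]{MarRie:ansatz} for exactly this point). In Stage~3 you correctly identify the degeneration analysis as the crux, but ``altering the distinguished subexpression'' when a parameter escapes is not automatic: a $y$-factor degenerating to $\dot s$ at position $r$ need not produce a \emph{distinguished} subexpression, and the induction you propose must track this carefully. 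These are the two places where the argument in \cite{MarRie:ansatz} does real work, and your proposal does not yet supply it.
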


We note that this parameterization generalizes Lusztig's parametrization of 
totally nonnegative cells in $U^-_{\ge 0}$ from \cite{Lusztig3}. Namely 
  $
U^-_{\ge 0}=\bigsqcup_{w\in W} U^-_{>0}(w),
  $
for
$$
   U^-_{>0}(w):=\{y_{i_1}(t_1)y_{i_2}(t_2)\dots y_{i_m}(t_m) \ | \ t_i\in \R_{>0}\},
$$
where $\mathbf w=(i_1,\dotsc, i_m)$ is a/any reduced expression of $w$. Clearly,
$
\mathcal R^{>0}_{1,w}=U^-_{>0}(w)\cdot B^+.
$  

\subsection{Change of coordinates under braid relations}\label{s:coord} 
In the simply laced case
there is a simple change of coordinates \cite{Lusztig3, Rie:MSgen}
which describes how
two parameterizations of the same cell are related when considering
two reduced expressions which differ by a commuting relation or
a braid relation.

If $s_i s_j = s_j s_i$ then $y_i(a) y_j(b)=y_j(b) y_i(a)$
   and $y_i(a) \dot s_j = \dot s_j y_i(a)$.

If $s_i s_j s_i = s_j s_i s_j$ then
\begin{enumerate}
\item
$y_i(a) y_j(b) y_i(c) = y_j\left(\frac{bc}{a+c}\right) y_i(a+c) y_j\left(\frac{ab}{a+c}\right)$,
\item $y_i(a) \dot s_j y_i(b) = y_j\left(\frac ba\right) y_i(a) \dot s_j x_j\left(\frac ba\right)$,
\item
    $\dot s_j \dot s_i y_j(a)=
    y_i(a) \dot s_j \dot s_i$.
\end{enumerate}
In case (2) Lemma 11.4 from \cite{MarRie:ansatz} implies that the factor 
$x_j(\frac ba)$ disappears into $B^+$ without affecting the remaining
parameters when this braid relation is applied in the parametrization
of a totally nonnegative cell.

The changes of coordinates have also
been computed for more general braid relations
and have been observed to be invertible, subtraction-free, homogeneous
rational transformations \cite{BZ,Rie:MSgen}.

\subsection{Total positivity and canonical bases for simply laced $G$}\label{s:CanonBasis}

\bigskip

Assume that $G$ is simply laced.  Let $\bf U$ be the enveloping
algebra of the Lie algebra of $G$; this can be defined by generators
$e_i, h_i, f_i$ ($i\in I$) and the Serre relations.  For any
dominant weight
$\lambda\in X(T)$ embedded into $\mathfrak h^*$, 
there is a finite-dimensional simple $\bf U$-module $V(\lambda)$
with a non-zero vector $\eta$ such that $e_i\cdot \eta = 0$
and $h_i \cdot\eta = \lambda(h_i) \eta$ for all $i \in I$.   
The
pair $(V(\lambda), \eta)$ is determined up to unique isomorphism.

There is a unique $G$-module structure on $V(\lambda)$ such that
for any $i\in I, a\in \R$ we have
\begin{equation*}
x_i(a) = \exp(a e_i):V(\lambda) \to V(\lambda), \qquad
y_i(a) = \exp(a f_i): V(\lambda) \to V(\lambda).
\end{equation*}
Then
$x_i(a)\cdot\eta = \eta$ for all $i\in I$, $a\in \R$, and
$t\cdot \eta = \lambda(t) \eta$ for all $t \in T$.  Let $\B(\lambda)$
be the canonical basis of $V(\lambda)$ that contains $\eta$
\cite{Lus:CanonBasis}.
We now collect some useful facts about the canonical basis.

\begin{lemma}\label{lem:positive2} \cite[1.7(a)]{Lusztig2}.
For any $w\in W$, the vector $\dot w\cdot \eta$ is the unique element of
$\B(\lambda)$ which lies in the extremal weight space
$V(\lambda)^{w(\lambda)}$.  In particular, $\dot w\cdot \eta \in \B(\lambda)$.
\end{lemma}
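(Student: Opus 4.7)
The strategy is to induct on the length $\ell(w)$, using compatibility of the canonical basis $\B(\lambda)$ with the Chevalley generators. The base case $w=e$ is immediate: $\dot e\cdot\eta = \eta$ lies in $\B(\lambda)$ by construction, and uniqueness holds because the highest weight space $V(\lambda)^{\lambda}$ is one-dimensional.

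For the inductive step, suppose $\ell(w)>0$ and choose $i\in I$ with $\ell(s_i w)<\ell(w)$. Set $w':=s_i w$, so that $\ell(w')=\ell(w)-1$ and $w=s_i w'$. Since $\dot w$ does not depend on the chosen reduced expression, $\dot w=\dot s_i\dot w'$, and therefore
$$\dot w\cdot \eta \ = \ \dot s_i\cdot(\dot w'\cdot \eta).$$
By the inductive hypothesis, $\dot w'\cdot\eta\in \B(\lambda)$ is the unique canonical basis element of weight $w'(\lambda)$. The assumption $\ell(s_i w')>\ell(w')$ forces $w'^{-1}(\alpha_i)>0$, so
$$a \ :=\ \langle w'(\lambda),\alpha_i^\vee\rangle \ =\ \langle \lambda, w'^{-1}(\alpha_i^\vee)\rangle \ \geq \ 0.$$
I would then work inside the $\phi_i(\Sl_2)$-subrepresentation generated by $\dot w'\cdot\eta$ and compute, via standard $\Sl_2$-theory applied to $\dot s_i = \phi_i\left(\begin{array}{cc} 0 & -1 \\ 1 & 0 \end{array}\right)$, the identity
$$\dot s_i\cdot(\dot w'\cdot\eta)\ =\ (-1)^a\,f_i^{(a)}\cdot(\dot w'\cdot\eta),$$
where $f_i^{(a)}=f_i^a/a!$ denotes the divided power.

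The final step invokes the positivity and integrality of the canonical basis (Lusztig): for any $b\in\B(\lambda)$ of weight $\mu$ with $a=\langle\mu,\alpha_i^\vee\rangle\geq 0$, the vector $f_i^{(a)}\cdot b$ expands as a non-negative integer combination of elements of $\B(\lambda)$, all of weight $s_i\mu$. Applied to $b=\dot w'\cdot\eta$, the target weight space $V(\lambda)^{s_i w'(\lambda)}=V(\lambda)^{w(\lambda)}$ is an extremal weight space and hence one-dimensional, forcing the sum to collapse to a single canonical basis vector. Combined with the sign computation, one concludes that $\dot w\cdot\eta\in\B(\lambda)$. Uniqueness then follows directly from the one-dimensionality of $V(\lambda)^{w(\lambda)}$.

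The main obstacle is not any single step but the careful tracking of signs and scalar normalizations: we need $\dot w\cdot\eta$ to lie in $\B(\lambda)$ on the nose, not merely in $\pm\B(\lambda)$, so the sign $(-1)^a$ above must interact correctly with the leading coefficient of $f_i^{(a)}\cdot b$ in the canonical basis expansion. The cleanest way to discharge this bookkeeping is to appeal directly to the foundational work of Lusztig on canonical bases, where precisely the compatibility of $\dot w$ with $\B(\lambda)$ on extremal weight spaces is established once and for all.
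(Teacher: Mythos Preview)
The paper does not supply its own proof of this lemma; it is stated with a citation to Lusztig \cite[1.7(a)]{Lusztig2} and no argument is given. So there is nothing in the paper to compare your proof against beyond the fact that the authors, like you in your final paragraph, simply defer to Lusztig.

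Your inductive strategy is in fact the standard route to this statement and is essentially how it is established in the literature. One correction, though: with the paper's convention $\dot s_i=\phi_i\left(\begin{smallmatrix}0&-1\\1&0\end{smallmatrix}\right)=\exp(-e_i)\exp(f_i)\exp(-e_i)$, the $\Sl_2$-computation for a vector $v$ with $e_i v=0$ and $h_i v=av$ yields $\dot s_i\cdot v=f_i^{(a)}v$ \emph{without} the sign $(-1)^a$. You can check this already in the $2$-dimensional representation: $\left(\begin{smallmatrix}0&-1\\1&0\end{smallmatrix}\right)\left(\begin{smallmatrix}1\\0\end{smallmatrix}\right)=\left(\begin{smallmatrix}0\\1\end{smallmatrix}\right)=f\cdot\left(\begin{smallmatrix}1\\0\end{smallmatrix}\right)$, where $a=1$, whereas your formula would predict a minus sign. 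This correction actually dissolves the obstacle you flag at the end: since $\dot w'\cdot\eta$ is an extremal weight vector, the $i$-string through it is an irreducible $\Sl_2$-module, so $f_i^{(a)}(\dot w'\cdot\eta)$ lands in the one-dimensional lowest weight space of that string, and your positivity-plus-one-dimensionality argument then shows it is a canonical basis element on the nose, with no sign to reconcile.
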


We define $f_i^{(p)}$ to be $\frac{f_i^p}{p!}$.  

\begin{lemma}\label{lem:canonbasis2}
Let $s_{i_1} \dots s_{i_n}$ be a reduced expression for $w\in W$.
Then there exists $a \in \N$ such that 
$f_{i_1}^{(a)} \dot s_{i_2} \dot s_{i_3} \dots \dot s_{i_n} \cdot \eta = 
\dot s_{i_1} \dot s_{i_2} \dots \dot s_{i_n}\cdot \eta$. Moreover,
$f_{i_1}^{(a+1)} \dot s_{i_2} \dot s_{i_3} \dots \dot s_{i_n}\cdot \eta = 0$.
\end{lemma}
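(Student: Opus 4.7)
The plan is to recognize $v := \dot{s}_{i_2}\dotsm\dot{s}_{i_n}\cdot\eta = \dot{w'}\cdot\eta$, where $w' := s_{i_2}\dotsm s_{i_n}$, as an $\mathfrak{sl}_{2,i_1}$-primitive vector, and then to read off $a$ as its $h_{i_1}$-weight. By Lemma~\ref{lem:positive2}, $v$ lies in the canonical basis $\B(\lambda)$ and has weight $\mu := w'(\lambda)$. Set $a := \langle \mu, \alpha_{i_1}^\vee\rangle = \langle \lambda, (w')^{-1}(\alpha_{i_1}^\vee)\rangle$. Reducedness of $s_{i_1}s_{i_2}\dotsm s_{i_n}$ forces $\ell(s_{i_1}w') > \ell(w')$, so $\beta := (w')^{-1}(\alpha_{i_1})$ is a positive root, and dominance of $\lambda$ then gives $a \in \N$.

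The key step is to prove that $e_{i_1} v = 0$. Were it nonzero, it would be a weight vector of weight $\mu + \alpha_{i_1} = w'(\lambda + \beta)$. However, $V(\lambda)$ is a highest-weight module with highest weight $\lambda$, so every weight $\nu$ of $V(\lambda)$ satisfies $\lambda - \nu \in \sum_{i\in I}\Z_{\geq 0}\,\alpha_i$; since $\beta$ is a nonzero positive root, $\lambda + \beta$ violates this and so is not a weight of $V(\lambda)$. By $W$-invariance of the weight set, neither is $w'(\lambda + \beta)$, forcing $e_{i_1} v = 0$.

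With $v$ an $\mathfrak{sl}_{2,i_1}$-primitive vector of $h_{i_1}$-weight $a$, the $\mathfrak{sl}_{2,i_1}$-submodule generated by $v$ is the standard irreducible representation of dimension $a+1$ with basis $v, f_{i_1}v, \dotsc, f_{i_1}^{(a)}v$; in particular $f_{i_1}^{(a+1)}v = 0$ is immediate. A short direct computation inside this irrep, using the factorization $\dot{s}_{i_1} = \exp(-e_{i_1})\exp(f_{i_1})\exp(-e_{i_1})$ coming from the pinning, gives the sign-free classical identity $\dot{s}_{i_1}\cdot v = f_{i_1}^{(a)}v$. Combining this with $\dot{s}_{i_1}\dot{w'} = \dot{w}$ (valid because $s_{i_1}w' = w$ is reduced from the given expression and $\dot w$ is independent of the reduced expression chosen) concludes the proof. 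The main obstacle is the second step, since it is the only point where the lemma uses nontrivial input beyond $\mathfrak{sl}_2$-representation theory generalities.
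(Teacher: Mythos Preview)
Your proof is correct. The paper's own proof is simply a one-line citation: it invokes Lemma~\ref{lem:positive2} and ``properties of the canonical basis,'' referring the reader to the proof of \cite[Proposition~28.1.4]{Lus:Quantum}. You instead give a self-contained argument that avoids any genuine canonical-basis input: the weight computation, the primitivity of $v$ via the observation that $\lambda+\beta$ is not a weight of $V(\lambda)$, and the explicit $\mathfrak{sl}_2$ computation of $\dot s_{i_1}\cdot v$ are all elementary highest-weight-module facts. In fact your appeal to Lemma~\ref{lem:positive2} is stronger than what you actually use --- you only need that $\dot{w'}\cdot\eta$ has weight $w'(\lambda)$, which is immediate from the fact that $\dot{w'}$ normalizes $T$, not that it lies in $\B(\lambda)$. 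So your route is strictly more elementary than the paper's; what the paper's citation buys is brevity, and perhaps a pointer to the broader context in which such identities live (Lusztig's $T_i$-operators on canonical bases), while your argument buys transparency and makes clear that the lemma is really a statement about extremal weight vectors rather than about the canonical basis per se.
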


\begin{proof}
This follows from Lemma \ref{lem:positive2} and properties of the 
canonical basis, see e.g.\ the proof of \cite[Proposition 28.1.4]{Lus:Quantum}.
\end{proof}

\section{$(G/P)_{\geq 0}$ as a CW complex: attaching maps using toric varieties} \label{Sec:Attach}

Recall that a {\it CW complex} is a union $X$  of cells with additional
requirements on how cells are {\it glued}: in particular, for each cell $\sigma$,
one must define a (continuous) {\it attaching map} $h: B \to X$ where $B$ is a closed ball,
such that the restriction of $h$ to the interior of $B$ is a homeomorphism with 
image $\sigma$.

Even given the parameterizations of cells, it is not obvious how to define
attaching maps.  One needs to extend the domain of each map 
$\phi^{>0}_{\mathbf v_+, \mathbf w}$ from 
 $(\R_{>0})^{\mathbf v_+^c}$ (an open ball) to a closed ball.  However,
a priori it is not clear how to let the parameters approach $0$ and infinity.
In this section we explain how, 
following earlier work for Grassmannians \cite{PSW}, the authors \cite{RW}
defined attaching maps for the cells and proved that the 
cell decomposition of $(G/P)_{\geq 0}$ is a CW complex.

Lemma \ref{important} below is the key to defining attaching maps.  It says that 
one can compactify $(\R_{>0})^{\mathbf v_+^c}$ inside a 
toric variety related to the parameterization, obtaining a closed ball
(the non-negative part of the toric variety).
We refer to \cite{Fulton} and \cite{Sottile} for the basics on toric varieties
and their non-negative parts.  Let $S\in \Z^r$  be a finite set whose elements are ordered,
$\mathbf {m_0},\dotsc, \mathbf {m_K},$  and
thought of as corresponding to monomials, 
$\mathbf t^{\mathbf {m_j}}=t_1^{m_{j,1}}t_2^{m_{j,2}}\dotsc t_r^{m_{j,r}}$. We let 
$X_S$ denote the toric subvariety
of $\mathbb P ^K$ associated to $S$ as in \cite{Sottile}, and 
 $X_S^{>0}$ and $X_S^{\ge 0}$ its positive and 
non-negative parts, respectively. Explicitly,
$X_S$ is the closure of the image of the associated map 
\begin{equation}\label{e:chi}
\chi=\chi_S:\ \mathbf t=(t_1,\dotsc, t_r)\mapsto [\mathbf t^{\mathbf m_0}, \mathbf t^{\mathbf m_1},
\dotsc, \mathbf t^{\mathbf m_K}]
\end{equation}
from $(\C^*)^r$ to $\mathbb P^K$, while $X_S^{>0}$, and $X_S^{\ge 0}$ are obtained
as the image of $\mathbb R_{>0}^r$ and its closure.

A fact which is 
crucial here is that
$X_S^{\ge 0}$ is homeomorphic to a closed ball. More specifically,
$X_S$ has a moment map which gives a homeomorphism 
from $X_S^{\ge 0}$  to the convex hull $B_S$ of $S$.

\begin{lemma}\label{important} \cite{PSW}
Suppose we have a map $\phi: (\R_{>0})^r \to \mathbb P^{N}$ given by
\begin{equation*}
(t_1, \dots , t_r) \mapsto [p_1(t_1,\dots,t_r), \dots , p_{N+1}(t_1,\dots,t_r)],
\end{equation*}
where the $p_i$'s are Laurent polynomials with positive coefficients.  Let $S$ be the
set of all exponent vectors in $\Z^r$ which occur among the (Laurent) monomials
of the $p_i$'s, and let $B_S$ be the convex hull of the points of $S$.
Then the map $\phi$ factors through the totally positive part
$X_S^{>0}$ of the toric variety, giving a map
$\Phi_{>0}: X_S^{>0} \to \PPP^{N}$.  Moreover $\Phi_{>0}$ extends continuously to the
closure to give a well-defined map
$\Phi_{\ge 0}:X_S^{\ge 0} \to \overline{\Phi_{>0}(X_{S}^{>0})}$.
Note that if we precompose with the isomorphism $B_S\cong X_S^{\ge 0} $given by
the moment map, we can consider the domain of $\Phi_{\ge 0}$ to be 
the polytope $B_S$, a closed ball.
\end{lemma}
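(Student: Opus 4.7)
The plan is to realize $\phi$ as the composition of the toric embedding $\chi_S$ with a \emph{linear} projective map whose non-negativity of coefficients makes it everywhere defined on $X_S^{\ge 0}$.

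Write each $p_i$ in the given monomial basis as $p_i(\mathbf t)=\sum_{j=0}^{K}c_{ij}\,\mathbf t^{\mathbf m_j}$ with $c_{ij}\ge 0$, and note that by the very definition of $S$, each column of the matrix $C=(c_{ij})$ contains at least one strictly positive entry. Let $L:\C^{K+1}\to\C^{N+1}$ be the linear map $z\mapsto Cz$, and let $\bar L:\PPP^{K}\dashrightarrow \PPP^{N}$ be its projectivization, defined precisely where $Cz\ne 0$. By construction $\phi=\bar L\circ\chi_S$ on $\R^{r}_{>0}$, so since $\chi_S$ surjects onto $X_S^{>0}$, restricting $\bar L$ to $X_S^{>0}$ gives the desired factorization $\Phi_{>0}:X_S^{>0}\to\PPP^{N}$. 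This map is certainly defined on $X_S^{>0}$ because $p_i(\mathbf t)>0$ whenever $\mathbf t\in\R^{r}_{>0}$.

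For the extension it suffices to show $\bar L$ is defined on all of $X_S^{\ge 0}$: then $\Phi_{\ge 0}:=\bar L\vert_{X_S^{\ge 0}}$ is continuous as the restriction of a continuous map, and its image lies in $\overline{\bar L(X_S^{>0})}=\overline{\Phi_{>0}(X_S^{>0})}$ since $X_S^{\ge 0}=\overline{X_S^{>0}}$. Every $[z]\in X_S^{\ge 0}$ admits a representative with $z_j\ge 0$ and $z\ne 0$, because $X_S^{>0}$ lies in the positive orthant of $\PPP^{K}$ whose closure is the non-negative orthant. Picking any index $j_0$ with $z_{j_0}>0$ and any $i_0$ with $c_{i_0 j_0}>0$, we get $(Cz)_{i_0}\ge c_{i_0 j_0}z_{j_0}>0$, so $Cz\ne 0$ as required. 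This elementary positivity observation is the entire substantive content of the lemma, and is what makes $X_S$ the right compactifying object for the domain of $\phi$.

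Finally, the identification of $X_S^{\ge 0}$ with the polytope $B_S$ via the moment map, and hence with a closed ball, is the standard theorem on the non-negative part of a projective toric variety (see \cite{Fulton,Sottile}). I do not anticipate any genuine obstacle; the only point one might worry about is confirming that $X_S^{\ge 0}$ really consists of points with non-negative representative coordinates, which follows from closing up the image of $\R^{r}_{>0}$ inside the non-negative orthant of $\PPP^{K}$.
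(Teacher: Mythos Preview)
Your argument is correct and is essentially the natural proof of this fact: factor $\phi$ as $\bar L\circ\chi_S$ where $L$ is the linear map given by the coefficient matrix, and observe that positivity of the coefficients together with the fact that every monomial in $S$ actually occurs in some $p_i$ forces $\bar L$ to be defined on the entire non-negative orthant of $\PPP^K$, hence on $X_S^{\ge 0}$.

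There is nothing to compare against in the present paper: the lemma is quoted from \cite{PSW} and no proof is reproduced here. Your write-up is the standard argument and is what one finds in that reference. The only mildly delicate point you flag yourself---that every point of $X_S^{\ge 0}$ admits a non-negative coordinate representative---is handled correctly by noting that $X_S^{>0}$ sits inside the open positive orthant and $X_S^{\ge 0}$ is its closure.
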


The following result constructs attaching maps for cells of
$(G/P)_{\geq 0}$ \cite{RW}.

\begin{theorem} \cite{RW} \label{RWTheorem}
For any $G/B$ we can construct a positivity preserving embedding
$i:G/B \to \mathbb P^N$, for some $N$, with the following property.
For any totally non-negative cell $\mathcal R_{x,w; >0}$ and parameterization
$\phi^{>0}_{\mathbf{x_+}, \mathbf{w}}$ as in Section~\ref{s:param}, the composition
$$
i\circ\phi^{>0}_{\mathbf x_+, \mathbf{w}}:(\R_{>0})^{\mathbf x^c_+}\overset{\sim}\To\mathcal 
R_{x,w; >0}\hookrightarrow
\mathbb P^N
$$
takes the form
$$
i \circ\phi^{>0}_{\mathbf x_+, \mathbf{w}} : \mathbf t=  ( t_r)_{r\in\mathbf x^c_+}\mapsto [p_1(\mathbf t),\dotsc,p_{N+1}(\mathbf t)],
$$
where the $p_j$'s are polynomials with positive coefficients.
Applying Lemma \ref{important} to $i \circ\phi^{>0}_{\mathbf x_+, \mathbf{w}}$,
we get an attaching map 
$\Phi^{\geq 0}_{\mathbf x_+, \mathbf{w}}: X^{\geq 0}_{\mathbf x_+, \mathbf w} \to
\overline{\mathcal R_{x,w;>0}}$ where the non-negative toric variety 
$X^{\geq 0}_{\mathbf x_+,\mathbf w}$ 
is homeomorphic to its moment polytope $B_{\mathbf x_+,\mathbf w}$.

\end{theorem}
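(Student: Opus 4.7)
The plan is to produce $i$ via a canonical-basis embedding into a projective space built from a highest-weight representation of $G$, then show that the coordinates of the composition $i\circ\phi^{>0}_{\mathbf x_+,\mathbf w}$ are polynomials in the parameters $\mathbf t$ with non-negative coefficients. Once positivity is established, Lemma~\ref{important} applies directly to produce the attaching map on a closed ball.

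For the embedding, in the simply-laced case I would fix a regular dominant weight $\lambda$ (for instance $\lambda=\sum_{i\in I}\omega_i$) and embed $G/B$ into $\PPP(V(\lambda))$ via $gB^+\mapsto[g\cdot\eta]$; the homogeneous coordinates are the coefficients of $g\cdot\eta$ in the canonical basis $\B(\lambda)$. For non-simply-laced $G$, I would pass to the simply-laced cover $\dot G$ via the folding of Section~\ref{s:automorphism}: choose a $\sigma$-invariant regular dominant weight $\dot\lambda$ and use the $\tau$-fixed subspace $V(\dot\lambda)^\tau$, exploiting the compatibility of pinnings to transfer positivity from the simply-laced setting.

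For positivity of the coordinates, write $g_r=y_{i_r}(t_r)$ for $r\in\mathbf x_+^c$ and $g_r=\dot s_{i_r}$ for $r\in\mathbf x_+$, and expand
\[
g_1\dotsm g_m\cdot\eta \;=\; \sum_{b\in\B(\lambda)} c_b(\mathbf t)\,b.
\]
For the $y$-factors positivity is automatic: $y_i(t)=\sum_{a\ge 0} t^a f_i^{(a)}$, and each divided power $f_i^{(a)}$ acts on $\B(\lambda)$ by a non-negative integer matrix, a fundamental positivity property of the canonical basis. The difficulty is the $\dot s_{i_r}$ with $r\in\mathbf x_+$, whose action on general vectors need not preserve the positive cone. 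The strategy is to replace each such factor by a divided power $f_{i_r}^{(a_r)}$ by applying Lemma~\ref{lem:canonbasis2} iteratively, right-to-left: at the step where $\dot s_{i_r}$ is about to act, the vector $v:=g_{r+1}\dotsm g_m\cdot\eta$ has been produced from $\eta$ by operations that respect the structure of the positive subexpression $\mathbf x_+$, and one shows $\dot s_{i_r}\cdot v$ admits an expression as $f_{i_r}^{(a_r)}\cdot v$ (with any remaining $x_j$-type factors absorbed by later $B^+$-quotienting, as in Lemma~11.4 of \cite{MarRie:ansatz} alluded to in Section~\ref{s:coord}). After every such replacement, $g_1\dotsm g_m\cdot\eta$ is seen to lie in the positive cone with polynomial coefficients in $\mathbf t$.

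The main obstacle is controlling the replacement of $\dot s_{i_r}$ by $f_{i_r}^{(a_r)}$ in the presence of interleaved $y$-factors and of canonical-basis summands of distinct weights, for which a uniform divided power need not exist a priori. Overcoming this requires a careful induction that leverages both the combinatorics of Deodhar's positive subexpression and the positivity properties of the canonical basis recalled in Section~\ref{s:CanonBasis}; this is where the specific design of the parameterization $\phi_{\mathbf x_+,\mathbf w}$ plays its essential role. With positivity established, Lemma~\ref{important} applied to the set $S$ of exponent vectors of the monomials appearing in $p_1,\dotsc,p_{N+1}$ yields the continuous extension $\Phi^{\ge 0}_{\mathbf x_+,\mathbf w}:X^{\ge 0}_{\mathbf x_+,\mathbf w}\to\overline{\mathcal R_{x,w;>0}}$ with $X^{\ge 0}_{\mathbf x_+,\mathbf w}\cong B_{\mathbf x_+,\mathbf w}$, completing the construction of the attaching map.
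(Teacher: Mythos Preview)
Your high-level plan coincides with the paper's: embed $G/B$ into $\mathbb P(V(\rho))$ using canonical-basis coordinates in the simply-laced case, and reduce the non-simply-laced case to this by folding. (One small discrepancy: the paper does not pass to the $\tau$-fixed subspace $V(\dot\lambda)^\tau$; instead it uses the composition $i':\mathcal R_{v,w}\hookrightarrow \dot{\mathcal R}_{v,w}\to \mathbb P(V(\dot\rho))$ of Lemma~\ref{commutative}, so the homogeneous coordinates come from the full canonical basis $\mathcal B(\dot\rho)$, with Lemma~\ref{commutative}(2) supplying the polynomial dependence on $\mathbf t$.) The paper itself gives only this sketch and defers the actual positivity verification to \cite{RW}.

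Where your proposal runs into trouble is the mechanism you offer for positivity in the simply-laced case. The assertion that $\dot s_{i_r}\cdot v$ can be written as $f_{i_r}^{(a_r)}\cdot v$ for a single exponent $a_r$, where $v=g_{r+1}\dotsm g_m\cdot\eta$, fails as soon as $v$ has components in more than one weight space: $\dot s_i$ sends the $\mu$-weight space to the $s_i(\mu)=\mu-\langle\mu,\alpha_i^\vee\rangle\alpha_i$ weight space, so the exponent needed depends on $\mu$. Once even one $y$-factor has acted (and for the rightmost $\dot s$ at position $j_k$ there are typically several $y$'s to its right), $v$ is spread over weights with different $\langle\mu,\alpha_i^\vee\rangle$, and no uniform $a_r$ exists. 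Lemma~\ref{lem:canonbasis2} only covers the case where what lies to the right of $\dot s_{i_r}$ is a product of $\dot s$'s acting on $\eta$, i.e.\ a single extremal weight vector. Your appeal to Lemma~11.4 of \cite{MarRie:ansatz} does not rescue this: that lemma lets stray $x_j$-factors be pushed to the right and absorbed into $B^+$, which is legitimate in $\mathbb P(V)$ since $B^+\cdot\eta\subset\R\eta$, but the relevant commutation identities (e.g.\ Section~\ref{s:coord}(2)) move $y$'s past $\dot s$'s rather than eliminating the $\dot s$'s, and they introduce \emph{rational} (not polynomial) dependence on the parameters. So even granting the absorption, your right-to-left divided-power replacement does not yield the claimed polynomials with positive coefficients; one needs a genuinely different argument, which is what \cite{RW} supplies.
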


In Theorem \ref{RWTheorem}, the map $i$ is defined as follows.
When $G$
is simply laced,
we consider 
the 
representation
$V=V(\rho)$ of $G$ with
a fixed highest weight vector $\eta$ 
and corresponding canonical basis $\mathcal B(\rho)$.
We let $i:\mathcal B\to \mathbb P(V)$ denote the embedding
which takes $g\cdot B_+\in \mathcal B$ to the line
$\left <g\cdot \eta\right >$.
This is the unique $g \cdot B_+$-stable line in $V$.  
We specify points in the projective space $\mathbb P(V)$
using homogeneous coordinates corresponding to
$\mathcal B(\rho)$. The theorem then follows using the 
positivity properties of the canonical basis in simply 
laced type.

If $G$ is not simply laced, we  use a folding argument 
to deduce the result from the simply laced case: 
the map $i$ is given by $i^{\prime}$ from 
Lemma \ref{commutative} as we will now explain.
Let $\dot G$ be the simply laced group with automorphism
corresponding to $G$.
We identify $G$ with $\dot G^\tau$ and use
  all of the notation from Section~\ref{s:automorphism}. For
  any $\bar i\in \bar I$
  there is a simple reflection $s_{\bar i}$ in $W$, which is
  represented in $\dot G$ by
  $$
  \dot s_{\bar i}:=\prod_{i\in\bar i}\dot s_i.
  $$
  In this way any reduced expression $\mathbf w=(\bar i_1,\bar i_2,\dotsc, \bar i_m)$ in $W$ gives rise to a reduced
  expression $\dot{\mathbf w}$ in $\dot W$ of length
  $\sum_{k=1}^m |\bar i_k|$, which is determined
  uniquely up to commuting elements \cite{Steinberg}.
  To a subexpression $\mathbf v$ of $\mathbf w$
  we can then associate a unique subexpression
  $\dot {\mathbf v}$ of $\dot{\mathbf w}$ in
  the obvious way.

  \begin{lemma}\cite{RW} \label{commutative}
Let $v,w$ be in $W$ with $v\le w$.
   \begin{enumerate}
   \item
   We have
   $$
  \mathcal R_{v,w;>0}=\dot{\mathcal R}_{v,w;>0}\cap \mathcal B^{\tau}.
   $$
   In particular the composition $i':\mathcal R_{v,w}\hookrightarrow \dot{\mathcal R}_{v,w}
   \to \mathbb P(V(\dot\rho))$ is positivity preserving.
   \item
   Suppose $\mathbf w=(\bar i_1,\dotsc, \bar i_m)$ is a reduced expression for $w$ in $W$, and $\mathbf v_+^c=(h_1,\dotsc,h_r)$ is the 
complement of the positive subexpression for $v$. Then we have a commutative diagram,
   \begin{equation*}
   \begin{CD}
   \mathcal R_{v,w;>0}& @>\iota>> &\dot{\mathcal R}_{v,w;>0}\\
   @A\phi^{>0}_{\mathbf v_+,\mathbf w}AA & & @AA\phi^{>0}_{\dot{\mathbf v}_+,\dot{\mathbf w}}A\\
   \R_{>0}^{\mathbf v_+^c} &@>\bar \iota>> & \R_{>0}^{\dot {\mathbf v}_+^c},
   \end{CD}
   \end{equation*}
   where the top arrow is the usual inclusion, the vertical arrows are both isomorphisms,
   and the map $\bar \iota$  has the form
   $$(t_1,\dotsc, t_r)\mapsto (t_1,\dotsc, t_1, t_2,\dotsc, t_2,\dotsc, t_r),$$
    where each $t_l$ is repeated $|\bar i_{h_l}|$ times on the right hand side.
   \end{enumerate}
   \end{lemma}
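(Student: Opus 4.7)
The plan is to exploit the compatibility between the folding setup of Section \ref{s:automorphism} and the Marsh--Rietsch parametrizations of Section \ref{s:param}, by ``unfolding'' reduced expressions in $W$ into reduced expressions in $\dot W$ and checking that positive subexpressions transform correctly.

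For part (1), I would first establish the semigroup identity $U^-_{\geq 0} = \dot U^-_{\geq 0} \cap \dot G^\tau$. The inclusion $\subseteq$ is immediate: the generators $y_{\bar i}(m) = \prod_{i\in\bar i} y_i(m)$ of $U^-_{\geq 0}$ are products of commuting factors (elements in a $\sigma$-orbit share no Dynkin edge, so the $y_i(m)$ with $i\in\bar i$ commute pairwise in $\dot G$), they lie in $\dot U^-_{\geq 0}$, and they are fixed by $\tau$ since $\tau(y_i(m))=y_{\sigma(i)}(m)$. For the reverse inclusion, take $u\in \dot U^-_{\geq 0}\cap \dot G^\tau$ and use Lusztig's parametrization $\dot U^-_{>0}(\dot w)$: writing $u=y_{i_1}(t_1)\cdots y_{i_N}(t_N)$ for a reduced $\dot{\mathbf w}$, the parameters $t_k\in\R_{>0}$ are unique, so $\tau$-invariance forces them to be constant on each $\sigma$-orbit, whence $u$ is a product of the generators $y_{\bar i}(m)$. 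Since $\mathcal R_{v,w}$ and $\dot{\mathcal R}_{v,w}$ are both cut out using $\tau$-fixed data ($T, B^\pm, \dot v, \dot w$), intersecting yields $\mathcal R_{v,w;>0} = \dot{\mathcal R}_{v,w;>0}\cap\mathcal B^\tau$. The ``in particular'' statement is immediate from this: the inclusion $i$ composed with $\dot i$ sends points of $\mathcal R_{v,w;>0}$ into $\dot i(\dot{\mathcal R}_{v,w;>0})$, which lies in the totally positive part of $\mathbb P(V(\dot\rho))$ by the construction of $\dot i$.

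For part (2), the key combinatorial fact is that the reduced expression $\dot{\mathbf w}$ obtained by replacing each $\bar i_k$ in $\mathbf w$ with the factors of $\dot s_{\bar i_k} = \prod_{i\in\bar i_k}\dot s_i$ (in any order—elements of a single orbit commute) is indeed reduced in $\dot W$ (this is the standard folding fact from Steinberg cited above), and that the positive subexpression $\dot{\mathbf v}_+$ for $v$ in $\dot{\mathbf w}$ is obtained by unfolding $\mathbf v_+$ in the analogous way: positions $k\in\mathbf v_+$ contribute all their $|\bar i_k|$ factors to $\dot{\mathbf v}_+$, while positions $h_l\in\mathbf v_+^c$ contribute all their factors to $\dot{\mathbf v}_+^c$. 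This can be verified by induction on $m$ using the rightmost/positive-subexpression characterization from Section \ref{s:param}: at each step, multiplying by $\dot s_{\bar i_k}$ on the right corresponds to multiplying by $\prod_{i\in\bar i_k}\dot s_i$ in $\dot W$, and since the $s_i$ for $i\in\bar i_k$ commute and are pairwise orthogonal, the length either decreases by $|\bar i_k|$ (all factors go into $\dot{\mathbf v}_+$) or stays unchanged on each individual factor (none do). With this combinatorial correspondence in hand, the commutativity of the diagram becomes formal: evaluating $\phi^{>0}_{\dot{\mathbf v}_+,\dot{\mathbf w}}$ at $\bar\iota(t_1,\dotsc,t_r)$ produces a product of factors which, when grouped according to the blocks $\bar i_k$ of $\mathbf w$, equals $\dot s_{\bar i_k}$ when $k\in\mathbf v_+$ and $y_{\bar i_k}(t_l)$ when $k=h_l\in\mathbf v_+^c$—that is, exactly $\phi^{>0}_{\mathbf v_+,\mathbf w}(t_1,\dotsc,t_r)$ viewed inside $\dot{\mathcal R}_{v,w;>0}$ via $\iota$.

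The main obstacle is the combinatorial claim that unfolding $\mathbf v_+$ yields $\dot{\mathbf v}_+$; the rightmost/positive property is defined by a maximality condition, so one must verify that maximality is preserved under unfolding rather than replaced by some other reduced subexpression for $v$. This boils down to the observation that within a single block $\bar i_k$ either every simple reflection lowers the length or none does, because the simple reflections commute and are mutually orthogonal; thus the choice between $\mathbf v_+$ and $\mathbf v_+^c$ at the block level determines the choice at every factor inside the block, and the rightmost property on $\mathbf w$ translates directly to the rightmost property on $\dot{\mathbf w}$. Once this is in place, everything else is bookkeeping.
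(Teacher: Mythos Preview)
The paper does not prove this lemma; it is quoted from \cite{RW} without argument, so there is no in-paper proof to compare against. Your outline is essentially the right one and matches what one finds in \cite{RW}, but two points deserve tightening.

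For part~(2), your key claim---that within a block $\bar i_k$ either every $s_i$ raises the length of the current partial product or every $s_i$ lowers it---is correct, but the stated reason (``commute and are mutually orthogonal'') is not sufficient by itself: in $A_1\times A_1$ with $u=s_1$ one has $us_1<u$ yet $us_2>u$, so commutativity alone does not force uniformity. What makes it work here is that the partial products of $\mathbf v_+$ at block boundaries lie in $W\subset\dot W$, and for $u\in W$ one has $\ell_{\dot W}(u\,s_{\bar i_k})=\ell_{\dot W}(u)\pm|\bar i_k|$ (this is Steinberg's length compatibility for the folded reduced expression). Since each of the $|\bar i_k|$ commuting factors changes $\dot W$-length by $\pm 1$ and the net change is $\pm|\bar i_k|$, every individual step goes the same way. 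Once you say this, the rightmost property transfers as you describe.

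For part~(1), the identity $U^-_{\geq 0}=\dot U^-_{\geq 0}\cap\dot G^\tau$ is fine, but it does not immediately yield $\mathcal R_{v,w;>0}=\dot{\mathcal R}_{v,w;>0}\cap\mathcal B^\tau$: one must pass to the flag variety, take closures, and then intersect with Bruhat/opposite-Bruhat strata, and $\dot{\mathcal B}^\tau$ can have components other than $\mathcal B$. A cleaner route, which you essentially already have in hand from part~(2), is to argue directly with the Marsh--Rietsch parametrizations: the inclusion $\subseteq$ is exactly the commutative diagram, and for $\supseteq$ take $p\in\dot{\mathcal R}_{v,w;>0}$ with $\tau(p)=p$, write $p=\phi^{>0}_{\dot{\mathbf v}_+,\dot{\mathbf w}}(\mathbf t)$, note that applying $\tau$ permutes the coordinates of $\mathbf t$ within each block (since $\tau(y_i(a))=y_{\sigma(i)}(a)$ and block factors commute), and invoke uniqueness of the parameters to conclude $\mathbf t\in\operatorname{im}\bar\iota$. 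This is the same uniqueness trick you used for the semigroup, applied at the right level.
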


\begin{remark}
For partial flag varieties we can also use Theorem \ref{RWTheorem}
to construct an attaching map for each 
$\mathcal P^J_{x,u,w;>0}$.  The projection
$\pi^J:\mathcal R_{xu^{-1},w;>0}\to\mathcal P^J_{x,u,w;>0}$ is a 
homeomorphism so we take the composition 
$\Phi^{\geq 0}_{\mathbf{xu^{-1}_+},\mathbf w} \circ \pi^J$ as our attaching map.
\end{remark}

\begin{theorem} \cite{RW} \label{RWCW}
$(G/P)_{\geq 0}$ is a CW complex.
\end{theorem}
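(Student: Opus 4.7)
The plan is to verify the defining properties of a CW complex for the cell decomposition $(G/P)_{\geq 0} = \bigsqcup_{(x,u,w)\in \I^J} \mathcal P^J_{x,u,w;>0}$, using the attaching maps constructed in Theorem \ref{RWTheorem} together with the homeomorphism $\pi^J$ described in the remark. Explicitly, for each triple $(x,u,w)\in \I^J$, the composition $\pi^J\circ\Phi^{\geq 0}_{\mathbf{xu^{-1}_+},\mathbf w}$ is a continuous map from the closed ball $X^{\geq 0}_{\mathbf{xu^{-1}_+},\mathbf w}$ (homeomorphic to its moment polytope $B_{\mathbf{xu^{-1}_+},\mathbf w}$) onto a compact subset of $(G/P)_{\geq 0}$. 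By Theorem \ref{t:param} the restriction to the open interior $(\R_{>0})^{\mathbf{xu^{-1}_+^c}}$ is a homeomorphism onto $\mathcal P^J_{x,u,w;>0}$. The only nontrivial axiom to check is that the image of the boundary $\partial X^{\geq 0}_{\mathbf{xu^{-1}_+},\mathbf w}$ lies entirely in the union of cells of strictly smaller dimension.

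The key step is to show that the image of the full attaching map coincides with $\overline{\mathcal P^J_{x,u,w;>0}}$ and meets the top-dimensional cell only in the image of the interior. Since $X^{\geq 0}$ is compact and the attaching map is continuous, its image is a compact, hence closed, set containing $\mathcal P^J_{x,u,w;>0}$, so it contains $\overline{\mathcal P^J_{x,u,w;>0}}$; conversely, it is contained in that closure by continuity applied to limits of interior points. By Rietsch's cell decomposition of $(G/P)_{\geq 0}$ from \cite{Rietsch1}, this closure is a finite disjoint union of cells $\mathcal P^J_{x',u',w';>0}$, all but one of strictly smaller dimension. Thus it suffices to verify that no boundary point maps back into the open top cell. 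Suppose for contradiction that $b\in \partial X^{\geq 0}$ satisfies $\pi^J\circ\Phi^{\geq 0}(b)=p\in \mathcal P^J_{x,u,w;>0}$. Since cells in a finite stratification are locally closed, there is a neighborhood $V$ of $p$ in $(G/P)_{\geq 0}$ with $V\cap \overline{\mathcal P^J_{x,u,w;>0}}\subset \mathcal P^J_{x,u,w;>0}$. Choosing a sequence of interior points $t_n\to b$ whose images eventually lie in $V$, we see $\pi^J\circ\Phi^{\geq 0}(t_n)\to p$ inside $\mathcal P^J_{x,u,w;>0}$. But the restriction of the attaching map to the interior is a homeomorphism onto this cell, so its continuous inverse forces $t_n$ to converge to the unique interior preimage $t_p$ of $p$, contradicting $t_n\to b\in \partial X^{\geq 0}$.

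Once the boundary axiom is verified, the remaining CW conditions are immediate: closure-finiteness holds because there are only finitely many cells, and the weak topology is automatic since $(G/P)_{\geq 0}$ is compact Hausdorff. I expect the main obstacle in this argument to be secretly encoded in the hypotheses rather than in the topological bookkeeping — namely, Theorem \ref{RWTheorem} itself, which requires the positivity-preserving embedding $i:G/B\hookrightarrow \mathbb P^N$ and the delicate verification that the coordinates of each $i\circ\phi^{>0}_{\mathbf x_+,\mathbf w}$ are polynomials with positive coefficients (so that Lemma \ref{important} applies). This rests on the positivity properties of Lusztig's canonical basis in the simply-laced case and the folding construction of Lemma \ref{commutative} to reduce the general case to it. Granting that machinery, the verification that the induced collection of characteristic maps assembles into a CW complex is the streamlined topological argument above.
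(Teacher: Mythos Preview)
The paper does not prove Theorem~\ref{RWCW} here; it is quoted from \cite{RW}, where the CW structure is established using precisely the attaching maps of Theorem~\ref{RWTheorem}. Your proposal correctly fills in the topological bookkeeping needed to pass from those attaching maps to the CW axioms, and your own assessment is accurate: the substantive content lies in Theorem~\ref{RWTheorem} (positivity of the canonical basis and the folding reduction of Lemma~\ref{commutative}), which you are granted.

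One small imprecision worth flagging: the domain of the attaching map is the closed ball $X^{\geq 0}_{\mathbf{xu^{-1}_+},\mathbf w}$, whose interior is $X^{>0}_{\mathbf{xu^{-1}_+},\mathbf w}$, not literally $(\R_{>0})^{\mathbf{xu^{-1}_+^c}}$. You need that $\Phi^{\geq 0}$ restricted to $X^{>0}$ is a homeomorphism onto the open cell. This does follow: since $i\circ\phi^{>0}_{\mathbf v_+,\mathbf w}=\Phi^{>0}\circ\chi$ and the left side is injective (Theorem~\ref{t:param} plus $i$ an embedding), the surjection $\chi:(\R_{>0})^r\to X^{>0}$ is a bijection, and then $\Phi^{>0}$ has continuous inverse $\chi\circ(i\circ\phi^{>0})^{-1}$. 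With that in hand, your contradiction argument for the boundary axiom is clean and correct. The detour through the neighborhood $V$ is not strictly necessary, since the images $\Phi(t_n)$ of interior points already lie in the top cell by construction; you can apply the inverse homeomorphism directly.
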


\section{The poset $\mathcal{Q}^J$ of cells of $(G/P_J)_{\geq 0}$ and a regularity criterion} 
\label{sec:regularity}

In this section we will  review the description of the face poset 
of $(G/P_J)_{\geq 0}$ which was 
given by 
the first author \cite{Rietsch2}.
We will then prove 
Theorem \ref{regularity}, giving a condition which ensures
that a cell $\sigma$ is a {\it regular face} of another cell $\tau$
with respect 
to the attaching map of $\tau$.

\begin{definition}\label{faceposet}
Let $K$ be a finite CW complex, and let $Q$ denote its set of cells.
The notation $\sigma^{(p)}$ indicates that $\sigma$ is a cell of dimension
$p$.  We write $\tau>\sigma$ if $\sigma \neq \tau$ 
and $\sigma \subset \overline{\tau}$, where
$\overline{\tau}$ is the closure of $\tau$, and 
we say $\sigma$ is a face of $\tau$.  
This gives $Q$ the structure of a partially ordered set,
which we refer to as the 
{\it face poset} of $K$.
Sometimes we will augment $Q$ by adding a least element $\hat{0}$:
in this case 
we will say that $\tau > \hat{0}$ for all $\tau$, and we will 
call this the {\it augmented face poset} of $K$.
\end{definition}

\begin{remark}\label{augmented}
Our notion of face poset agrees with the notion used by Forman \cite{RF}.
However, Bjorner \cite{B2} defines the face poset of a 
cell complex to be the poset of cells augmented by 
a least element $\hat{0}$ 
{\it and} a greatest element $\hat{1}$.   In this paper we will 
never add a $\hat{1}$ to a poset because all posets we consider
already have a unique greatest element.
\end{remark}

A description of the face poset of $(G/P_J)_{\geq 0}$ was given in 
\cite{Rietsch2}. See also the paper \cite{GoodearlYakimov}  of Goodearl and Yakimov, 
who independently defined an
isomorphic poset in their study of the $T$-orbits of symplectic leaves 
for a Poisson structure on $G/P_J$.

\begin{theorem}\cite{Rietsch2} \label{RTheorem}
Fix $W$ and $W_J$, the Weyl group and its parabolic subgroup
corresponding to $G/P_J$.  
Let $\Q^J$ denote the augmented face poset of 
$(G/P_J)_{\geq 0}$ with its decomposition into
totally nonnegative cells. The elements of $\Q^J$ are indexed 
by $\I^J\cup \hat{0}$, where $\I^J$ is as in Definition~\ref{index}.

The order relations in $\Q^J$ are described in terms of Weyl group 
combinatorics by
$$
\mathcal P_{x,u,w;>0}^J \leq {\mathcal P_{x',u',w';>0}^J}
$$
if and only if  
there exist $u_1, u_2 \in W_J$ with $u_1 u_2 = u$ and
$\ell(u) = \ell(u_1)+\ell(u_2)$, such that
$x'{u'}^{-1} \leq x u_2^{-1} \leq w u_1 \leq w'.$ Moreover $\hat{0}< P_{x,u,w;>0}^J $
for all $(x,u,w)\in \mathcal I^J$.
\end{theorem}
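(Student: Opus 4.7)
The plan is to reduce to the full flag variety via the projection $\pi^J : \B \to \PP^J$. First, I would establish that the closure relation among the cells $\mathcal R_{v,w;>0}$ of $\B_{\geq 0}$ is controlled by Bruhat order on pairs: for $v \leq w$ and $v' \leq w'$,
\begin{equation*}
\mathcal R_{v,w;>0} \subseteq \overline{\mathcal R_{v',w';>0}} \iff v' \leq v \leq w \leq w'.
\end{equation*}
The ``only if'' direction is immediate from $\overline{\mathcal R_{v',w';>0}} \subseteq \overline{\mathcal R_{v',w'}}$ together with the known stratification of the closure of a Richardson variety in $\B$. For the ``if'' direction I would use the parameterization $\phi^{>0}_{\mathbf v'_+,\mathbf w'}$ of Theorem~\ref{t:param}: choose a reduced word $\mathbf w'$ for $w'$ that contains a reduced word $\mathbf w$ for $w$ as a subword, and arrange the positive subexpression for $v'$ in $\mathbf w'$ so that it restricts, on the embedded $\mathbf w$, to the positive subexpression for $v$ in $\mathbf w$. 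Letting the ``extra'' parameters tend to $0$ or $\infty$ and invoking the subtraction-free braid and commutation moves of Section~\ref{s:coord}, every point of $\mathcal R_{v,w;>0}$ is realized as a limit of points of $\mathcal R_{v',w';>0}$, which forces the inclusion.

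Next I would transfer this to the partial flag variety. Because $\pi^J$ is continuous and closed, $\pi^J(\overline{\mathcal R_{x'{u'}^{-1}, w';>0}}) = \overline{\mathcal P^J_{x',u',w';>0}}$, so the containment $\mathcal P^J_{x,u,w;>0} \subseteq \overline{\mathcal P^J_{x',u',w';>0}}$ holds precisely when some cell $\mathcal R_{v,w_0;>0}$ appearing in the stratification of $\overline{\mathcal R_{x'{u'}^{-1}, w';>0}}$, hence satisfying $x'{u'}^{-1} \leq v \leq w_0 \leq w'$, is mapped by $\pi^J$ onto $\mathcal P^J_{x,u,w;>0}$. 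The decisive combinatorial step is to identify which cells of $\B_{\geq 0}$ project onto a given $\mathcal P^J_{x,u,w;>0}$: these are precisely the cells $\mathcal R_{xu_2^{-1}, wu_1;>0}$ arising from length-additive factorizations $u = u_1u_2$ with $u_1,u_2 \in W_J$, reflecting the fact that the fibers of $\pi^J$ inside $\B_{\geq 0}$ are modelled on the totally nonnegative part of the parabolic flag variety $P_J/B$, whose cells are indexed by $W_J$ compatibly with the parameterizations of Section~\ref{s:param}. Combined with the full flag variety closure relation from the first step, this yields the stated Weyl group criterion, and the augmentation by $\hat 0$ is built in trivially since every cell of positive dimension contains $\hat 0$ under the poset convention.

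The main obstacle is the ``if'' direction of the full flag variety closure, that is, actually producing the limiting degeneration from $\mathcal R_{v',w';>0}$ down to $\mathcal R_{v,w;>0}$. The raw parameterization $\phi^{>0}_{\mathbf v'_+,\mathbf w'}$ does not behave transparently as parameters go to $0$ or $\infty$; in particular, parameters tending to infinity force one to first rewrite the parameterization by the braid and commutation moves of Section~\ref{s:coord} before the limit becomes a visible parameterization of $\mathcal R_{v,w;>0}$. Orchestrating these moves uniformly across all choices of reduced expression, while exploiting positivity properties (as in Section~\ref{s:CanonBasis}) so that the limits stay inside $\B_{\geq 0}$, constitutes the technical heart of the argument.
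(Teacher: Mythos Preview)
This theorem is not proved in the present paper; it is quoted from \cite{Rietsch2} and used as input. There is therefore no ``paper's own proof'' to compare against here.

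That said, your outline is broadly the strategy of \cite{Rietsch2}: establish the closure order on the cells of $\B_{\ge 0}$ as the interval order $v'\le v\le w\le w'$, and then push forward under $\pi^J$ using the identification of the fiber of $\pi^J$ over $\mathcal P^J_{x,u,w;>0}$ with the cells $\mathcal R_{xu_2^{-1},\,wu_1;>0}$ coming from length-additive factorizations $u=u_1u_2$ in $W_J$. Two points to tighten. First, you cannot ``arrange the positive subexpression for $v'$ in $\mathbf w'$ so that it restricts \dots\ to the positive subexpression for $v$ in $\mathbf w$'': the positive subexpression is uniquely determined by $v'$ and $\mathbf w'$, so there is nothing to arrange, and in general it need not sit inside a chosen subword $\mathbf w$. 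The actual degeneration argument proceeds one cover at a time (decreasing $w'$ or increasing $v'$ by a single reflection), and even then the limits are not read off directly from $\phi^{>0}_{\mathbf v'_+,\mathbf w'}$ but, as you anticipate, require the positivity and subtraction-free machinery. Second, your last clause is off: $\hat 0$ is a formal minimum adjoined to the poset, and by definition it lies below \emph{every} cell, including the $0$-dimensional ones; it is not a matter of ``cells of positive dimension containing $\hat 0$''.
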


\begin{remark}
When $G/P_J$ is a (type A) Grassmannian,  $\Q^J$
is the poset of cells of the totally non-negative Grassmannian,
first studied by Postnikov \cite{Postnikov}.
\end{remark}

When $P_{x,u,w;>0}^J < P_{x',u',w';>0}^J$ and 
$\dim P_{x',u',w';>0}^J = \dim P_{x,u,w;>0}^J+1$,
we will write 
$P_{x,u,w;>0}^J \lessdot P_{x',u',w';>0}^J$.

Suppose a cell 
$\sigma^{(p)}$ is a face of $\tau^{(p+1)}$.  Let $B$ be a closed ball of dimension $p+1$,
and let $h: B \to K$ be the attaching map for $\tau$, i.e.\ $h$ is a continuous map that
maps $\interior(B)$ homeomorphically onto $\tau$.
The following definition is essential to discrete Morse theory for 
general CW complexes, as  collapses of cells 
must take place along {\it regular edges}.  

\begin{definition}\cite[Definition 1.1]{RF} \label{regularface}
We say that $\sigma^{(p)}$ is a regular face of $\tau^{(p+1)}$
(with respect to the attaching map $h$ for $\tau$)
and that $(\sigma, \tau)$ is a regular edge, if
\begin{enumerate}
  \item $h: h^{-1}(\sigma)\to \sigma$ is a homeomorphism,
  \item $\overline{h^{-1}(\sigma)}$ is a closed $p$-ball.
\end{enumerate}
\end{definition}

To use discrete Morse theory
in our situation we must find enough regular edges.
However, the toric varieties and 
attaching maps in Theorem \ref{RWTheorem} 
are constructed using the canonical
basis, and hence are not at all explicit.
Thus at first glance it might seem hopeless to deduce whether 
a cell $\sigma$ is a regular face of $\tau$ with respect 
to an attaching map $h$ for $\tau$.  
Fortunately, by the following result we do have a situation in which we can 
prove regularity of a pair of faces.
We will first prove Theorem \ref{regularity} in the case of complete
flag varieties, and then generalize it to partial flag varieties.

\begin{theorem}\label{regularity}
Consider 
$\mathcal P_{x,u,w; >0}^{J} \gtrdot \mathcal P_{x',u',w; >0}^J$ in $(G/P_J)_{\geq 0}$ and
let $\mathbf{w}=(i_1, \dotsc, i_m)$ 
be a reduced expression for $w$.
We call 
this pair of cells {\bf good} with respect to $\mathbf w$
if the positive  subexpression 
$\mathbf{x'u'^{-1}_+}$ is equal to
$\mathbf{xu^{-1}_+} \cup \{k\}$ and moreover 
$\mathbf{xu^{-1}_+}$ contains $\{k+1,\dotsc, m\}$.
In this case 
$\mathcal P_{x',u',w; >0}^J$ is a regular face of 
$\mathcal P_{x,u,w; >0}^J$ with respect to 
the attaching map $\Phi^{\geq 0}_{\mathbf{xu^{-1}_+},\mathbf w} \circ \pi^J$.
\end{theorem}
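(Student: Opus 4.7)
The plan is to carry out the proof in three stages: first reduce to the case of the complete flag variety ($J = \emptyset$), then reduce to the simply-laced case by folding, and finally analyze the attaching map on an explicit face of the moment polytope using the canonical basis of $V(\rho)$. For the first reduction, by the Remark following Theorem~\ref{RWTheorem} the partial-flag attaching map is $\pi^J \circ \Phi^{\geq 0}_{\mathbf{xu^{-1}_+}, \mathbf w}$, and since $\pi^J$ restricts to homeomorphisms on both $\mathcal R_{xu^{-1}, w; >0}$ and $\mathcal R_{x'u'^{-1}, w; >0}$, regularity at the complete-flag level transfers directly to the partial-flag level. For the second, Lemma~\ref{commutative} realizes the $G$-attaching map as the restriction of the $\dot G$-attaching map along the diagonal embedding $\bar\iota$, which at the level of moment polytopes induces a linear map identifying the candidate face for $G$ with the corresponding face for $\dot G$; so regularity in the simply-laced case implies it for $G$. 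Thus I may assume $G$ is simply laced and $J = \emptyset$.

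Under the canonical basis embedding $i:\mathcal B \to \mathbb P(V(\rho)),\; gB^+ \mapsto [g\cdot\eta]$, the parametrization reads $i \circ \phi^{>0}_{\mathbf{xu^{-1}_+}, \mathbf w}(\mathbf t) = [g_1 g_2 \cdots g_m \cdot \eta]$ with $g_r = \dot s_{i_r}$ if $r \in \mathbf{xu^{-1}_+}$ and $g_r = y_{i_r}(t_r)$ otherwise. The good condition forces $g_{k+1} \cdots g_m = \dot s_{i_{k+1}} \cdots \dot s_{i_m}$, so $v := g_{k+1} \cdots g_m \cdot \eta$ is a canonical basis element by Lemma~\ref{lem:positive2}. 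Writing $y_{i_k}(t_k) = \sum_{b \geq 0} t_k^b f_{i_k}^{(b)}$ and applying Lemma~\ref{lem:canonbasis2} to the reduced expression $(i_k, i_{k+1},\ldots, i_m)$ yields $a \in \N$ with $f_{i_k}^{(a)} v = \dot s_{i_k} v$ a canonical basis element and $f_{i_k}^{(a+1)} v = 0$. Expanding gives
\begin{equation*}
g_1 \cdots g_m \cdot \eta \; = \; \sum_{b=0}^{a} t_k^b \cdot g_1 \cdots g_{k-1} f_{i_k}^{(b)} v,
\end{equation*}
so every homogeneous coordinate $p_j(\mathbf t)$ has $t_k$-degree at most $a$, and its coefficient of $t_k^a$ is the $j$-th canonical basis coefficient of $g_1 \cdots g_{k-1} \dot s_{i_k} \dot s_{i_{k+1}} \cdots \dot s_{i_m} \cdot \eta$, i.e.\ precisely the parametrization $i \circ \phi^{>0}_{\mathbf{x'u'^{-1}_+}, \mathbf w}$ of $\mathcal R_{x'u'^{-1}, w; >0}$. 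It follows that the moment polytope $B_{\mathbf{xu^{-1}_+}, \mathbf w}$ has a facet $F := \{\mathbf m : m_k = a\}$, and forgetting the $k$-th coordinate gives an affine isomorphism $F \cong B_{\mathbf{x'u'^{-1}_+}, \mathbf w}$; in particular $F$ is a closed ball of the correct dimension, and under this identification the restriction of $\Phi^{\geq 0}_{\mathbf{xu^{-1}_+}, \mathbf w}$ to $F$ agrees with $\Phi^{\geq 0}_{\mathbf{x'u'^{-1}_+}, \mathbf w}$, mapping $\interior(F)$ homeomorphically onto $\mathcal R_{x'u'^{-1}, w; >0}$.

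The main obstacle I anticipate is verifying the preimage equality $h^{-1}(\mathcal R_{x'u'^{-1}, w; >0}) = \interior(F)$ and not merely the inclusion $\supseteq$: interior points of $X^{\geq 0}_{\mathbf{xu^{-1}_+}, \mathbf w}$ map into $\mathcal R_{xu^{-1}, w; >0}$ and so do not contribute, but a priori some other boundary face of $B_{\mathbf{xu^{-1}_+}, \mathbf w}$ could also map into the smaller cell. To rule this out, one runs the analogous rescaling analysis on every other boundary face of the polytope: the good condition singles out the $e_k$-maximizing face as the unique one whose limit substitutes $\dot s_{i_k}$ at position $k$ without further degeneration, because the contiguous suffix $\{k+1, \ldots, m\} \subseteq \mathbf{xu^{-1}_+}$ lets the canonical-basis argument run without any other rescalings interfering. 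Every other boundary face thus produces a limit parametrization associated with a positive subexpression distinct from $\mathbf{x'u'^{-1}_+}$, and hence lands in a different cell of $\overline{\mathcal R_{xu^{-1}, w; >0}}$ by uniqueness of positive subexpressions for a fixed Weyl group element and reduced expression. Combined with the previous step, this gives both conditions in Definition~\ref{regularface}.
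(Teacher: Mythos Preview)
Your first two stages---reducing to $J=\emptyset$ and identifying the facet $F$ via the canonical-basis expansion---are essentially the content of Proposition~\ref{almostregularface} in the paper, and you have the right idea there. You also correctly identify the real difficulty: showing that $h^{-1}(\mathcal R_{x'u'^{-1},w;>0})$ equals $\interior(F)$ rather than merely containing it.

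The gap is in your resolution of that difficulty. You assert that ``every other boundary face \ldots\ produces a limit parametrization associated with a positive subexpression distinct from $\mathbf{x'u'^{-1}_+}$, and hence lands in a different cell.'' But the faces of the moment polytope $B_{\mathbf{xu^{-1}_+},\mathbf w}$ are \emph{not} in bijection with positive subexpressions; in general there are far more faces than there are cells in $\overline{\mathcal R_{xu^{-1},w;>0}}$, and the attaching map is known not to be injective on the boundary. So distinct facets can very well map into the same cell, and a face-by-face ``rescaling analysis'' does not immediately yield the uniqueness you need. The phrase ``without further degeneration'' is doing unjustified work: a boundary point can lie on several facets at once, and you have not shown that approaching it forces the particular limit you want.

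The paper's approach to this step is substantially more involved. Rather than analyzing faces of the polytope, it analyzes arbitrary $1$-parameter approaches $\chi(z^C\cdot\mathbf t)$ to a boundary point $x'$ with $\Phi^{\ge 0}(x')\in\mathcal R_{v',w;>0}$, and proves that the exponent vector $C$ is forced (up to positive scalar) to be $(0,\dots,0,c)$ with $c>0$. This uniqueness is established in three steps (Propositions~\ref{step1}--\ref{step3}): first for reduced expressions of $w$ ending in a reduced expression for $v'$, using the projection $\pi^w_{wv'^{-1}}:B^+\dot w\cdot B^+\to B^+\dot w\dot v'^{-1}\cdot B^+$; then for arbitrary reduced expressions of $w_0$, by tracking the piecewise-linear transformation of $C$ under subtraction-free braid moves; and finally for general $w$, by extending $\mathbf w$ to a reduced expression of $w_0$. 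Only after this uniqueness is in hand does one conclude $x'\in X^{>0}_{\mathbf v'_+,\mathbf w}$. Your proposal does not supply any of this machinery, and without it the preimage equality remains unproved.
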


When the choice of reduced expression $\mathbf w$ and the attaching
map are clear from context, we will sometimes omit the phrase 
{\it with respect to $\mathbf w$} or {\it with respect to the 
attaching map}.

\begin{proposition}\label{almostregularface}
Choose a reduced expression 
$\mathbf w = (i_1, \dots ,i_m)$ for $w$, and  
suppose that the pair $\mathcal R_{v,w; >0} \gtrdot \mathcal R_{v',w; >0}$ 
is good with respect to $\mathbf w$.
Suppose that $\mathbf v_+$ and $\mathbf v'_+$ are related by 
$\mathbf v'_+=\mathbf v_+\cup\{k\}$.
Then $X_{\mathbf v'_+,\mathbf w}$ can be identified with a    
sub-toric variety of $X_{\mathbf v_+,\mathbf w}$, and its moment polytope 
$B_{\mathbf v'_+,\mathbf w}$ is
a facet of the moment polytope $B_{\mathbf v_+,\mathbf w}$
of $X_{\mathbf v_+,\mathbf w}$.
Moreover, the attaching map
$\Phi^{\geq 0}_{\mathbf v_+, \mathbf{w}}: X^{\geq 0}_{\mathbf v_+, \mathbf w} \to
\overline{\mathcal R_{v,w;>0}}$ restricts to 
$X^{\geq 0}_{\mathbf v'_+,\mathbf w}$ to give the attaching map 
$\Phi^{\geq 0}_{\mathbf v'_+,\mathbf w}$ for $\overline{\mathcal R_{v',w;>0}}$.  
\end{proposition}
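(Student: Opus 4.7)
The plan is to treat the simply laced case explicitly using the canonical basis expansion, and then reduce the non simply laced case via the folding diagram of Lemma \ref{commutative}. In the simply laced case one works in $\mathbb P(V(\rho))$ so that
$$
i\circ \phi^{>0}_{\mathbf v_+,\mathbf w}(\mathbf t)=\bigl[g_1\cdots g_{k-1}\, y_{i_k}(t_k)\, \dot s_{i_{k+1}}\cdots \dot s_{i_m}\cdot \eta\bigr],
$$
where I have used the goodness hypothesis that all positions $>k$ lie in $\mathbf v_+$ to set $g_r=\dot s_{i_r}$ for $r>k$. The heuristic is that as $t_k\to \infty$ the factor $y_{i_k}(t_k)$ ``becomes'' $\dot s_{i_k}$, and the good pair hypothesis is exactly what makes this substitution produce the parameterization of $\mathcal R_{v',w;>0}$; realizing this heuristic rigorously inside the toric compactification is the content of the proposition.

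The key algebraic input is Lemma \ref{lem:canonbasis2}. Since $(i_k,i_{k+1},\dotsc, i_m)$ is a suffix of the reduced expression $\mathbf w$, it is itself reduced, so the lemma produces an integer $a\in \N$ with
$$
f_{i_k}^{(a)}\,\dot s_{i_{k+1}}\cdots \dot s_{i_m}\cdot \eta =\dot s_{i_k}\dot s_{i_{k+1}}\cdots \dot s_{i_m}\cdot \eta,\qquad f_{i_k}^{(a+1)}\,\dot s_{i_{k+1}}\cdots \dot s_{i_m}\cdot \eta=0.
$$
Substituting $y_{i_k}(t_k)=\sum_{n\ge 0}t_k^n f_{i_k}^{(n)}$ and pulling $g_1\cdots g_{k-1}$ through the (finite) sum, each coordinate polynomial $p_j(\mathbf t)$ has $t_k$-degree at most $a$, and the vector of $t_k^a$-coefficients equals $g_1\cdots g_{k-1}\,\dot s_{i_k}\,\dot s_{i_{k+1}}\cdots \dot s_{i_m}\cdot \eta = i\circ \phi^{>0}_{\mathbf v'_+,\mathbf w}\bigl((t_r)_{r\in\mathbf v'^c_+}\bigr)$. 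In particular this vector is non-zero, so a monomial of $t_k$-exponent $a$ does appear.

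Translating to the toric picture, the exponent set $S$ of $(p_j)$ sits inside the halfspace $\{s_k\le a\}\subset \R^{\mathbf v^c_+}$, and the intersection $S\cap\{s_k=a\}$ is in bijection, via forgetting the $k$-th coordinate, with the exponent set $S'$ defining $\Phi^{\ge 0}_{\mathbf v'_+,\mathbf w}$. Hence the face $F:=B_{\mathbf v_+,\mathbf w}\cap \{s_k=a\}$ is affinely isomorphic to $B_{\mathbf v'_+,\mathbf w}$; a dimension count $\dim F=|\mathbf v'^c_+|=|\mathbf v^c_+|-1$ shows it is a facet, and the standard face/sub-toric-variety dictionary for projective toric varieties identifies $X_{\mathbf v'_+,\mathbf w}$ with the sub-toric variety of $X_{\mathbf v_+,\mathbf w}$ corresponding to $F$, sitting inside $X^{\ge 0}_{\mathbf v_+,\mathbf w}$ as the relevant facet under the moment map homeomorphism. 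The restriction of $\Phi^{\ge 0}_{\mathbf v_+,\mathbf w}$ to this facet is given by taking the leading $t_k^a$-coefficient of each coordinate polynomial, which by the previous paragraph is exactly $\Phi^{\ge 0}_{\mathbf v'_+,\mathbf w}$.

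The main obstacle I anticipate lies in the non simply laced case, where Lemma \ref{commutative} realizes $\phi^{>0}_{\mathbf v_+,\mathbf w}$ as $\phi^{>0}_{\dot{\mathbf v}_+,\dot{\mathbf w}}\circ \bar\iota$, so that the factor $y_{\bar i_k}(t_k)$ becomes a commuting product $\prod_{i\in \bar i_k}y_i(t_k)$ all depending on the single variable $t_k$. One must iterate Lemma \ref{lem:canonbasis2} along the orbit $\bar i_k$ to show that the $t_k$-degree of each canonical basis coordinate is at most $\sum_{i\in\bar i_k}a_i$, with leading coefficient $\bigl(\prod_{i\in\bar i_k}\dot s_i\bigr)\cdot v_0=\dot s_{\bar i_k}\cdot v_0$, and then track the resulting exponents in the variables $(t_r)_{r\in\mathbf v^c_+}$ after the identification by $\bar\iota$. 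Commutativity of the simple reflections within each $\sigma$-orbit makes the iterated application of Lemma \ref{lem:canonbasis2} routine, and once the leading-term identification is established the polytope-facet argument is identical to the simply laced one.
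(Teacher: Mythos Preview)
Your proposal is correct and follows essentially the same route as the paper: use Lemma~\ref{lem:canonbasis2} to bound the $t_k$-degree of the canonical basis coordinates by $a$ and identify the $t_k^a$-coefficient with the $\mathbf v'_+$-parameterization, then read off the sub-toric variety and facet from the face $\{s_k=a\}$ of the exponent polytope; in the non-simply-laced case both you and the paper iterate Lemma~\ref{lem:canonbasis2} over the $\sigma$-orbit $\bar i_k$ to obtain a total degree bound $\sum a_i$. The only cosmetic difference is that the paper phrases the identification of the sub-toric variety via the limit $t_k\to\infty$ rather than via the exponent-set bijection $S\cap\{s_k=a\}\leftrightarrow S'$, and it deduces the facet dimension from the attaching map being a homeomorphism on interiors rather than from your direct count $\dim B_{\mathbf v'_+,\mathbf w}=|\mathbf v'^c_+|$ (which in turn rests on Theorem~\ref{t:param}).
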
 

\begin{proof}
Let us first consider the case that $G$ is simply-laced.
By our assumptions the parameterizations of the two cells take the form
\begin{eqnarray*}
\phi^{>0}_{\mathbf v_+,\mathbf w} &=& g_1 \dots g_{k-1} y_{i_k}(t_k) \dot s_{i_{k+1}}
           \dots \dot s_{i_m} \cdot B^+,\\  
\phi^{>0}_{\mathbf v'_+,\mathbf w} &= &g_1 \dots g_{k-1} \dot s_{i_k} \dot s_{i_{k+1}}
           \dots \dot s_{i_m} \cdot B^+.
\end{eqnarray*} 
If we compose the parameterization $\phi^{>0}_{\mathbf v_+,\mathbf w}$ with the inclusion 
$i:\mathcal R_{v,w}\hookrightarrow\mathbb P^N$ from Theorem~\ref{RWTheorem}
we get a  map 
$$
\mathbf t=(t_{h_1},\dotsc, t_{h_r},t_k)\mapsto [p_1(\mathbf t),\dots,p_{N+1}(\mathbf t)],
$$ 
where 
the $p_j$'s are polynomials with positive coefficients.  We note that by the definition of the map $i$, 
which we recalled just after Theorem~4.2,
$$
 [p_1(\mathbf t),\dots,p_{N+1}(\mathbf t)]=
           \left<g_1\dotsc g_{k-1}y_{i_k}(t_k)\dot s_{i_{k+1}} \dots \dot s_{i_m} \cdot \eta\right>.
$$
Here 
we have identified $\mathbb P^N$ with $\mathbb P(V(\rho))$ using the canonical basis.

If we take the limit as $t_k \to \infty$ we obtain a new map
\begin{multline}\label{e:limitmap}
\mathbf t'=(t_{h_1},\dotsc, t_{h_r})\mapsto [p'_1(\mathbf t'),\dotsc, p'_{N+1}(\mathbf t')] \\
=\lim_{t_k\to\infty}\left<g_1\dotsc g_{k-1}y_{i_k}(t_k)\dot s_{i_{k+1}} \dots \dot s_{i_m} \cdot \eta\right>=
\left<g_1\dotsc g_{k-1}\dot s_{i_k} \dot s_{i_{k+1}} \dots \dot s_{i_m} \cdot \eta\right>.
\end{multline}
Here the last equality follows by applying $g_1\dotsc g_{k-1}$ to the identity
$$\lim_{t_k\to\infty}\left<y_{i_k}(t_k)\dot s_{i_{k+1}} \dots \dot s_{i_m} \cdot \eta\right>=
\left<\dot s_{i_k} \dot s_{i_{k+1}} \dots \dot s_{i_m} \cdot \eta\right>,
$$
which comes from expanding the action of $y_{i_k}(t_k)=\exp(t_k f_{i_k})$, using that
\begin{eqnarray*}
f_{i_k}^{(a)} \dot s_{i_{k+1}} \dots \dot s_{i_m} \cdot \eta &= &
\dot s_{i_k} \dot s_{i_{k+1}} \dots \dot s_{i_m} \cdot \eta,\\
f_{i_k}^{(a+1)} \dot s_{i_{k+1}} \dots \dot s_{i_m} \cdot \eta &= &0,
\end{eqnarray*}
for a positive integer $a$, by Lemma~\ref{lem:canonbasis2}.

By the same argument, $a$ as above is the highest power of $t_k$
appearing in any $p_j$. Therefore to obtain homogeneous coordinates
$p'_j(\mathbf t')$ for the limit point 
we may divide each $p_j(\mathbf t)$ by $t_k^a$ and 
 take
$$
p_j'(\mathbf t')=\lim_{t_k\to\infty}\ \frac 1{t_k^a}p_j(\mathbf t).
$$
The monomials of $p_j(\mathbf t)$ which don't vanish in this limit are precisely
those which are multiples of this maximal power, $t_k^a$. 

It follows that the toric variety $X_{\mathbf v'_+,\mathbf w}$ is the sub-toric
variety of $X_{\mathbf v_+,\mathbf w}$ which is given precisely by those 
monomials which are multiples of $t_k^a$ (and other coordinates set to zero). 
Its moment polytope can be identified
with the face of $B_{\mathbf v_+,\mathbf w}$ cut out by the hyperplane
$x_k=a$. Moreover from \eqref{e:limitmap} it follows that 
the attaching map $\Phi_{\mathbf v'_+,\mathbf w}^{\ge 0}:X^{\ge 0}_{\mathbf v'_+,\mathbf w}\to 
\overline{\mathcal R_{v',w;>0}}$ 
is the restriction of
$\Phi_{\mathbf v_+,\mathbf w}^{\ge 0}$. This also implies that 
$B_{\mathbf v'_+,\mathbf w}$, which is isomorphic to 
$X^{\ge 0}_{\mathbf v'_+,\mathbf w}$, has codimension $1$ in   
$B_{\mathbf v_+,\mathbf w}$, making it a facet.

In the non simply-laced case the proof is analogous. 
However now the attaching map for  $\overline{\mathcal R_{v,w;>0}}$
is obtained  from $\phi^{>0}_{\dot{\mathbf v}_+, \dot{\mathbf w}} \circ \bar \iota$
as in Lemma \ref{commutative}, where 
$\phi^{>0}_{\dot{\mathbf v}_+, \dot{\mathbf w}}$ is the 
corresponding parameterization in the related simply
laced group $\dot G$.
So we are looking at  parameterizations
of $\mathcal R_{v,w;>0}$ and $\mathcal R_{v',w;>0}$ embedded 
into $\dot {\mathcal R}_{ v, w}$ and $\dot {\mathcal R}_{ v', w}$,
respectively, which take the form
\begin{eqnarray*}
\mathbf t&\mapsto&\bar g_1 \dots \bar g_{k-1} y_{i_{k,1}}(t_{k}) y_{i_{k,2}}(t_{k}) \dots 
                  y_{i_{k,l}}(t_{k}) \dot s_{\bar i_{k+1}} \dots \dot s_{\bar i_m} \cdot B^+,\\ 
\mathbf t'&\mapsto &\bar g_1 \dots\bar g_{k-1} 
                  \dot s_{\bar i_{k}} \dot s_{\bar i_{k+1}} \dots \dot s_{\bar i_m} \cdot B^+,
\end{eqnarray*}                 
where $\dot s_{\bar i_k}=\dot s_{i_{k,1}} \dot s_{i_{k,2}} \dots \dot s_{i_{k,l}}$.

As before, there are 
unique positive integers $a_1,\dotsc, a_l$   
 such that 
$$f_{i_{k,1}}^{(a_1)} 
  \dots f_{i_{k,l}}^{(a_l)} \dot s_{\bar i_{k+1}} \dots \dot s_{\bar i_m} \cdot \eta = 
\dot s_{i_{k,1}} \dot s_{i_{k,2}} \dots 
                  \dot s_{i_{k,l}} \dot s_{\bar i_{k+1}} \dots \dot s_{\bar i_m} \cdot \eta,
                  $$ 
and for each $1 \leq h \leq l$, if we increase the corresponding exponent by $1$,  we have
$$f_{i_{k,h}}^{(a_h+1)} 
  \dots f_{i_{k,l}}^{(a_l)} \dot s_{\bar i_{k+1}} \dots \dot s_{\bar i_m} \cdot \eta =0.
$$
Now the composition
$i'\circ\phi_{\mathbf v_+,\mathbf w}^{>0}$ for $i'$ as in  Lemma~\ref{commutative} takes the form 
$$
\mathbf t\mapsto [p_1(\mathbf t),\dotsc, p_{N+1}(\mathbf t)],
$$
for polynomials $p_j$ with positive coefficients. And by the observation
about the $f_{i_{k,h}}$'s, 
the maximal power of $t_k$ in any of the $p_j$'s is $t_k^{a_1+\dotsc+a_l}$.

Finally we look at what happens if $t_k$ tends to infinity and repeat the 
arguments from the simply-laced case. In this case
$X_{\mathbf v'_+,\mathbf w}$ is the sub-toric
variety of $X_{\mathbf v_+,\mathbf w}$, which is given by those 
monomials which are multiples of $t_k^{a_1+\dotsc + a_l}$ 
(and other coordinates set to zero), and 
its moment polytope can be identified
with the face of $B_{\mathbf v_+,\mathbf w}$ cut out by the equations
$x_1=a_1,\dotsc, x_l=a_l$. Moreover from the analogue of \eqref{e:limitmap} it follows that 
the attaching map 
$\Phi_{\mathbf v'_+,\mathbf w}^{\ge 0}:X^{\ge 0}_{\mathbf v'_+,\mathbf w}\to 
\overline{\mathcal R^{>0}_{v',w}}$ 
is the restriction of
$\Phi_{\mathbf v_+,\mathbf w}^{\ge 0}$. This also implies that 
$B_{\mathbf v'_+,\mathbf w}$, which is isomorphic to 
$X^{\ge 0}_{\mathbf v'_+,\mathbf w}$, has codimension $1$ in   
$B_{\mathbf v_+,\mathbf w}$, making it a facet.
 \end{proof}

\begin{remark}\label{r:limit}
We note that in the situation of Proposition~\ref{almostregularface} we have also shown that
for any point $\chi(t_1,\dotsc, t_r)$ in 
$X^{>0}_{\mathbf v_+,\mathbf w}$, 
with $\chi$ as in Section \ref{Sec:Attach} \eqref{e:chi}, and any positive integer~$c$, the limit 
$\lim_{z\to\infty}\chi(t_1,\dotsc, t_{r-1},z^c t_r)$, lies in $X^{>0}_{\mathbf v'_+,\mathbf w}$.
\end{remark}
\begin{remark}\label{remains}
Proposition \ref{almostregularface} is a big step towards proving
Theorem \ref{regularity} for $(G/B)_{\geq 0}.$
To relate our notation to Definition \ref{regularface},
let $\tau=\mathcal R_{v,w;>0}$, $\sigma=\mathcal R_{v',w;>0}$,
$h=\Phi^{\geq 0}_{\mathbf v_+,\mathbf w}$, and 
$h'=\Phi^{\geq 0}_{\mathbf v'_+,\mathbf w}$.
By Proposition \ref{almostregularface}, 
$h^{-1}(\sigma)$ contains
$X^{>0}_{\mathbf v'_+, \mathbf w}$.  If we could show that
this is an equality, then because 
$h|_{X^{\geq 0}_{\mathbf v'_+, \mathbf w}}$ is the attaching map
$h'$, the restriction
$h|_{h^{-1}(\sigma)}=h|_{X^{> 0}_{\mathbf v'_+, \mathbf w}}$ would be a homeomorphism,
proving that Definition \ref{regularface} (1) is satisfied.
Furthermore, 
$\overline{h^{-1}(\sigma)} = \overline{X^{>0}_{v'_+,w}}$ is a 
closed ball of appropriate dimension, verifying (2).

\end{remark}

\begin{proposition}\label{step1}
Suppose that $w>v'\gtrdot v$ and we have a reduced expression
$\mathbf w = (i_1, \dotsc,i_j, \dotsc, i_m)$
such that $v'=s_{i_{j+1}}s_{i_{j+2}} \dots s_{i_m}$, and $v$ is obtained from 
$v'$ by removing a unique factor $s_{i_k}$ for $j+1 \leq k \leq m$.
Suppose furthermore that we have a sequence 
$(c_1,c_2,\dots, c_{j},c_k) \in \Z^{j+1}$ such that for $z>0$ and some/any 
fixed $t_1,t_2,\dots, t_j, t_k \in\R_{>0}$ the $1$-parameter family 
\begin{equation*}
g_z \cdot B^+:= y_{i_1}(z^{c_1} t_1) \dots y_{i_j}(z^{c_j} t_j) \dot s_{i_{j+1}} \dots
\dot s_{i_{k-1}} y_{i_k}(z^{c_{k}} t_{k}) \dot s_{i_{k+1}} \dots \dot s_{i_m} \cdot B^+
\end{equation*}
in $\mathcal R_{v,w;>0}$ tends as $z\to \infty$ to an element of $\mathcal R_{v',w;>0}$.
Then we must have $c_1 = \dots = c_j = 0$ and $c_{k} > 0$.
\end{proposition}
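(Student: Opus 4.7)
The plan is to interpret the limit $\lim_{z\to\infty} g_z\cdot B^+$ via the moment polytope of the toric variety used to construct the attaching map $\Phi^{\ge 0}_{\mathbf v_+,\mathbf w}$ of $\overline{\mathcal R_{v,w;>0}}$, and then read off the conclusion from the normal-cone structure of one of its facets. By Proposition~\ref{almostregularface}, $\Phi^{\ge 0}_{\mathbf v_+,\mathbf w}$ factors through the moment-map homeomorphism $X^{\ge 0}_{\mathbf v_+,\mathbf w} \cong B_{\mathbf v_+,\mathbf w} \subset \R^{\mathbf v_+^c}$ (coordinates indexed by $\{1,\dots,j,k\}$) and sends the open stratum corresponding to the facet $F\subset B_{\mathbf v_+,\mathbf w}$ cut out by $x_k = a$ homeomorphically onto $\mathcal R_{v',w;>0}$; here $a$ is the top power of $t_k$ among the defining monomials, and $a = a_1+\cdots+a_l$ in the non-simply-laced case. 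Therefore the hypothesis $\lim g_z\cdot B^+ \in \mathcal R_{v',w;>0}$ is equivalent to $\lim_{z\to\infty}\chi(z^{c_1}t_1,\dots,z^{c_k}t_k)$ existing in that open stratum.

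I would then compute this latter limit directly in $X_{\mathbf v_+,\mathbf w}\subseteq\PPP^K$ from the monomial presentation $\chi(\mathbf t) = [\mathbf t^{\mathbf m_0},\dots,\mathbf t^{\mathbf m_K}]$. Substituting $t_r\mapsto z^{c_r} t_r$ multiplies the $i$-th homogeneous coordinate by $z^{\mathbf c\cdot \mathbf m_i}$, where $\mathbf c=(c_1,\dots,c_j,c_k)$; after dividing by the dominant power $z^M$ with $M=\max_i \mathbf c\cdot \mathbf m_i$, the only coordinates surviving in the $z\to\infty$ limit are those indexed by the $\mathbf m_i$ maximizing $\mathbf c\cdot x$ on $B_{\mathbf v_+,\mathbf w}$. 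Hence the limit lies in the sub-toric variety corresponding to the face $F_{\mathbf c}$ of $B_{\mathbf v_+,\mathbf w}$ where $\mathbf c\cdot x$ is maximized; since the surviving coordinates are strictly positive combinations of the $t^{\mathbf m_i}$ (as all $t_r>0$), the limit in fact lies in the open stratum of that sub-toric variety, i.e.\ its moment-map image lands in the relative interior of $F_{\mathbf c}$. The hypothesis therefore becomes $F_{\mathbf c}=F$, equivalently that $\mathbf c$ lies in the relative interior of the normal cone $N_F(B_{\mathbf v_+,\mathbf w})$.

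Finally, since $B_{\mathbf v_+,\mathbf w}$ is full-dimensional in $\R^{\mathbf v_+^c}$ and $F$ is its facet cut out by $x_k = a$ with $B_{\mathbf v_+,\mathbf w}\subseteq\{x_k\le a\}$, the normal cone $N_F$ is the ray $\R_{\ge 0}\mathbf e_k$ and its relative interior is $\R_{>0}\mathbf e_k$. Combined with $\mathbf c\in\Z^{\mathbf v_+^c}$, this forces $c_1=\cdots=c_j=0$ and $c_k\in\Z_{>0}$, as required. The only subtle point is the non-simply-laced case, where the attaching map is constructed via folding and the canonical basis of $V(\dot\rho)$; however, the relevant toric variety $X_{\mathbf v_+,\mathbf w}$ is still built from the $W$-parameters $(t_r)_{r\in\mathbf v_+^c}$, and Proposition~\ref{almostregularface} identifies the sub-toric variety $X_{\mathbf v'_+,\mathbf w}$ with the facet of $B_{\mathbf v_+,\mathbf w}$ cut out by the single equation $x_k = a_1+\cdots+a_l$ with outer normal $\mathbf e_k$, so the normal-cone conclusion is unchanged.
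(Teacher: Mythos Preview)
Your argument contains a circularity. You assert that $\lim g_z\cdot B^+ \in \mathcal R_{v',w;>0}$ is \emph{equivalent} to the toric limit $\lim_{z\to\infty}\chi(z^{C}\cdot\mathbf t)$ lying in the open stratum over the facet $F=\{x_k=a\}$. One direction does follow from Proposition~\ref{almostregularface}; but the converse---that $(\Phi^{\ge 0}_{\mathbf v_+,\mathbf w})^{-1}(\mathcal R_{v',w;>0})$ is contained in (hence equal to) that open stratum---is exactly the claim~\eqref{e:preimage} in the proof of Theorem~\ref{regularity}. In the paper's logical order, \eqref{e:preimage} is established via Proposition~\ref{step3}, which rests on Proposition~\ref{step2}, which in turn rests on the present Proposition~\ref{step1}. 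So you cannot invoke it here. Without that converse, knowing that the toric limit lands in the open stratum of the face $F_{\mathbf c}$ on which $\mathbf c\cdot x$ is maximal does not force $F_{\mathbf c}=F$: a priori the attaching map $\Phi^{\ge 0}_{\mathbf v_+,\mathbf w}$ could send open strata of other faces of $B_{\mathbf v_+,\mathbf w}$ into $\mathcal R_{v',w;>0}$ as well, since nothing yet rules out identifications on the boundary.

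The paper sidesteps this by an argument intrinsic to the flag variety rather than the toric compactification: it uses the continuous projection $\pi=\pi^w_{wv'^{-1}}:B^+\dot w\cdot B^+\to B^+\dot w\dot v'^{-1}\cdot B^+$, which on parameterizations simply truncates the last $m-j$ factors. Because the limit stays inside $B^+\dot w\cdot B^+$, one may commute $\pi$ with the limit and obtain that $y_{i_1}(z^{c_1}t_1)\cdots y_{i_j}(z^{c_j}t_j)\cdot B^+$ is a family in $\mathcal R_{e,wv'^{-1};>0}$ whose $z\to\infty$ limit remains in that same open cell; any nonzero $c_l$ with $l\le j$ would force the limit to a smaller cell, so $c_1=\cdots=c_j=0$, after which $c_k>0$ is immediate. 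This projection argument is what your toric/normal-cone approach is missing.
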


\begin{proof}
Recall that since $w$ ends
in $v'$ (that is, $\ell(w{v'}^{-1})=\ell(w)-\ell(v')$) we have a continuous map 
\begin{equation*}
\pi = \pi^w_{wv'^{-1}}: B^+ \dot w \cdot B^+ \to B^+ \dot w \dot v'^{-1} \cdot B^+.
\end{equation*}
See for example Section 4.3 of \cite{MarRie:ansatz}.
In terms of our parameterizations, if $x<w$ and we consider an 
element 
$$\phi_{\mathbf x_+,\mathbf w}(\mathbf t)=g_1\cdots g_m\cdot B^+\in
\mathcal R_{x,w}$$  
then $\pi(g_1\cdots g_m\cdot B^+)$ is just given by deleting the last 
$m-j$ factors spelling out the $v'$:
$$
\pi(g_1\cdots g_m\cdot B^+)=g_1\cdots g_j\cdot B^+.
$$
Note that in particular $\pi$ preserves total nonnegativity and takes
both $\mathcal R_{v',w;>0}$ and $\mathcal R_{v,w;>0}$ to the same cell, namely
$\mathcal R_{e,wv'^{-1};>0}$.

Now since the limit of $g_z \cdot B^+$ is assumed to lie in $\mathcal R_{v',w;>0}$
everything is taking place in $B^+ \dot w \cdot B^+$, the domain of $\pi$,
and we can apply $\pi$ to $g_z \cdot B^+$ before and after taking the limit
$z \to \infty$:

\begin{equation*}
\lim(\pi(g_z \cdot B^+)) = \pi(\lim(g_z \cdot B^+)) \in \mathcal R_{e,wv'^{-1};>0}.
\end{equation*}

So we see that 
\begin{equation*}
\pi(g_z \cdot B^+) = y_{i_1}(z^{c_1} t_1) \dots y_{i_j}(z^{c_j} t_j) \cdot B^+
\end{equation*}
is a 1-parameter family in $\mathcal R_{e,wv'^{-1};>0}$ whose limit point
as $z\to \infty$ again lies in $\mathcal R_{e,wv'^{-1};>0}$.  However, suppose that
one of the $c_1,\dots, c_j$ is nonzero.  Then taking the limit would certainly give
something that left the cell $\mathcal R_{e,wv'^{-1};>0}$ and went to a smaller one.
So all of these $c_i$ must be zero.

Given that the $c_i$ are zero for $i \leq j$, it is clear that $c_{k}$ must be 
positive for the limit of the original family to lie in $\mathcal R_{v',w;>0}$.
\end{proof}

\begin{remark} \label{r:family}  Let $C=(c_{1},\dots,c_{j},c_k)\in \Z^{j+1}$ be the
sequence from above. Then the 
$1$-parameter family in Proposition
\ref{step1} may also be written as  
$$g_z \cdot B^+=\phi^{>0}_{\mathbf v_+,\mathbf w}(z^C\cdot\mathbf t),$$
where $z>0$ and $z^C\cdot \mathbf t =(z^{c_{1}}t_{1},\dotsc, z^{c_{j}}t_j,z^{c_{k}}t_k)$,
 and where
$\phi^{>0}_{\mathbf v_+,\mathbf w}$ is the parameterization from
Section~\ref{s:param}.
\end{remark}

\begin{proposition}\label{step2}
Suppose that $w_0>v'\gtrdot v$, and choose any
reduced expression 
$\mathbf w_0 = (i_1, \dotsc, i_n)$.  
Let $\phi_{\mathbf v_+,\mathbf w_0}^{>0}$ 
and $\mathbf v^c_+$ be as defined in Section 3.2,
and write $\mathbf v^c_+ = \{h_1,\dots,h_r\}$ 
for $h_1<\cdots<h_r$.
There is a unique (up to positive scalar multiple)
choice of sequence $C=(c_{h_1},\dots, c_{h_r}) 
\in \Z^{r}$ such that, 
for $z>0$ and 
for some/any 
fixed $t_{h_1}, \dots ,t_{h_r}  \in \R_{>0}$ the $1$-parameter family in $\mathcal R_{v,w_0;>0}$,
$$
g_z\cdot B^+=\phi^{>0}_{\mathbf v_+,\mathbf w_0}(z^C\cdot\mathbf t),
$$
 tends as $z\to \infty$ to an element of $\mathcal R_{v',w_0;>0}$.
Here $z^C\cdot \mathbf t =(z^{c_{h_1}}t_{h_1},\dotsc, z^{c_{h_r}}t_{h_r})$.
\end{proposition}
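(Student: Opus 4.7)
The plan is to reduce Proposition \ref{step2} to the good-pair case already treated in Proposition \ref{step1}, via a judicious change of reduced expression, and then transfer the answer back using the subtraction-free braid-relation formulas of Section \ref{s:coord}.

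\textbf{Reduction to the good case.} Since $v \lessdot v' \leq w_0$, I would first choose a reduced expression $\tilde{\mathbf w}_0 = (\tilde i_1,\dotsc,\tilde i_n)$ for $w_0$ such that $\tilde i_{j+1}\dotsc \tilde i_n$ is a reduced expression for $v'$ (where $j = \ell(w_0) - \ell(v')$). Because $\ell(v) = \ell(v') - 1$, any reduced subword for $v$ inside the tail $\tilde i_{j+1}\dotsc \tilde i_n$ is obtained by omitting a single factor $s_{\tilde i_k}$. Since $v' \leq w_0$, the rightmost subexpression for $v$ in $\tilde{\mathbf w}_0$ lies entirely inside this tail, and hence has this form, so the pair $(v, v')$ is automatically \emph{good} with respect to $\tilde{\mathbf w}_0$ in the sense of Theorem \ref{regularity}: $\tilde{\mathbf v}_+^c = \{1,\dotsc,j\}\cup\{k\}$ and $\tilde{\mathbf v}_+' = \tilde{\mathbf v}_+\cup\{k\}$. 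Remark \ref{r:limit} combined with Proposition \ref{almostregularface} then shows that taking $\tilde C = (0,\dotsc,0,1)$ (nonzero in position $k$) produces a $1$-parameter family whose limit as $z\to\infty$ lies in $\mathcal R_{v',w_0;>0}$, establishing existence for $\tilde{\mathbf w}_0$.

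\textbf{Transfer via braid moves.} Connect $\tilde{\mathbf w}_0$ to the given $\mathbf w_0$ by a sequence of commuting and braid moves. The corresponding change of positive coordinates $F$ is an invertible subtraction-free rational transformation by Section \ref{s:coord}, and $\phi^{>0}_{\mathbf v_+,\mathbf w_0}\circ F = \phi^{>0}_{\tilde{\mathbf v}_+,\tilde{\mathbf w}_0}$. If we substitute $\tilde{\mathbf t} \mapsto z^{\tilde C}\cdot \tilde{\mathbf t}$, then each coordinate of $F(z^{\tilde C}\cdot \tilde{\mathbf t})$ is a ratio of polynomials with positive coefficients in $z$ and the $\tilde t_i$, and so has a well-defined leading power $z^{c_i}$ (with strictly positive leading coefficient $\mathbf t_i(\tilde{\mathbf t})$, with no risk of cancellation). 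This defines a piecewise-linear ``tropical'' map $\tilde C \mapsto C$ on exponent sequences, whose inverse is the tropicalization of $F^{-1}$. Since $F(z^{\tilde C}\cdot \tilde{\mathbf t})_i = z^{c_i}\mathbf t_i(1 + o(1))$ as $z\to\infty$, continuity of the parametrization implies that $\phi^{>0}_{\mathbf v_+,\mathbf w_0}(z^C\cdot\mathbf t)$ and $\phi^{>0}_{\tilde{\mathbf v}_+,\tilde{\mathbf w}_0}(z^{\tilde C}\cdot \tilde{\mathbf t})$ share the same limit in $G/B$, which lies in $\mathcal R_{v',w_0;>0}$. This gives existence for $\mathbf w_0$.

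\textbf{Uniqueness.} If $C, C' \in \Z^r$ both yield a limit in $\mathcal R_{v',w_0;>0}$, pull them back through the tropicalized braid moves to obtain $\tilde C, \tilde C'$; these too must produce limits in $\mathcal R_{v',w_0;>0}$ for $\tilde{\mathbf w}_0$. Proposition \ref{step1} applied to the good expression $\tilde{\mathbf w}_0$ then forces $\tilde C = (0,\dotsc,0,a)$ and $\tilde C' = (0,\dotsc,0,a')$ with $a, a' > 0$, which are positive scalar multiples of one another. Since replacing $C$ by $\lambda C$ corresponds to the reparametrization $z \mapsto z^\lambda$, the tropical map is homogeneous of degree one, so $C$ and $C'$ must also be positive scalar multiples.

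\textbf{Main obstacle.} The most delicate point is verifying that the leading-order tropicalization of each subtraction-free braid-move formula in Section \ref{s:coord} is well-defined and invertible on $\Z^r$, with no spurious cancellations of leading terms, and that after composition the induced map faithfully transports the ``limit in $\mathcal R_{v',w_0;>0}$'' property from $\tilde{\mathbf w}_0$ to $\mathbf w_0$. A secondary but cleaner concern is confirming that the rightmost subexpression for $v$ in $\tilde{\mathbf w}_0$ really does fall entirely inside the $v'$-tail (this follows from $v \leq v'$ and the greedy right-to-left matching that defines the positive subexpression).
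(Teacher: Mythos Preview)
Your proposal is correct and follows essentially the same route as the paper's own proof: both start from a reduced expression for $w_0$ ending in $v'$, invoke Proposition~\ref{step1} there for uniqueness (with existence coming from the argument behind \eqref{e:limitmap}/Proposition~\ref{almostregularface}), and then transport the result to an arbitrary $\mathbf w_0$ via the subtraction-free braid transformations of Section~\ref{s:coord}, reading off the new exponent vector $C$ as the piecewise-linear ``tropicalization'' of the old one. The only cosmetic difference is that the paper proceeds inductively one braid or commuting move at a time (illustrated with the $S_3$ example), whereas you compose all the moves into a single map $F$ and tropicalize once; for uniqueness both you and the paper run the tropical map backwards and appeal to the base case, using the degree-one homogeneity of these transformations.
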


\begin{proof}
Let us first assume that
$\mathbf w_0 = (i_1, \dotsc ,i_n)$ ends with a reduced expression for $v'$.
Then we are in the situation of Proposition \ref{step1}, and so in terms of the coordinates
of the parameterization $\phi^{>0}_{\mathbf v_+,\mathbf w'_0}$, there is a unique 
vector $C\in \mathbb Z^{r}$, up to positive scalar multiple, giving a 1-parameter family $g_z\cdot B^+=
\phi^{>0}_{\mathbf v_+,\mathbf w_0}(z^C\cdot\mathbf t)$, 
whose limit point lies in $\mathcal R_{v',w_0;>0}$.

Recall that any two reduced expressions for $w_0$ can be related by braid and 
commuting relations, and suppose now that $\mathbf w_0$ is an arbitrary reduced expression 
for $w_0$. It suffices to prove that if the statement of the Proposition holds
for $\mathbf w_0$ then it also holds for any $\mathbf w_0'$ obtained from 
$\mathbf w_0$ by a braid relation or commuting relation.  

This is obvious in the case of a commuting relation. 
Now suppose $\mathbf w_0$ 
and $\mathbf w_0'$ are related by a more general braid relation,
and $C$ is the vector associated (up to positive
scalar multiple) to $\mathbf w_0$. 
The braid relation gives us a change of coordinates
$\kappa(\mathbf t)=\mathbf t'$ 
which is rational, homogeneous and subtraction-free,
see Section \ref{s:coord}. 
We let $\mathbf t'_z:=\kappa(z^{C}\cdot \mathbf t)$.
For example, if $w_0$ is the longest element of the symmetric 
group $S_3$, then applying Formula (1) of Section \ref{s:coord} to 
$y_1(z^{c_1} t_1) y_2(z^{c_2} t_2) y_3(z^{c_3} t_3) \cdot B^+$,
gives 
\begin{equation*}y_2\left(\frac{z^{c_2+c_3} t_2 t_3}{z^{c_1}t_1+z^{c_3}t_3}\right)\
 y_1\left(z^{c_1}t_1+z^{c_3}t_3\right) \
 y_2\left(\frac{z^{c_1+c_2}t_1 t_2}{z^{c_1}t_1+z^{c_3}t_3}\right)\cdot B^+,
\end{equation*}
and the  entries are the components of $\mathbf t'_z$. Because the 
components of $\mathbf t'_z$ are subtraction-free, the maximal power
of $z$ in each one dominates the limit as $z\to\infty$.
In this example, we have therefore
\begin{multline*}
\lim_{z\to \infty}(\phi^{>0}_{1,s_2s_1s_2}(\mathbf t'_z))\\ =
\lim_{z\to\infty}(\phi^{>0}_{1,s_2s_1s_2}(z^{c_2+c_3-\max(c_1,c_3)}
q_1(\mathbf t),z^{\max(c_1,c_3)}q_2(\mathbf t),z^{c_1+c_2-\max(c_1,c_3)}q_3(\mathbf t)))
\end{multline*}
where the $q_i(\mathbf t)$ are new rational, subtraction-free functions in the $t_j$'s. We define $C':=(c_2+c_3-\max(c_1,c_3),\max(c_1,c_3),c_1+c_2-\max(c_1,c_3))$.

The same procedure can be applied in the general case 
to define a $C'$ out of the original $C$ as well as new 
rational, subtraction-free functions $q_i$
defining $q(\mathbf t)=(q_1(\mathbf t),\dotsc, q_r(\mathbf t))$
such that 
\begin{equation*}
\lim_{z\to \infty}(\phi^{>0}_{\mathbf v_+,\mathbf w_0'}(\mathbf t'_z)) =
\lim_{z\to\infty}\left (\phi^{>0}_{\mathbf v_+,\mathbf w_0'}\left (z^{C'}\cdot q(\mathbf t)\right )\right).
\end{equation*}
We note that $C'$ is also in general related to the original $C$
by a piecewise linear transformation, which one may 
compare to the {\it zones}
and the map $R_j^{j'}$ from Sections~9.1 and 9.2 of 
\cite{Lusztig3}.

We now see by testing out on a point 
$\phi^{>0}_{\mathbf v_+,\mathbf w_0'}(\mathbf t')\in \mathcal R_{v,w_0;>0}$, 
where $\mathbf t'=q(\mathbf t)$, that 
$$
\lim_{z\to\infty}(\phi^{>0}_{\mathbf v_+,\mathbf w_0'}(z^{C'}\cdot \mathbf t'))=
\lim_{z\to \infty}(\phi^{>0}_{\mathbf v_+,\mathbf w_0'}(\mathbf t'_z)) =
\lim_{z\to \infty}(\phi^{>0}_{\mathbf v_+,\mathbf w_0}(z^C\cdot\mathbf t)), 
$$
and lies in $\mathcal R_{v',w_0;>0}$. Therefore we have 
found a $C'\in \mathbb Z^r$ with the required property 
for our new reduced 
expression $\mathbf w_0'$.  

To prove uniqueness, suppose $D'\in \mathbb Z^r$ is a different element such that 
$$
\lim_{z\to\infty}(\phi^{>0}_{\mathbf v_+,\mathbf w_0'}(z^{D'} \cdot\mathbf t'))\in \mathcal R_{v',w_0;>0}
$$
 for some $\mathbf t'\in \R_{>0}^{r}$. Then we may apply the coordinate transformation
 back from the reduced expression $\mathbf w_0'$ to $\mathbf w_0$. Thus $D'$ is
 transformed by a piecewise-linear transformation to a $D\in \Z^r$ 
 such that  
$$
\lim_{z\to\infty}(\phi^{>0}_{\mathbf v_+,\mathbf w_0}(z^{D} \cdot\mathbf t))\in \mathcal R_{v',w_0;>0}.
$$
However this implies that $D$ is a positive multiple of $C$, and 
by applying the original transformation again, that $D'$ was a positive multiple of $C'$.
\end{proof}

\begin{proposition}\label{step3}
Choose $w> v' \gtrdot v$ and 
a reduced expression $\mathbf w = (i_1, \dotsc, i_m)$.
Let $\phi_{\mathbf v_+,\mathbf w}^{>0}$ 
and $\mathbf v^c_+$ be as defined in Section 3.2,
and write $\mathbf v^c_+ = \{h_1,\dots,h_r\}$ 
for $h_1<\cdots<h_r$.
 Suppose  in addition that $\mathbf v'_+$ is equal to 
$\mathbf v_+ \cup \{h_r \}$.
For $C=(c_{h_1},\dots,c_{h_r})\in \Z^r$ and $z>0$
we consider
the $1$-parameter family 
$$
g_z \cdot B^+=\phi_{\mathbf v_+,\mathbf w}^{>0}(z^{C}\cdot \mathbf t)
$$ 
in $\mathcal R_{v,w}^{>0}$, where $z^C\cdot \mathbf t =(z^{c_{h_1}}t_{h_1},\dotsc, z^{c_{h_r}}t_{h_r})$.
Then if $g_z \cdot B^+$ 
in $\mathcal R_{v,w;>0}$ tends as $z\to \infty$ to an element of $\mathcal R_{v',w;>0}$, 
we must have $c_{h_1} = \dots = c_{h_{r-1}} = 0$ and $c_{h_r} >0$.
\end{proposition}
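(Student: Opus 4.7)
The plan is to adapt the projection argument from Proposition~\ref{step1} by applying two nested projections. Since $h_r$ is the maximum of $\mathbf v^c_+$, all positions $h_r+1,\dots,m$ lie in $\mathbf v_+$, so we may write $v=v_1v_2$ with $\ell(v)=\ell(v_1)+\ell(v_2)$, where $v_1$ is the product of $s_{i_{j_l}}$ over the $j_l\in\mathbf v_+$ with $j_l<h_r$ and $v_2=s_{i_{h_r+1}}\cdots s_{i_m}$; similarly $v'=v_1s_{i_{h_r}}v_2$ with $\ell(v')=\ell(v)+1$. The positive subexpression condition applied at the largest index of $\mathbf v_+$ below $h_r$ guarantees that $s_{i_{h_r}}$ is a right ascent of $v_1$.

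First I would apply $\pi_1=\pi^w_{wv_2^{-1}}$, which truncates the last $\ell(v_2)$ factors of any parameterization (as in the proof of Proposition~\ref{step1}). This sends $\mathcal R_{v,w;>0}$ onto $\mathcal R_{v_1,wv_2^{-1};>0}$ and $\mathcal R_{v',w;>0}$ onto $\mathcal R_{v_1s_{i_{h_r}},wv_2^{-1};>0}$, and transforms the family into
$$\pi_1(g_z)=g_1(z)\cdots g_{h_r-1}(z)\cdot y_{i_{h_r}}(z^{c_{h_r}}t_{h_r})\cdot B^+.$$
Next I would apply $\pi_2=\pi^{wv_2^{-1}}_{wv_2^{-1}s_{i_{h_r}}}$, which is well defined because $s_{i_{h_r}}$ is a right descent of $wv_2^{-1}$. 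The crucial observation is that $s_{i_{h_r}}$ is simultaneously a right ascent of $v_1$ and a right descent of $v_1s_{i_{h_r}}$, so $\pi_2$ collapses both cells produced by $\pi_1$ into the same cell $\mathcal R_{v_1,wv_2^{-1}s_{i_{h_r}};>0}$. The composition yields
$$\pi_2\pi_1(g_z)=g_1(z)\cdots g_{h_r-1}(z)\cdot B^+=\phi^{>0}_{\mathbf v_{1,+},(i_1,\dots,i_{h_r-1})}\bigl(z^{c_{h_1}}t_{h_1},\dots,z^{c_{h_{r-1}}}t_{h_{r-1}}\bigr).$$

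By continuity of $\pi_2\pi_1$, the hypothesis $\lim_{z\to\infty}g_z\in\mathcal R_{v',w;>0}$ forces $\lim_{z\to\infty}\pi_2\pi_1(g_z)\in\mathcal R_{v_1,wv_2^{-1}s_{i_{h_r}};>0}$ (the open cell). Since $\phi^{>0}_{\mathbf v_{1,+},(i_1,\dots,i_{h_r-1})}$ is a homeomorphism onto this open cell by Theorem~\ref{t:param}, the preimage curve $(z^{c_{h_1}}t_{h_1},\dots,z^{c_{h_{r-1}}}t_{h_{r-1}})$ must converge to a point in $\R_{>0}^{r-1}$, which forces $c_{h_s}=0$ for every $s<r$. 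This mirrors the conclusion of Proposition~\ref{step1}.

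With these vanishing, $g_z$ depends on $z$ only through $s:=z^{c_{h_r}}t_{h_r}$. If $c_{h_r}=0$ then $g_z$ is constant inside $\mathcal R_{v,w;>0}$, contradicting the hypothesis. If $c_{h_r}<0$ then $s\to 0$, so $y_{i_{h_r}}(s)\to e$ and the limit equals $g_1^{\mathrm{const}}\cdots g_{h_r-1}^{\mathrm{const}}\cdot\dot v_2\cdot B^+$. Since $g_1^{\mathrm{const}}\cdots g_{h_r-1}^{\mathrm{const}}\in B^+\dot{(wv_2^{-1}s_{i_{h_r}})}B^+$, this product lies in some Bruhat cell $B^+\dot{w''}B^+$ with $\ell(w'')\le\ell(wv_2^{-1}s_{i_{h_r}})+\ell(v_2)=\ell(w)-1<\ell(w)$, so it cannot lie in $\mathcal R_{v',w;>0}\subset B^+\dot wB^+$, a contradiction. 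Hence $c_{h_r}>0$. The main obstacle is engineering $\pi_2$ to identify two otherwise distinct cells; this is possible precisely because $h_r$ is the maximum of $\mathbf v^c_+$, which ensures both that $s_{i_{h_r}}$ is a right ascent of $v_1$ and that it is the last letter of a reduced expression for $wv_2^{-1}$.
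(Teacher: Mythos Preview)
Your argument is correct and takes a genuinely different route from the paper.  The paper proceeds indirectly: it extends $\mathbf w$ to a reduced expression $\mathbf w_0$ for the longest element by prepending factors, invokes Proposition~\ref{step2} (whose proof relies on the change of coordinates under braid relations and the tropical/piecewise-linear transformations between zones) to conclude that the vector $C$ is unique up to positive scalar, and then verifies that $(0,\dots,0,1)$ works via the limit computation from Proposition~\ref{almostregularface}.  You instead bypass Proposition~\ref{step2} entirely by composing two projections $\pi_2\pi_1$ tailored to the ``good'' hypothesis $\mathbf v'_+=\mathbf v_+\cup\{h_r\}$, reducing directly to a one-parameter family that stays inside a single open cell; the homeomorphism of Theorem~\ref{t:param} then forces the first $r-1$ exponents to vanish, and an elementary Bruhat-length count handles $c_{h_r}$.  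The key observation making your argument work is that the truncation $\mathbf v_+\cap\{1,\dots,h_r-1\}$ is again the positive subexpression for $v_1$ in $(i_1,\dots,i_{h_r-1})$, which follows from the defining ascent condition for $\mathbf v_+$; this is what guarantees that $\pi_2\pi_1(g_z)$ is parameterized by $\phi^{>0}_{\mathbf v_{1,+},(i_1,\dots,i_{h_r-1})}$ and hence lands in the expected cell.

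What each approach buys: the paper's route establishes the stronger intermediate statement (Proposition~\ref{step2}) that the limiting direction $C$ is unique for \emph{any} reduced expression of $w_0$, at the cost of invoking the subtraction-free braid-move calculus.  Your route is more elementary and self-contained for the specific ``good'' situation needed in Theorem~\ref{regularity}, using only the projection maps $\pi^w_{w'}$ and standard Bruhat combinatorics, and it makes transparent why the hypothesis that $h_r$ is the \emph{last} free position is exactly what is required.
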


\begin{remark} Note that the condition on $\mathbf v_+'$ in Proposition~5.13 is 
equivalent to the pair of cells $\mathcal R_{v,w;>0}\gtrdot \mathcal R_{v',w;>0}$ being 
good with respect to $\mathbf w$ in the sense of Theorem~\ref{regularity}.
\end{remark}
\begin{proof}
Suppose $C=(c_{h_1},\dotsc, c_{h_r})\in\Z^r$
has the property,
\begin{equation}\label{e:limit}
g_z\cdot B^+ \text{ has limit in $\mathcal R_{v',w;>0}$ as $z\to \infty$.}
\end{equation}
Choose a reduced expression $\mathbf w_0=(j_1, \dotsc, j_{n-m}, i_1, \dots , {i_m})$ 
for $w_0$ ending with $\mathbf w$, and let us fix $u_1,\dotsc, u_{n-m}\in\R_{>0}$. Then 
we obtain a new one-parameter family,
$$
y_{j_1}(u_1) \dots y_{j_{n-m}}(u_{n-m}) g_z \cdot B^+,
$$
which lies in $\mathcal R_{v,w_0;>0}$ for $z>0$ and tends to an element in 
$\mathcal R_{v',w_0;>0}$ as $z\to \infty$. Now Proposition \ref{step2} is applicable
and we have that $\tilde C=(0,\dotsc, 0, c_{h_1},c_{h_2},\dotsc, c_{h_r})$ is the unique
(up to positive scalar multiple) choice of $\tilde C\in \Z^{n-m+r}$
such that the corresponding 1-parameter family in  $\mathcal R_{v,w_0;>0}$
tends to a point in $\mathcal R_{v',w_0;>0}$. It follows that the original 
$r$-tuple $(c_{h_1},\dotsc, c_{h_r})$
satisfying \eqref{e:limit} is also uniquely determined up to positive
scalar multiple.

Now it only remains to prove that \eqref{e:limit} holds for 
$(c_{h_1},\dotsc, c_{h_{r-1}},c_{h_r})=(0,\dotsc, 0,1)$.
But this is clear, by the same argument  we used for
 \eqref{e:limitmap} in the proof of
 Proposition~\ref{almostregularface}.\end{proof}

We now turn to the proof of Theorem \ref{regularity}

\begin{proof}[Proof of Theorem~\ref{regularity}]
We begin with the full flag variety case.
Recall the natural inclusion $X^{>0}_{\mathbf v'_+, \mathbf w}\hookrightarrow X^{\geq 0}_{\mathbf v_+, \mathbf w}$, given by Proposition~\ref{almostregularface}, for which
$\Phi^{\geq 0}_{\mathbf v_+ ,\mathbf w}( X^{>0}_{\mathbf v'_+, \mathbf w})=\mathcal R_{v',w;>0}$.
By Remark \ref{remains}, it suffices to prove the claim that
\begin{equation}\label{e:preimage}
(\Phi^{\geq 0}_{\mathbf v_+,\mathbf w})^{-1}(\mathcal R_{v',w;>0})=X^{>0}_{\mathbf v'_+, \mathbf w}.
\end{equation}

Suppose we have $x'\in X^{\geq 0}_{\mathbf v_+, \mathbf w}$ such that
 $\Phi^{\geq 0}_{\mathbf v_+,\mathbf w}(x')\in\mathcal R_{v',w;>0}$.
 We can approach $x'$ from a point  in the interior, $X^{>0}_{\mathbf v_+, \mathbf w}$,
 by a 1-parameter family. 
 Namely,  
 $$
 x'=\lim_{z\to \infty}\chi(z^C\cdot\mathbf t)=
 \lim_{z\to \infty}\chi(z^{c_{1}} t_{1},\dotsc, z^{c_{r}} t_{r}),
 $$
for some $\mathbf t\in\R_{>0}^r$, $C\in\Z^{r}$ and $\chi$ the map 
from \eqref{e:chi} associated to $X_{\mathbf v_+,\mathbf w}$.  
Therefore
$$
\Phi^{\geq 0}_{\mathbf v_+,\mathbf w}(x')= 
\lim_{z\to\infty}\Phi^{>0}_{\mathbf v_+,\mathbf w}
(\chi(z^C\cdot\mathbf t))=\lim_{z\to\infty}
\phi_{\mathbf v_+,\mathbf w}^{>0}(z^{C}\cdot \mathbf t),
$$
where the $1$-parameter subgroup on the right hand side is 
as in Proposition~\ref{step3}.
Since by our assumption $\Phi^{\geq 0}_{\mathbf v_+,\mathbf w}(x')\in
\mathcal R_{v',w;>0}$, and we are in the `good' situation,
Proposition~\ref{step3} tells us that 
$C=(0,\dotsc,0,c)$, for positive $c$. But this implies
that 
$$
x'=\lim_{z\to\infty}\chi(z^C\cdot \mathbf t)\in X_{\mathbf v'_+,\mathbf w}^{>0},
$$
see Remark~\ref{r:limit}. Therefore the claim \eqref{e:preimage} holds
and the Theorem is true for the full flag variety.

Now consider the case of $G/P_J$.  We have that 
$\pi^J$ gives an isomorphism from $\mathcal R_{xu^{-1},w;>0}$ to $\mathcal P_{x,u,w;>0}^J$ and from
$\mathcal R_{x'{u'}^{-1},w;>0}$ to $\mathcal P_{x',u',w;>0}^J$.  
We've already proved that 
$\mathcal R_{x'{u'}^{-1},w;>0}$ is a regular face of 
$\mathcal R_{x{u}^{-1},w;>0}$ with respect to the attaching map 
$\Phi^{\geq 0}_{\mathbf{x u^{-1}_+},\mathbf w}$.
Recall that the attaching map for $\mathcal P_{x,u,w;>0}$ is simply
$\pi^J \circ
\Phi^{\geq 0}_{\mathbf{x u^{-1}_+},\mathbf w}$.

When we restrict 
$\pi^J$ to $\overline{\mathcal R_{xu^{-1},w;>0}}$, it is straightforward to check that the preimage 
of $\mathcal P_{x',u',w;>0}^J$ is $\mathcal R_{x'{u'}^{-1},w;>0}$.  By Theorem 
\ref{regularity}  in the full flag variety case, we know that
$(\Phi^{\geq 0}_{\mathbf{xu^{-1}_+},\mathbf w})^{-1}(\mathcal R_{x'{u'}^{-1},w;>0})=
X^{>0}_{\mathbf{x'{u'}^{-1}_+}, \mathbf w}$ and 
$\Phi^{\geq 0}_{\mathbf{xu^{-1}_+},\mathbf w}$ is a homeomorphism from 
$X^{>0}_{\mathbf{x'{u'}^{-1}_+}, \mathbf w}$ to $\mathcal R_{x'{u'}^{-1},w;>0}$.
Therefore 
$(\pi^J \circ \Phi^{\geq 0}_{\mathbf{xu^{-1}_+},\mathbf w})^{-1}(\mathcal R_{x'{u'}^{-1},w;>0})=
X^{>0}_{\mathbf{x'{u'}^{-1}_+}, \mathbf w}$ and 
$\pi^J \circ \Phi^{\geq 0}_{\mathbf{xu^{-1}_+},\mathbf w}$ is a homeomorphism from 
$X^{>0}_{\mathbf{x'{u'}^{-1}_+}, \mathbf w}$ to $\mathcal R_{x'{u'}^{-1},w;>0}$.
It follows that 
$\mathcal P_{x',u',w; >0}^J$ is a regular face of 
$P_{x,u,w; >0}^J$ with respect to the attaching map 
$\pi^J \circ
\Phi^{\geq 0}_{\mathbf{x u^{-1}_+},\mathbf w}$.
\end{proof}

\section{Preliminaries on poset topology}\label{PosetTopology}

\subsection{Preliminaries}
Poset topology is the study of combinatorial properties of a partially
ordered set, or {\it poset}, which reflect the topology of an
associated simplicial or cell complex.  
In this section we will
review some of the basic definitions and results of poset topology.

Let $P$ be a poset with order relation $<$.  We will use the symbol
$\lessdot$ to denote a {\it covering relation} in the poset: $x \lessdot
y$ means that $x < y$ and there is no $z$ such that $x < z < y$.
Additionally, if $x < y$ then $[x,y]$ denotes the {\it closed
interval}
from $x$ to $y$; that is, the set $\{z \in P \ \vert \ x \leq z \leq
y \}$.

We will often identify a poset $P$ with its {\it Hasse diagram}, which
is the graph whose vertices represent elements of $P$ and whose 
edges depict covering relations.

The natural geometric object associated to a poset $P$ is
the realization of its {\it order complex} (or {\it nerve}). The
order complex $\Delta(P)$ is the simplicial complex
whose vertices are the elements of $P$ and whose simplices are the
chains $x_0 < x_1 < \dots < x_k$ in $P$.  

A poset is called
{\it bounded} if it has a least element $\hat{0}$ and a greatest element
$\hat{1}$.  
The {\it atoms} of a bounded poset are the elements
which cover $\hat{0}$.  Dually, the {\it coatoms} are the elements which
are covered by $\hat{1}$.
A finite
poset is said to be {\it pure} if all maximal
chains have the same length, and {\it graded}, if in addition, it is
bounded.  An element $x$ of a graded poset $P$ has a well-defined
{\it rank} $\rho(x)$ equal to the length of an
unrefinable chain from $\hat{0}$ to $x$ in $P$.
A poset $P$ is called {\it thin} if every interval of length $2$
is a {\it diamond},
i.e. if for any $p < q$ such that $\rank(q)-\rank(p) = 2$,
there are exactly two elements in the open interval $(p,q)$.

\subsection{Shellability and edge-labelings}
A pure finite simplicial complex $\Delta$ is said to be {\it shellable}
if its maximal faces can be ordered $F_1, F_2, \dots , F_n$ in such
a way that
$F_k \cap (\cup_{i=1}^{k-1} F_i)$ is a nonempty union of maximal proper
faces of $F_k$ for $k=2, 3, \dots , n$.
Certain edge-labelings of posets can be used to prove that
the corresponding order complexes are shellable. These
techniques were pioneered by Bjorner \cite{B1}, 
and Bjorner and Wachs \cite{BW1}.

One technique that can be used to prove that an order complex
$\Delta(P)$ is shellable is the notion of
{\it lexicographic shellability}, or
{EL-shellability}, which was first introduced by Bjorner \cite{B1}.
Let $P$ be a graded poset, and let
$\E(P)$ be the set of edges of the Hasse diagram of $P$, i.e.
$\E(P) = \{(x,y) \in P \times P\ \vline \ x \gtrdot y\}$.
An {\it edge labeling} of $P$ is a map
$\lambda : \E(P) \to \Lambda$ where $\Lambda$ is some poset
(usually the integers).  Given an edge labeling $\lambda$,
each maximal chain $c = (x_0 \gtrdot x_1 \gtrdot \dots \gtrdot x_k)$ of
length $k$ can be associated with a $k$-tuple
$\sigma(c) = (\lambda(x_0,x_1), \lambda(x_1, x_2), \dots ,
\lambda(x_{k-1}, x_k))$.  We say that $c$ is an {\it increasing
chain} if the $k$-tuple $\sigma(c)$ is increasing; that is,
if $\lambda(x_0, x_1) \leq \lambda(x_1, x_2) \leq \dots \leq
\lambda(x_{k-1}, x_k)$.  The edge labeling allows us
to order the maximal chains of any interval of $P$ by
ordering the corresponding $k$-tuples lexicographically.
If $\sigma(c_1)$ lexicographically precedes
$\sigma(c_2)$ then we say that $c_1$ lexicographically precedes
$c_2$ and we denote this by $c_1  <_L c_2$.

\begin{definition}
An edge labeling is called an {\it EL-labeling} ({\it edge lexicographical
labeling}) if for every interval $[x,y]$ in $P$,
\begin{enumerate}
\item there is a unique increasing maximal chain $c$ in $[x,y]$, and
\item $c <_L c'$ for all other maximal chains $c^{\prime}$ in $[x,y]$.
\end{enumerate}
\end{definition}

If one has an EL-labeling of $P$, it is not hard to see 
that the corresponding order on maximal chains gives a shelling of the order complex \cite{B1}.
Therefore a
graded poset that admits an EL-labeling is said to be {\it EL-shellable}.

Given an EL-labeling $\lambda$ of $P$ and $x\in P$, we define
$\Last_{\lambda}(x)$ to be the set of elements $z\lessdot x$ such that 
$\lambda(z \lessdot x)$ is maximal among the set
$\{ \lambda(y\lessdot x) \ \vert \ y \lessdot x \}$.

\subsection{Face posets of cell complexes}\label{s:CW}
When analyzing a CW complex $\K$, 
it is sometimes useful to 
study its {\it face poset} $\F(\K)$, 
as in Definition \ref{faceposet}.
The face poset is a natural poset to study
particularly if the CW complex has the {\it subcomplex property}, 
i.e. if
the closure of a cell is a union of cells.  

The class of regular CW complexes is particularly nice.
Recall that 
a CW complex is {\it regular} if the closure
of each cell  is homeomorphic to a closed ball and if
additionally the closure minus the interior of a 
cell is homeomorphic to a sphere.
In general, the order complex $\Vert
\F(\K)  \Vert$ does not reveal the topology of $\K$.
However, the following result shows that regular CW complexes are
combinatorial objects in the sense that the incidence relation of
cells determines their topology.

\begin{proposition} \cite[Proposition~4.7.8]{RedBook}\label{RedTheorem}
Let $\K$ be a regular CW complex.  Then
$\K$ is homeomorphic to
$\Vert \F(\K)  \Vert$.
\end{proposition}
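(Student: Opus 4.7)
The plan is to build the homeomorphism cell by cell, essentially realising $\Vert \F(\K) \Vert$ as a combinatorial model for the barycentric subdivision of $\K$. For each cell $\sigma$ of $\K$ I would produce a homeomorphism
\[
h_\sigma : \Vert \F(\K)_{\leq \sigma} \Vert \;\longrightarrow\; \overline{\sigma},
\]
by induction on $\dim \sigma$, arranging that whenever $\tau \leq \sigma$ the map $h_\sigma$ restricts on $\Vert \F(\K)_{\leq \tau} \Vert$ to $h_\tau$. Once such a compatible family is built, the $h_\sigma$ glue to a homeomorphism $\Vert \F(\K) \Vert \to \K$: every simplex of the order complex is a chain with a maximum element and so lies in some $\Vert \F(\K)_{\leq \sigma} \Vert$, and every point of $\K$ lies in some $\overline{\sigma}$.

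The base case $\dim \sigma = 0$ is trivial. For the inductive step, I would first observe
\[
\Vert \F(\K)_{<\sigma} \Vert \;=\; \bigcup_{\tau \lessdot \sigma} \Vert \F(\K)_{\leq \tau} \Vert.
\]
The inductive compatibility condition makes the family $\{h_\tau\}_{\tau \lessdot \sigma}$ agree on overlaps: any chain lying in both $\Vert \F(\K)_{\leq \tau} \Vert$ and $\Vert \F(\K)_{\leq \tau'} \Vert$ has its maximum equal to some $x$ with $x \leq \tau$ and $x \leq \tau'$, and on the simplex corresponding to that chain both $h_\tau$ and $h_{\tau'}$ restrict to $h_x$. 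Thus the $h_\tau$ glue to a homeomorphism $g : \Vert \F(\K)_{<\sigma} \Vert \to \bigcup_{\tau \lessdot \sigma} \overline{\tau}$. The subcomplex property of regular CW complexes, together with regularity of $\sigma$, identifies the right-hand side with $\partial \overline{\sigma}$, a topological sphere of dimension $\dim \sigma - 1$.

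To conclude, note that $\Vert \F(\K)_{\leq \sigma} \Vert$ is the simplicial join of the vertex $\sigma$ with $\Vert \F(\K)_{<\sigma} \Vert$, i.e.\ the cone with apex $\sigma$; and $\overline{\sigma}$, being a closed ball, is homeomorphic to the topological cone on its boundary sphere via radial coordinates under any characteristic map to $D^{\dim \sigma}$, and this identification fixes $\partial \overline{\sigma}$ pointwise. Extending $g$ by coning produces $h_\sigma$, and compatibility with the previously constructed $h_\tau$ is automatic because the cone operation acts as the identity on the base. The main delicate point is precisely this extension: regularity is used essentially both to guarantee that $\overline{\sigma}$ is a ball (so that a cone structure is available at all) and to identify its boundary with $\partial \overline{\sigma}$ in a way that makes $h_\sigma$ restrict correctly to $g$. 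Once this step is carried out, assembling the $\{h_\sigma\}$ yields the required global homeomorphism $\Vert \F(\K) \Vert \cong \K$.
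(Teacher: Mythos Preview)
The paper does not prove this proposition at all; it simply quotes it from \cite[Proposition~4.7.8]{RedBook} and moves on. So there is no ``paper's own proof'' to compare against.

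Your argument is the standard one and is essentially correct: it is the usual induction-on-skeleta proof that a regular CW complex is homeomorphic to the order complex of its face poset, realising the latter as a barycentric subdivision. The key uses of regularity---that $\overline{\sigma}$ is a closed ball and that $\partial\overline{\sigma}$ is a sphere which coincides with the union of proper faces---are identified correctly, and the cone extension is the right mechanism. One small point worth making explicit when you check injectivity of the glued map $g$: you implicitly use that $\overline{\tau}\cap\overline{\tau'}$ is a union of closures of cells (so that a common image point lies in some $\overline{\mu}$ with $\mu\le\tau,\tau'$, allowing you to invoke $h_\mu$). This ``intersection/subcomplex property'' is part of the definition of regular CW complex used in \cite{RedBook} and in Bj\"orner's work, so you are fine, but it is worth saying so rather than leaving it tacit.
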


We will call a poset $P$ a {\it CW poset} if it is the face poset of 
a regular CW complex.

There is a notion of shelling for regular
cell complexes (which is distinct from the notion of shelling of the 
order complex), due to Bjorner and Wachs.  Such a shelling is a certain ordering
on the coatoms of the face poset.  We don't need the precise definition, only 
the following result that an EL-labeling of the augmented face poset of a 
regular cell complex gives rise to a shelling.

\begin{theorem}\cite[Theorem 5.11]{BW3}\cite[Theorem 13.2]{BW4}\label{CL-RAO}
If $P$ is the augmented face poset of a finite-dimensional 
regular CW complex $K$, then any EL-labeling
of $P$ gives rise to a shelling of $K$.  To go from the EL-labeling to the shelling
one chooses the ordering on coatoms which is specified by
the order on edges between the unique greatest element and the coatoms.
\end{theorem}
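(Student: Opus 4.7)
The plan is to construct the shelling of $K$ directly from the EL-labeling $\lambda$ in two stages. Following the theorem's prescription, I would first list the coatoms of $P$ (the facets of $K$) as $F_1, \ldots, F_n$ so that the values $\lambda(F_i \lessdot \hat{1})$ are weakly increasing in $i$, breaking ties using the lexicographic order induced by $\lambda$ on the unique increasing maximal chain of $[\hat{0}, F_i]$ extended by the edge $F_i \lessdot \hat{1}$. Uniqueness of increasing chains in every interval of $P$ makes this a well-defined linear order.

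Next, for each $k \geq 2$ I would verify the shelling axiom: that $\overline{F_k} \cap \bigcup_{i<k} \overline{F_i}$ is a nonempty, pure codimension-one subcomplex of $\partial F_k$. Because $K$ is regular, this intersection is automatically a closed subcomplex of $\overline{F_k}$, so the content lies in describing it combinatorially. A codimension-one face $G \lessdot F_k$ lies in the intersection exactly when some facet $F' \supseteq G$ precedes $F_k$ in the order above. Applying EL-shellability to the rank-two interval $[G, \hat{1}]$, whose maximal chains are exactly the $G \lessdot F \lessdot \hat{1}$ for facets $F$ containing $G$, I would read off precisely which such $F$ can be earliest, and then a direct comparison of labels and of increasing chains determines whether this earliest facet precedes $F_k$. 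This yields an explicit combinatorial description of the restriction set $R(F_k)$ purely in terms of $\lambda$.

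The hard part will be showing that this combinatorially described $R(F_k)$ is pure of codimension one in $\partial F_k$ and nonempty for $k \geq 2$. Here the regularity of $K$ is essential: $\overline{F_k}$ is a closed ball, $\partial F_k$ is a sphere with a regular CW structure, and by Proposition \ref{RedTheorem} the open interval $(\hat{0}, F_k)$ in $P$ is a combinatorial model for $\partial F_k$. I would proceed by induction on dimension, observing that the restriction of $\lambda$ to $[\hat{0}, F_k]$ is itself an EL-labeling of the augmented face poset of $\overline{F_k}$; the inductively obtained shelling of $\overline{F_k}$ and its restriction sets then align with the trace of $R(F_k)$ on $\partial F_k$, yielding the required purity. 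Nonemptiness for $k \geq 2$ follows because any maximal chain of $P$ through $F_k$ and $\hat{1}$ other than the unique increasing one must fail to be increasing at some edge below $\hat{1}$, which forces $F_k$ to share at least one codimension-one face with an earlier facet.
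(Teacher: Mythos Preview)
The paper does not give a proof of this theorem: it is stated with citations to Bj\"orner--Wachs \cite[Theorem 5.11]{BW3} and \cite[Theorem 13.2]{BW4} and used as a black box. So there is nothing in the paper to compare your proposal against.

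For what it is worth, your sketch is in the spirit of the Bj\"orner--Wachs argument, which factors through the notion of a \emph{recursive coatom ordering}: an EL-labeling (indeed a CL-labeling) gives a recursive coatom ordering of every interval \cite[Theorem~5.11]{BW3}, and a recursive coatom ordering on the augmented face poset of a regular CW complex yields a shelling of that complex \cite[Theorem~13.2]{BW4}. Your step of restricting $\lambda$ to each $[\hat 0,F_k]$ and inducting on dimension is exactly the recursive mechanism. One point to be careful about: the ordering on coatoms is by the label $\lambda(F_i\lessdot\hat 1)$, and in an EL-labeling these labels need not be distinct, so your tie-breaking rule matters. The cleaner formulation is simply that any linear extension of the partial order given by the edge labels works, because the shelling condition for $F_k$ only depends on which codimension-one faces $G\lessdot F_k$ have a second facet $F'$ with $\lambda(F'\lessdot\hat 1)<\lambda(F_k\lessdot\hat 1)$ or, in the equal-label case, on the recursive structure one level down; this is precisely what the recursive coatom ordering packages. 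Your ``hard part'' about purity is handled in Bj\"orner--Wachs not by a separate topological argument but automatically by the recursion combined with the standard fact that shellable pseudomanifolds are balls or spheres.
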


\section{Discrete Morse theory for general CW complexes}\label{sec:DMT}

In this section we review Forman's powerful {\it discrete 
Morse theory} \cite{RF}.  The theory comes in three ``flavors": 
for simplicial complexes,  regular CW complexes, and
general CW complexes.  In each setting, one needs to 
find a certain {\it discrete Morse function}, and then
the main 
theorem says that the space in question
is homotopy equivalent to another simpler space obtained by 
collapsing non-critical cells.

The first two flavors of the theory are the 
simplest and most widely used, 
because in these two settings
a result of Chari
\cite{Chari} implies that a 
discrete Morse function is equivalent to
a matching on the face poset of the CW
complex.  To work with the third flavor of the theory,
one must check 
some additional technical
conditions: the {\it discrete Morse hypothesis}, as well 
as an extra topological condition included 
in the definition of discrete Morse function.
However, as we will see in Theorem \ref{MainMorseTheorem}, it is 
enough to find a matching on the face poset of a CW complex 
with the {\it subcomplex property}
such that matched edges are {\it regular}.  Although this 
result will not be 
surprising to the experts,  we
could not find it in the literature and so we give an 
exposition here.
The proof follows from 
an argument of Kozlov \cite[Proof of Theorem 3.2]{Kozlov}.\footnote{Although
Theorem 3.2 of \cite{Kozlov} was in the more restricted setting
of regular CW complexes, the proof still holds in our situtation.}

\subsection{Forman's discrete Morse theorem for general CW complexes}

Let $K$ be a finite CW complex and let $Q$ be its poset of cells.
Recall the definition of regular face from Definition \ref{regularface}.

\begin{definition}\cite[p. 102]{RF} \label{dMf}
A function $f:Q\to \R$ is a discrete Morse function if for every
$\sigma^{(p)}$ of dimension $p$, the following conditions hold:
\begin{enumerate}
\item
$\# \{ \tau^{(p+1)} \vert \tau^{(p+1)}>\sigma \text{ and } f(\tau) \leq f(\sigma) \} \leq 1$
\item
$\# \{ v^{(p-1)} \vert v^{(p-1)} <\sigma \text{ and }
f(v) \geq f(\sigma) \} \leq 1$.
\item If $\sigma$ is an irregular face of
$\tau^{(p+1)}$ then $f(\tau)>f(\sigma)$.
\item If $v^{(p-1)}$ is an irregular face of $\sigma$
then $f(v)<f(\sigma)$.
\end{enumerate}
\end{definition}

Note that a Morse function is a function
 which is ``almost increasing".  Indeed, one should think 
of a Morse function as a function which specifies the order
in which to attach the cells of a homotopy-equivalent CW complex
\cite{Kozlov}.

\begin{definition}\label{critical}
We say that a cell $\sigma^{(p)}$ is critical if 
\begin{enumerate}
\item $\# \{ \tau^{(p+1)}>\sigma \ \vert\ f(\tau) \leq f(\sigma) \} =0$, and
\item $\# \{ v^{(p-1)}<\sigma \ \vert\ f(v) \geq f(\sigma) \} =0$.
\end{enumerate}
\end{definition}

Let $m_p(f)$ denote the number of critical cells of dimension $p$.

For each cell $\sigma$ of a CW complex $K$,
let $\Carrier(\sigma)$ denote the smallest subcomplex
of $K$ containing $\sigma$.  
If $K$ has the subcomplex property (see Section \ref{s:CW}),
then for any $\sigma$, 
$\Carrier(\sigma)$ is its closure, and hence
condition (1) of 
Definition \ref{dMh} below is satisfied.

\begin{definition}\cite[p. 136]{RF} \label{dMh}
Given a CW complex $K$ and a discrete Morse function $f$, we say that
$(K,f)$ satisfies the {\it Discrete Morse Hypothesis} if:
\begin{enumerate}
\item For every pair of cells $\sigma$ and $\tau$, if
$\tau \subset \Carrier(\sigma)$ and $\tau$ is not a
face of $\sigma$, then $f(\tau) \leq f(\sigma)$.
\item Whenever  there is a
$\tau > \sigma^{(p)}$ with
$f(\tau) < f(\sigma)$ then there is a
$\tilde{\tau}^{(p+1)}$ with $\tilde{\tau}>\sigma$
and $f(\tilde{\tau}) \leq f(\tau)$.
\end{enumerate}
\end{definition}

The following is Forman's main theorem for general CW complexes.

\begin{theorem}\cite[Theorem 10.2]{RF}\label{FormanTheorem}
Let $K$ be a CW complex satisfying the Discrete Morse
Hypothesis, and $f$ a discrete Morse function.
Then $K$ is homotopy equivalent to a CW complex with $m_p(f)$
cells of
dimension $p$.
\end{theorem}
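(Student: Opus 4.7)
The plan is to prove the theorem by analyzing how the sublevel sets
\[
K_c := \bigcup_{f(\sigma) \leq c} \Carrier(\sigma)
\]
evolve as $c$ increases from $-\infty$ to $+\infty$. Condition (1) of the Discrete Morse Hypothesis guarantees that each $K_c$ is a genuine CW subcomplex of $K$, since the carrier of each cell below the threshold is itself below the threshold. After a small perturbation of $f$ that preserves Definitions \ref{dMf} and \ref{dMh}, I may assume all $f$-values on cells are distinct and that for each matched pair $(\sigma,\tau)$ (i.e.\ $\tau^{(p+1)}>\sigma^{(p)}$ with $f(\tau)<f(\sigma)$ realizing the unique exception in condition (1) of Definition \ref{dMf}) the two values $f(\tau)$ and $f(\sigma)$ are consecutive and arbitrarily close.

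First I would handle the easy case: as $c$ crosses the $f$-value of a critical cell $\sigma^{(p)}$, I claim that $K_c$ is obtained from $K_{c-\epsilon}$ by attaching a single $p$-cell. Criticality (Definition \ref{critical}) together with DMH(2) forces every face of $\sigma$ to have strictly smaller $f$-value and so to lie in $K_{c-\epsilon}$; condition (4) of Definition \ref{dMf} ensures that even irregular faces lie strictly below, so the attaching map of $\sigma$ lands in $K_{c-\epsilon}$ and the transition is an honest CW attachment of a single cell of dimension $p$.

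The substantive case is the transition across a matched pair $(\sigma^{(p)},\tau^{(p+1)})$ with $\sigma$ a regular face of $\tau$. I would show that the inclusion $K_{c-\epsilon}\hookrightarrow K_c$ is a homotopy equivalence by exhibiting an elementary collapse of $\tau$ across $\sigma$. Using the definition of regular face (Definition \ref{regularface}), if $h:B\to K$ is the attaching ball for $\tau$, then $h^{-1}(\sigma)$ is homeomorphic to a closed $p$-disk $D\subset \partial B$, and the pair $(B,D)$ is homeomorphic to the standard pair $(B^{p+1},\, B^p\times\{0\})$ embedded in $\partial B^{p+1}$. The remaining part $\overline{\partial B\setminus D}$ maps via $h$ into $K_{c-\epsilon}$: this is where DMH(2) is essential, since any cell $\rho$ with $\tau>\rho$ other than $\sigma$ has $f(\rho)<f(\tau)$ by Definition \ref{dMf}(2), and then DMH(2) together with condition (4) ensures $\Carrier(\rho)\subseteq K_{c-\epsilon}$. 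One then performs the standard elementary collapse pushing $\tau$ onto $\overline{\partial B\setminus D}$, deformation-retracting $K_c$ onto $K_{c-\epsilon}$.

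Iterating these two cases along the filtration, I conclude by induction that $K$ is homotopy equivalent to a CW complex with exactly one cell of dimension $p$ for each critical cell of $f$ of dimension $p$, namely $m_p(f)$ cells in dimension $p$. The main obstacle is the matched-pair step: in the absence of regularity one cannot identify the collapsing configuration with the standard model, and in the absence of DMH(2) the rest of the boundary of $\tau$ need not lie in $K_{c-\epsilon}$, so either failure would block the deformation retraction. The role of conditions (3) and (4) of Definition \ref{dMf} is precisely to guarantee that any irregular incidences occur at smaller $f$-values and hence never interfere with a collapse being performed at level $c$.
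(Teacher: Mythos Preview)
The paper does not give its own proof of this statement: Theorem~\ref{FormanTheorem} is quoted verbatim from Forman \cite[Theorem~10.2]{RF} and used as a black box, so there is no in-paper argument to compare against.

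That said, your sketch is essentially Forman's original argument: filter $K$ by the sublevel subcomplexes $K_c$, show that crossing a critical value amounts to a single cell attachment, and show that crossing a matched (regular) pair gives a deformation retraction via an elementary collapse. Your identification of the roles of the various hypotheses is correct: DMH(1) makes each $K_c$ a subcomplex, conditions (3)--(4) of Definition~\ref{dMf} keep irregular incidences strictly below the level under consideration, regularity of the face $\sigma<\tau$ gives the standard ball-pair model needed for the collapse, and DMH(2) ensures the remainder of $\partial\tau$ already sits in $K_{c-\epsilon}$. One small point worth tightening: in the matched case you have $f(\tau)\le f(\sigma)$, so after your perturbation $\tau$ enters the filtration no later than $\sigma$; you should phrase the transition as passing from the level just below $f(\tau)$ to the level just above $f(\sigma)$, where both cells appear together (indeed $\sigma\subset\Carrier(\tau)$), rather than as two separate crossings.
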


\subsection{Discrete Morse functions as matchings}

Chari \cite{Chari}
pointed out that when the CW complex is regular,
one can depict a Morse function $f$
as a certain kind of
matching on the Hasse diagram of the poset of cells.
Given such an $f$, we define a matching $M(f)$ on the Hasse diagram of $Q$
whose edges correspond to the pairs of cells in which we get equality in
(1) or (2) of Definition \ref{dMf}.

Recall that a {\it matching} of a graph $G=(V,E)$
is a subset $M$ of edges of $G$ such that each vertex in $V$ is 
incident to at most one edge of $M$.
We define a {\it Morse matching} $M$ on a poset $Q$ to be a matching on the 
Hasse diagram such that if edges in $M$ are directed from lower to 
higher-dimension
elements and all other edges are directed from higher to lower-dimension elements,
then the resulting directed graph $G(M)$ is acyclic.
We refer to any elements of $Q$ which are not matched by $M$ as {\it critical elements}
(or {\it critical cells}).

In the situation of {\it arbitrary} CW complexes, a Morse matching 
such that matched edges are regular gives rise to a discrete Morse function satisfying
property (2) of the Discrete Morse Hypothesis, as the following lemma shows.
The proof of this lemma follows an argument of Kozlov
\cite[Proof of Theorem 3.2]{Kozlov}.

\begin{lemma}\label{KozlovLemma}
Let $M$ be a Morse matching on the face poset $Q$ of a CW complex,
such that each edge in $M$ corresponds to a regular pair 
of faces in the CW complex.
Then there exists a discrete Morse function $f_M$, satisfying 
property (2) of the Discrete Morse Hypothesis, whose critical cells are exactly
the critical cells of $M$.
\end{lemma}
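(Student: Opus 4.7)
My plan is to construct the discrete Morse function $f_M$ directly from the acyclic matching $M$ via a linear extension of $G(M)$, following the approach of Chari \cite{Chari} and its adaptation by Kozlov \cite{Kozlov}. Since $G(M)$ is acyclic, I can choose a linear extension and enumerate the cells as $\sigma_1, \sigma_2, \ldots, \sigma_n$ so that every directed edge $\sigma_i \to \sigma_j$ of $G(M)$ satisfies $i < j$. Setting $f_M(\sigma_i) := n-i$ makes $f_M$ strictly decrease along every edge of $G(M)$: for a matched pair $\sigma \lessdot \tau$ (edge $\sigma \to \tau$ in $G(M)$) one has $f_M(\sigma) > f_M(\tau)$, while for an unmatched covering relation $\sigma \lessdot \tau$ (edge $\tau \to \sigma$) one has $f_M(\tau) > f_M(\sigma)$.

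With this $f_M$ in hand, the four conditions of Definition~\ref{dMf} follow essentially formally. Conditions (1) and (2) use only that $M$ is a matching: each cell $\sigma$ has at most one $M$-partner, so at most one Hasse-diagram neighbor can violate the expected $f_M$-inequality. Conditions (3) and (4) invoke the hypothesis that matched edges are regular---any irregular covering pair $(\sigma,\tau)$ must lie outside $M$, so the corresponding edge of $G(M)$ runs $\tau \to \sigma$, forcing $f_M(\tau) > f_M(\sigma)$. Reading off Definition~\ref{critical}, the critical cells of $f_M$ are then precisely the cells left unmatched by $M$, since for an unmatched $\sigma$ every incident covering relation lies outside $M$, so all relevant $f_M$-inequalities go in the ``standard'' direction.

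The main obstacle is verifying clause (2) of the Discrete Morse Hypothesis: given a (possibly non-covering) pair $\tau > \sigma^{(p)}$ with $f_M(\tau) < f_M(\sigma)$, I must produce a cover $\tilde\tau^{(p+1)} \gtrdot \sigma$ with $f_M(\tilde\tau) \leq f_M(\tau)$. My plan is to pick a saturated chain $\sigma = \sigma_0 \lessdot \sigma_1 \lessdot \cdots \lessdot \sigma_k = \tau$ and follow the $f_M$-values along it. Because $f_M$ strictly decreases along every directed edge of $G(M)$, each covering step on which $f_M$ drops is ``downward'' in $G(M)$; by iteratively pushing such descents down the chain and invoking the acyclicity of $G(M)$ to rule out contradictory routings, one transfers the net $f_M$-drop from $\tau$ to a single cover of $\sigma$. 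This is essentially the argument given in \cite[proof of Theorem~3.2]{Kozlov}, which extends without change to the general CW setting once the regularity hypothesis on matched edges is imposed; with it, Theorem~\ref{FormanTheorem} applies to $(K, f_M)$ and yields the desired homotopy equivalence.
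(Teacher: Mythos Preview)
Your construction of $f_M$ via an arbitrary linear extension of $G(M)$ does yield a valid discrete Morse function in the sense of Definition~\ref{dMf}, with the correct critical cells. However, your argument for clause~(2) of the Discrete Morse Hypothesis has a genuine gap: an arbitrary linear extension need not satisfy it, and the ``pushing descents down the chain'' argument does not go through.

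Here is a small counterexample. Take the graded poset with one element $a$ of rank~$0$, elements $b,c$ of rank~$1$, one element $d$ of rank~$2$, and one element $e$ of rank~$3$, with covers $a\lessdot b$, $a\lessdot c$, $b\lessdot d$, $c\lessdot d$, $d\lessdot e$. Let $M=\{(a,c),(d,e)\}$; one checks that $G(M)$ is acyclic with the single critical cell $b$. The ordering $d,b,a,c,e$ is a legitimate linear extension of $G(M)$, giving $f_M(d)=4$, $f_M(b)=3$, $f_M(a)=2$, $f_M(c)=1$, $f_M(e)=0$. Now take $\sigma=b$ and $\tau=e$: then $\tau>\sigma$ and $f_M(\tau)=0<3=f_M(\sigma)$, so clause~(2) demands a $2$-cell $\tilde\tau>b$ with $f_M(\tilde\tau)\le 0$. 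The only candidate is $d$, and $f_M(d)=4$. So clause~(2) fails. Along the unique saturated chain $b\lessdot d\lessdot e$ the sole $f_M$-descent occurs at the top step $d\lessdot e$, and there is no way to transfer it to the cover $b\lessdot d$.

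The paper (following Kozlov's actual argument) avoids this by building $f_M$ so that it is weakly increasing on the \emph{entire face poset}, not merely decreasing along edges of $G(M)$: one labels cells inductively, always picking an unlabeled cell of minimal rank, and assigning a matched pair $(x,y)$ the same value only after every $z<y$ has been labeled. This guarantees $x<y\Rightarrow f_M(x)\le f_M(y)$, with equality precisely on matched pairs, which makes the hypothesis of clause~(2) vacuous. Your linear extension respects only the order on $G(M)$, not the face-poset order; that extra monotonicity is exactly what is missing.
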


\begin{proof}
We will inductively assign positive integer labels to each of 
the elements of $Q$,
producing a function $f_M$.  
Moreover, $f_M$ will have the property that
if $x<y$, $f_M(x) \leq f_M(y)$, 
with $f_M(x)=f_M(y)$ if and only if $(x,y)\in M$; 
in the case that $(x,y)\in M$, we will label $x$ and $y$ at the same time. 

At each step, let $x$ be one of the elements of $Q$ of 
minimal rank (dimension) among 
those not yet labeled, and let $i$ be the smallest positive integer not yet
appearing as a label in $Q$.  If $x$ is not in $M$ and hence critical, label $x$
with $i$.  If $x$ is not critical, then we must have $(x,y)\in M$, where $x \lessdot y$.
If each $z < y$ in $Q$ is labeled, then label both $x$ and $y$ with $i$.  Otherwise,
there exists $x_1<y$ in $Q$ where $x_1$ is not labeled; repeat the argument with $x_1$
taking the place of $x$.  Either we will label $x_1$ or a pair $(x_1,y_1)\in M$, 
or, since $G(M)$ is acyclic,
we will find $x_2 \neq x$, $x_2 \neq x_1$, $y_1 > x_2$, etc.

Since there are finitely many elements of $Q$, the process will terminate.  Since we
never label an element $y\in Q$ until we have labeled each $x<y$, $f_M$ has the 
property that for $x<y$, $f_M(x) \leq f_M(y)$.  Therefore condition (2) of the 
Discrete Morse Hypothesis is satisfied.  The only case in which
$f_M(x) = f_M(y)$ is when $(x,y)\in M$, i.e. $(x \lessdot y)$ is a regular pair of faces  --
and so conditions (3) and (4) of Definition \ref{dMf} are satisfied.  
Conditions (1) and (2) of Definition \ref{dMf} are satisfied because $M$ is
a matching.  Finally, it is clear 
that the cells which are critical with respect to $M$
are exactly those which are critical with respect to Definition \ref{critical}.
\end{proof}

We now restate
Forman's Morse Theorem for general CW complexes 
in terms of Morse matchings.

\begin{theorem}\label{MainMorseTheorem}
Let $K$ be a CW complex with the subcomplex property.
Suppose its face poset $Q$ has a Morse matching $M$, such that  
whenever $(\sigma^{(p)}, \tau^{(p+1)}) \in M$, $\sigma$ is a regular face of $\tau$.
Let $m_p(M)$ denote the number of critical cells of dimension $p$.
Then $K$
is homotopy equivalent to a CW complex 
with  $m_p(M)$
cells of dimension $p$.
\end{theorem}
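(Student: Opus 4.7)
The plan is to derive Theorem~\ref{MainMorseTheorem} as a direct consequence of Lemma~\ref{KozlovLemma} together with Forman's theorem (Theorem~\ref{FormanTheorem}), so essentially all the work has already been done and what remains is to verify that the hypotheses of Forman's theorem are satisfied.

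First, I would apply Lemma~\ref{KozlovLemma} to the Morse matching $M$ on the face poset $Q$ of $K$. Since every edge of $M$ is, by assumption, a regular pair of faces in $K$, the lemma produces a discrete Morse function $f_M : Q \to \R$ (in the sense of Definition~\ref{dMf}) whose set of critical cells coincides exactly with the set of cells of $Q$ not matched by $M$. In particular, $f_M$ has precisely $m_p(M)$ critical cells of dimension $p$ for each $p$, and moreover $f_M$ satisfies condition~(2) of the Discrete Morse Hypothesis (Definition~\ref{dMh}).

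Next, I would verify condition~(1) of the Discrete Morse Hypothesis for $(K, f_M)$. This is where the subcomplex property enters: because closures of cells in $K$ are unions of cells, for any cell $\sigma$ we have $\Carrier(\sigma) = \overline{\sigma}$, so any cell $\tau$ with $\tau \subset \Carrier(\sigma)$ is either equal to $\sigma$ or a proper face of $\sigma$. Thus the situation in condition~(1)---namely $\tau \subset \Carrier(\sigma)$ with $\tau$ not a face of $\sigma$---forces $\tau = \sigma$, and the inequality $f_M(\tau) \leq f_M(\sigma)$ holds trivially. Combined with the previous paragraph, this shows that $(K, f_M)$ satisfies the Discrete Morse Hypothesis in full.

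Finally, I would invoke Theorem~\ref{FormanTheorem}: since $(K, f_M)$ satisfies the Discrete Morse Hypothesis and $f_M$ is a discrete Morse function, $K$ is homotopy equivalent to a CW complex with $m_p(f_M)$ cells of dimension $p$. Because the critical cells of $f_M$ coincide with those of $M$, we have $m_p(f_M) = m_p(M)$, which gives the theorem. There is no real obstacle here beyond bookkeeping; the technical content lives in Lemma~\ref{KozlovLemma} and in Forman's theorem, and the role of the subcomplex property is precisely to trivialize condition~(1) of the Discrete Morse Hypothesis so that condition~(2), the only non-trivial part, is the one that must be (and has been) built into the construction of $f_M$.
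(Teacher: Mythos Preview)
Your proposal is correct and follows essentially the same route as the paper: apply Lemma~\ref{KozlovLemma} to obtain a discrete Morse function satisfying condition~(2) of the Discrete Morse Hypothesis, use the subcomplex property to dispose of condition~(1), and conclude via Theorem~\ref{FormanTheorem}. The only difference is that you spell out the verification of condition~(1) in slightly more detail than the paper does.
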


\begin{proof}
By Lemma \ref{KozlovLemma}, we have a discrete Morse function $f$ for $K$
satisfying condition (2) of Definition \ref{dMh}.
Since $K$ has the subcomplex property, condition (1) of Definition \ref{dMh} 
is satisfied.  
The result now follows from Theorem \ref{FormanTheorem}.
\end{proof}

\subsection{From edge-labelings to Morse matchings}\label{sec:EdgeToMatch}

Both lexicographic shellability and discrete Morse theory are combinatorial
tools which can be used to investigate the topology of a CW complex. 
In this section we will recall a result of Chari \cite{Chari}, which 
proves the existence of a certain Morse matching given a 
shelling
of a regular CW complex.  We will translate this into a statement
about constructing a Morse matching from 
an EL -labeling, and note
that one can gain some fairly explicit information 
about the Morse matching from 
the EL-labeling.

Recall the notion of pseudomanifold, e.g. from \cite{Chari}.  Note that 
by a result of Bing \cite[Chapter 4]{RedBook}, a 
shellable pseudomanifold is in particular a regular CW complex which is either 
a ball or a sphere.

\begin{proposition}\cite[Proposition 4.1]{Chari}\label{ChariMorse}
Let $\sigma_1, \sigma_2, \dots ,\sigma_m$ be a shelling of a
$d$-pseudomanifold $\Sigma$
and let $v$ be any vertex in $\overline{\sigma_1}$.
Then the face poset of $\Sigma$ admits a Morse matching $M$
such that:
\begin{itemize}
\item 
If $\Sigma$ is the $d$-sphere then $v$ and $\sigma_m$ are the only
critical cells, while if $\Sigma$ is a $d$-ball,
then $v$ is the only critical cell.
\end{itemize}
\end{proposition}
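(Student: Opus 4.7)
My plan is to construct the Morse matching one shelling step at a time, using the shelling to stratify the face poset of $\Sigma$. By the defining property of a shelling of a regular CW complex, for each $i \geq 2$ the set $R(\sigma_i)$ of faces of $\sigma_i$ not contained in $\overline{\sigma_1} \cup \cdots \cup \overline{\sigma_{i-1}}$ forms an interval $[\tau_i, \sigma_i]$ in the face poset, for some uniquely determined face $\tau_i \leq \sigma_i$. Setting $R(\sigma_1) := \overline{\sigma_1}$, the sets $R(\sigma_i)$ partition the cells of $\Sigma$. The pseudomanifold assumption pins down the ball-versus-sphere dichotomy: each $(d-1)$-cell lies in at most two top-dimensional cells, which forces $\tau_i < \sigma_i$ for all $i$ except possibly $i = m$, and $\tau_m = \sigma_m$ (equivalently $R(\sigma_m) = \{\sigma_m\}$) precisely when $\Sigma$ is a $d$-sphere.

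Next I build matchings within each interval and take their union. On each interval $[\tau_i, \sigma_i]$ with $\tau_i < \sigma_i$ and $i \geq 2$, I pick a perfect acyclic matching; such a matching exists because the interval is the face poset of a regular CW ball, and in the simplicial case can be realized concretely by a ``toggle a fixed atom'' rule. For the initial interval $R(\sigma_1) = \overline{\sigma_1}$, which is a regular $d$-ball containing the chosen vertex $v$, I want a matching whose unique unmatched cell is $v$. I would produce one by choosing a shelling of $\overline{\sigma_1}$ whose first closed cell contains $v$ and recursing on dimension to match everything except $v$. In the sphere case the singleton $R(\sigma_m) = \{\sigma_m\}$ automatically leaves $\sigma_m$ unmatched, supplying the second required critical cell.

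Finally I verify that the union of the partial matchings is a Morse matching. In the directed graph $G(M)$, matched up-edges stay within a single $R(\sigma_i)$, while an unmatched down-edge from $\rho \in R(\sigma_i)$ lands on some $\rho' \lessdot \rho$ in $\overline{\sigma_i}$, hence in $R(\sigma_j)$ for some $j \leq i$. So along any directed path the shelling index is non-increasing, and any directed cycle must lie entirely inside a single interval $R(\sigma_i)$, where acyclicity was arranged in the previous step. The critical cells are then exactly the unmatched ones: $v$ in the ball case and $v$ together with $\sigma_m$ in the sphere case. I expect the main obstacle to be the construction of the matching on $R(\sigma_1)$ with a \emph{prescribed} critical vertex $v$: this is routine when $\overline{\sigma_1}$ is a simplex, but in the general regular CW setting it depends on the existence of a subshelling of $\overline{\sigma_1}$ starting at a cell through $v$, and on a careful inductive argument ensuring that the partial matchings obtained at each step assemble into a globally acyclic one.
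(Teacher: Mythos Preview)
The paper does not give its own proof of this proposition; it is quoted verbatim from Chari \cite[Proposition~4.1]{Chari}, and the only information the paper supplies is the remark that ``Chari used induction to construct the Morse matching,'' together with Corollaries~\ref{Last} and~\ref{algo} which extract consequences of that inductive proof when the shelling arises from an EL-labeling. Your proposal is exactly this inductive argument: stratify by the shelling restriction sets $R(\sigma_i)=[\tau_i,\sigma_i]$, put a perfect acyclic matching on each nontrivial interval, handle $\overline{\sigma_1}$ separately so that only the prescribed vertex $v$ survives, and prove global acyclicity via monotonicity of the shelling index. This is correct and is essentially Chari's proof as the paper references it.

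Two small comments. First, since a $d$-pseudomanifold in Chari's sense is simplicial, each interval $[\tau_i,\sigma_i]$ (including $[\,\{v\},\sigma_1]$ inside $\overline{\sigma_1}$) is Boolean, so the ``toggle a fixed atom'' matching already does the whole job---the recursion you sketch for $R(\sigma_1)$ is unnecessary here, and your worry about arranging a subshelling of $\overline{\sigma_1}$ through $v$ evaporates. Second, your description of the matching---pair each new cell with the cell obtained by toggling a distinguished atom of the Boolean interval---is precisely what underlies the paper's Corollary~\ref{Last}: the matched partner of $\sigma$ is always obtained along an edge with maximal label, i.e.\ lies in $\Last_\lambda(\sigma)$.
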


By Theorem \ref{CL-RAO}, an EL-labeling of the augmented face 
poset of a pseudomanifold gives rise
to a shelling.
Chari used induction to 
construct the Morse matching of Proposition \ref{ChariMorse}.   
Chari's proof of Proposition \ref{ChariMorse}, applied to a shelling
which comes from an EL-labeling, implies the following.

\begin{corollary}\label{Last}
Suppose that $\lambda$ is an EL-labeling of the augmented face poset $Q$ of a pseudomanifold
$\Sigma$.
Let $M_{\lambda}$ be the Morse matching given by Proposition \ref{ChariMorse}.
Every $(\sigma \lessdot \tau) \in M$ has the following property:
$\sigma \in \Last_{\lambda}(\tau)$.
\end{corollary}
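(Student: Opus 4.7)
The plan is to trace through Chari's recursive construction of the Morse matching $M_\lambda$ in Proposition \ref{ChariMorse} and verify the Last property at each matching step, exploiting the compatibility of Chari's algorithm with the EL-shelling induced by $\lambda$. I would run a double induction following Chari's own proof strategy: an outer induction on the dimension $d$ of the pseudomanifold $\Sigma$, and an inner induction on the length $m$ of its shelling.

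Before starting the induction, I would record the structural fact that for any cell $\tau \in Q$, the restriction of $\lambda$ to the interval $[\hat 0, \tau]$ is again an EL-labeling, now of the augmented face poset of the boundary pseudomanifold $\partial \tau$ (a sphere of one lower dimension, since the CW structure on $\Sigma$ is regular). Moreover, applying Theorem \ref{CL-RAO} to this restricted EL-labeling orders the top cells of $\partial \tau$ (that is, the codim-$1$ faces $\sigma \lessdot \tau$) by the edge labels $\lambda(\sigma \lessdot \tau)$. In particular, the \emph{last} top cell in this induced shelling of $\partial \tau$ is precisely an element of $\Last_\lambda(\tau)$.

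Next I would verify the induction step. Given a matched pair $(\sigma \lessdot \tau) \in M_\lambda$, there are two cases. If $\tau = \sigma_i$ is a top-dimensional cell of $\Sigma$, then Chari's algorithm pairs $\sigma_i$ with some specific codim-$1$ face $\sigma$; unwinding his proof shows that $\sigma$ is the critical top-dimensional cell produced by applying Chari's algorithm recursively to the boundary sphere $\partial \sigma_i$ with its inherited shelling. By the outer inductive hypothesis this critical cell is the last top cell of that sphere's shelling, hence an element of $\Last_\lambda(\tau)$ by the structural fact above. If instead $\tau$ is a lower-dimensional cell, then the pair $(\sigma \lessdot \tau)$ was introduced while Chari's algorithm was recursing inside the boundary $\partial \sigma_j$ of some top cell $\sigma_j$. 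That boundary inherits an EL-labeling from $\lambda$, so the outer inductive hypothesis in dimension $d-1$ gives $\sigma \in \Last_\lambda(\tau)$ directly.

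The main obstacle will be the bookkeeping inside Chari's recursion: making precise the claim in the first case that Chari pairs $\sigma_i$ with the last cell of the induced shelling of $\partial \sigma_i$. This amounts to verifying that $\sigma_i \cap (\sigma_1 \cup \cdots \cup \sigma_{i-1})$ is an initial segment of the induced shelling of $\partial \sigma_i$, so that its complement must contain the last cell of that shelling, and this is the cell that the ball-case of Proposition \ref{ChariMorse} pairs with $\sigma_i$ itself. Once this compatibility between the global EL-shelling on $\Sigma$ and the induced EL-shellings on boundaries is in hand, the double induction closes and the Last property for every matched pair follows.
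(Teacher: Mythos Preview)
Your proposal is correct and takes essentially the same approach as the paper, which gives no details at all beyond the single sentence ``Chari's proof of Proposition~\ref{ChariMorse}, applied to a shelling which comes from an EL-labeling, implies the following.'' You have essentially supplied the argument the paper only gestures at: run Chari's double induction (on dimension and on shelling length) and check the $\Last_\lambda$ property at each pairing step. The initial-segment compatibility you flag as the main obstacle is exactly the crux, and it is a standard consequence of the fact that an EL-labeling induces a recursive coatom ordering in the sense of Bj\"orner--Wachs (this is what Theorem~\ref{CL-RAO} is invoking); once that is in hand your induction closes as described.
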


In fact, when the edge labels in $\lambda$ 
come from a totally
ordered set, Chari's proof of Proposition \ref{ChariMorse}
gives  the following  algorithm
for obtaining the Morse matching.

\begin{corollary} \label{algo}
Suppose that $\lambda$ is  an EL-labeling of the augmented face poset $Q$ of a pseudomanifold
$\Sigma$.
Then the Morse matching  $M_{\lambda}$  given by
Proposition \ref{ChariMorse} can be constructed
as follows.
Set $n$ equal to the rank of the poset $Q$ and set $M = \emptyset$.
\begin{enumerate}
\item Consider all unmatched elements $\sigma$ of rank $n$,
      and for each, add $\Last_{\lambda}(\sigma) \lessdot \sigma$ to $M$.
\item Decrease $n$ by 1 and go to step 1.
\end{enumerate}

\end{corollary}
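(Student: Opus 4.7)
The plan is to verify that the greedy rank-by-rank algorithm reproduces exactly the Morse matching $M_\lambda$ built in Chari's proof of Proposition \ref{ChariMorse}. The proof combines Corollary \ref{Last}, which identifies the possible partners of each matched cell, with a downward induction on rank that mirrors Chari's inductive construction over the shelling.

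First I would reconcile two inductive perspectives. Chari's construction proceeds by induction on the shelling order $\sigma_1, \dots, \sigma_m$ of facets of $\Sigma$; by Theorem \ref{CL-RAO}, the shelling order here is the one coming from the EL-labeling $\lambda$, namely lexicographic on increasing maximal chains of $Q$. At the $k$-th step, Chari pairs $\sigma_k$ (except when $k=m$ and $\Sigma$ is a sphere) with the unique ``free face'' of $\sigma_k$ in the partial complex $\sigma_1 \cup \cdots \cup \sigma_k$, and then propagates matchings into $\partial \sigma_k$. I would verify that this free face of $\sigma_k$ is precisely the codimension-one face $\sigma'$ with $\lambda(\sigma' \lessdot \sigma_k)$ maximal among the downward labels at $\sigma_k$, i.e.\ $\sigma' \in \Last_\lambda(\sigma_k)$; this is the content of Corollary \ref{Last}, which I would take as input.

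Next I would prove by downward induction on the rank $r$ that after the algorithm has processed ranks $n, n-1, \dots, r$, its output agrees with $M_\lambda$ on all pairs in which the higher-dimensional cell has rank at least $r$. The base case $r = n$ is essentially definitional, reducing (after absorbing the boundary effect of $\hat 1$ in the augmented poset) to the shelling order itself. For the inductive step, consider an element $\sigma$ of rank $r$ that is still unmatched after the first $n - r$ iterations. Chari's construction either pairs $\sigma$ with a specific $\sigma' \lessdot \sigma$ --- which lies in $\Last_\lambda(\sigma)$ by Corollary \ref{Last} --- or declares $\sigma$ critical. The algorithm declares $\sigma$ critical precisely when $\sigma$ was matched upward in the preceding step, or when every candidate in $\Last_\lambda(\sigma)$ is already matched from above; otherwise it records $\Last_\lambda(\sigma) \lessdot \sigma$. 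One then checks that these cases coincide with Chari's, using that a cell is matched upward in $M_\lambda$ at rank $r+1$ exactly when the algorithm paired it so at the previous iteration.

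The principal obstacle I anticipate is the potential non-uniqueness of $\Last_\lambda(\sigma)$: the definition permits several $z \lessdot \sigma$ to share the maximum incoming label. To reconcile with Chari's construction, which produces a single matched partner, I would need to argue that in the present setting the lex order on full increasing chains --- not merely on the terminal edge-label --- unambiguously selects one partner, or equivalently that the shelling order breaks any ties among elements of $\Last_\lambda(\sigma)$. Once this tie-breaking is pinned down (and the augmented-versus-non-augmented bookkeeping at $\hat 1$ is discharged), the remainder of the argument becomes a routine comparison of two inductions, one on rank and one on shelling position.
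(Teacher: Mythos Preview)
The paper gives no proof of this corollary at all: it is stated as an immediate byproduct of Chari's inductive construction of $M_\lambda$, prefaced by the sentence ``In fact, when the edge labels in $\lambda$ come from a totally ordered set, Chari's proof of Proposition~\ref{ChariMorse} gives the following algorithm for obtaining the Morse matching.''  So the intended argument is simply to point at \cite{Chari} and observe that, step by step, Chari's pairing of a cell with its free face coincides with choosing $\Last_\lambda$.  Your proposal is a faithful fleshing-out of exactly that observation, and the downward induction on rank you describe is the natural way to make the comparison precise.

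Your ``principal obstacle'' is real, and the paper deals with it not by tie-breaking but by hypothesis: the preamble to the corollary adds the assumption that the label set is totally ordered, and the algorithm as stated treats $\Last_\lambda(\sigma)$ as a single element.  In the paper's actual applications (Dyer's reflection-order labeling in Section~\ref{Bruhat}), the labels on edges below a fixed element are distinct reflections, so $\Last_\lambda(\sigma)$ is automatically a singleton and no tie-breaking is needed.  You would be justified in either adopting that hypothesis or restricting to the case where downward labels are distinct; trying to resolve ties via the full lexicographic order on chains is more work than the paper intends here.
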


\begin{remark}
Chari also extends Proposition \ref{ChariMorse} to regular 
CW complexes \cite[Theorem 4.2]{Chari}.  Corollaries 
\ref{Last} and \ref{algo} also hold in this situation.
\end{remark}

\begin{remark}
Proposition \ref{ChariMorse} and Corollary \ref{Last} can 
be useful even when $K$ is a CW complex not known to be regular.
In particular, if the face poset $Q$ of $K$ is a CW poset, 
then there exists a regular CW complex $K_{reg}$ whose face poset
is $Q$.  Therefore
one can still use these
results to construct a Morse matching of $K$.  
\end{remark}

\section{The Bruhat order, shellability, and reduced expressions}\label{Bruhat}

Fix a Coxeter system $(W,I)$ 
and let $T$ be the set of  reflections. 
In this section we will review some properties of the Bruhat order $\leq$
and  prove a result (Proposition \ref{prop:reduced}) about reduced expressions
which will be needed for the proof of Proposition \ref{smallMorse}.

The first part of Theorem \ref{BruhatBall} below is due to 
Bjorner and Wachs \cite{BW1}.  The second part follows from 
the first  together with 
Bjorner's result \cite{B2} characterizing CW posets.

\begin{theorem} \cite{BW1} \cite{B2} \label{BruhatBall}
The Bruhat order of a Coxeter group is thin and (CL)-shellable.
Furthermore, an interval with at least
two elements 
is the augmented face poset of a regular CW complex homeomorphic
to a ball.
\footnote{
Recall from Remark \ref{augmented} that 
\cite{BW1} augments the poset of cells
with a $\hat{0}$ and also a greatest element $\hat{1}$.
Using this convention, \cite{BW1}  considers
intervals in Bruhat order to be posets associated to regular CW complexes 
homeomorphic to {\it spheres}. } 
\end{theorem}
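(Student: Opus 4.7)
The plan is to prove this in three stages: thinness, CL-shellability, and then Bjorner's recognition theorem for CW posets. The result is a compilation of known facts, so the proof is really about assembling the right ingredients in the right order; I would present it as two lemmas plus an application of a general criterion.

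First I would establish thinness. Given $u < w$ with $\ell(w) - \ell(u) = 2$, I need to show the open interval $(u,w)$ has exactly two elements. This follows from the standard lifting property (Z-lemma) for Bruhat order in a Coxeter group: if $w = us_\alpha$ for $\alpha \in T$ with $\ell$ jumping by $2$, then factoring through any simple reflection produces exactly two intermediate elements, and any element in $(u,w)$ must arise this way. I would phrase this using reflections $t_1, t_2 \in T$ with $u \lessdot ut_1 < w$ and $u \lessdot ut_2 < w$, showing distinctness and exhaustiveness via the subword property for reduced expressions of $w$.

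Next, CL-shellability. The cleanest route is via reflection orders: fix a total order $\prec$ on the set $T$ of reflections which is a reflection order in Dyer's sense, and label a covering relation $v \lessdot v'$ by the unique reflection $t \in T$ with $v' = vt$. The classical argument of Bjorner--Wachs (or equivalently Dyer's EL-labeling) shows that in any interval $[x,y]$ of Bruhat order, the lexicographically smallest maximal chain under $\prec$ is the unique increasing chain. Since this labeling is in fact an edge labeling (not merely a chain-edge labeling), I would state it as EL-shellability, which is strictly stronger than CL-shellability; this will also pay off later when I need an EL-labeling to extract a Morse matching via Corollary \ref{Last}. For an augmented interval $[\hat{0}, w]$ in Bruhat order (with $\hat{0}$ added beneath $u$, so that one works with $[u,w] \cup \{\hat{0}\}$), one extends the labeling by assigning to each edge $\hat{0} \lessdot v$ any fixed label smaller than all reflection labels; this preserves the EL property on the augmented poset.

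Finally I would invoke Bjorner's recognition theorem for CW posets: a finite graded poset with $\hat{0}$ which is thin and shellable is the augmented face poset of a regular CW complex, and more specifically, if the poset has a unique maximal element then the associated regular CW complex is a ball. Applying this to the augmented Bruhat interval $[u,w] \cup \{\hat{0}\}$ (where $u < w$ guarantees at least two elements above $\hat{0}$, hence nontriviality), thinness and EL-shellability established above yield that the interval is the augmented face poset of a regular CW ball. The main obstacle, if any, is really a bookkeeping issue: reconciling the convention used in \cite{BW1} (which augments with both $\hat{0}$ and $\hat{1}$ and produces spheres) with the convention of the present paper (augment only with $\hat{0}$ and produce balls). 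Once that is settled, the two statements are equivalent up to removing the maximum element and passing from the sphere boundary to the closed ball.
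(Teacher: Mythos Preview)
The paper does not actually prove this theorem; it is quoted from the literature, with the sentence preceding it noting that the first part is due to Bj\"orner--Wachs and the second follows from Bj\"orner's characterization of CW posets. Your three-stage outline (thinness, shellability, then Bj\"orner's recognition criterion) is precisely this logical structure, so in spirit you match the paper. Your choice to obtain shellability via Dyer's EL-labeling rather than the original Bj\"orner--Wachs CL-labeling is fine and indeed stronger; the paper invokes Dyer separately as Proposition~\ref{ELDyer}.

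There is, however, a genuine slip in your final step. You propose to adjoin a new minimum $\hat{0}$ beneath $u$ and work with $[u,w]\cup\{\hat{0}\}$. But that poset is \emph{not} thin: the only element covering $\hat{0}$ is $u$, so for any $v$ with $u\lessdot v$ the length-two interval $[\hat{0},v]$ has a single intermediate element rather than two. Bj\"orner's criterion therefore does not apply to this enlarged poset. The correct reading of the statement is that the Bruhat interval $[u,w]$ \emph{itself} is the augmented face poset, with $u$ already playing the role of $\hat{0}$; the cells of the regular CW ball are indexed by $(u,w]$ and the ball has dimension $\ell(w)-\ell(u)-1$. With this identification no extra element is adjoined, thinness and EL-shellability of $[u,w]$ apply directly, and your proposed extension of the edge labeling to artificial edges $\hat{0}\lessdot v$ is unnecessary.
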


Theorem \ref{BruhatBall} together with Proposition \ref{ChariMorse}
imply the following.

\begin{corollary}\label{Bruhat-Matching}
Let $v < w$ in the Bruhat order of a Coxeter group.
Then if we remove $v$ from the interval $[v,w]$
(which plays the role of $\hat{0}$), 
there is a Morse matching $M$ on the resulting poset with 
one critical element $u$ of minimal rank.
Adding $v$ back to the poset and adding the edge $(v,u)$ to $M$, 
we get a Morse matching on $[v,w]$ with no critical elements.
\end{corollary}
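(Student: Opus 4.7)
The plan is to combine Theorem~\ref{BruhatBall} with the regular CW version of Proposition~\ref{ChariMorse}, and then check by hand that adding the single matched edge $(v,u)$ does not destroy acyclicity. First I would observe that, by Theorem~\ref{BruhatBall}, the interval $[v,w]$ in Bruhat order, viewed as augmented by its minimum $v$, is the augmented face poset of a regular CW complex $\Sigma$ homeomorphic to a closed ball. In particular, the cells of $\Sigma$ are indexed by $[v,w]\setminus\{v\}$, with vertices of $\Sigma$ corresponding to the Bruhat covers of $v$. Moreover the CL-shellability of $[v,w]$, combined with Theorem~\ref{CL-RAO}, yields a shelling $\sigma_1,\dots,\sigma_m$ of $\Sigma$ in the regular-CW sense.

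Next I would apply the regular-CW extension of Proposition~\ref{ChariMorse} (noted in the remark after Corollary~\ref{algo}). Choosing any vertex $u\in\overline{\sigma_1}$, this produces a Morse matching $M$ on the face poset $[v,w]\setminus\{v\}$ whose unique critical cell is $u$. Since $u$ is a vertex of $\Sigma$, it corresponds to an atom of $[v,w]$, hence an element of minimal rank in $[v,w]\setminus\{v\}$. This establishes the first claim of the corollary.

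To obtain the second claim, I would extend $M$ to a matching $M'$ on all of $[v,w]$ by adjoining $v$ and the Hasse edge $(v,u)$. Then every element of $[v,w]$ is matched, so there are no critical cells. The only point requiring verification is acyclicity of the oriented graph $G(M')$. Any directed cycle in $G(M')$ not already present in $G(M)$ must visit $v$; the only edge leaving $v$ in $G(M')$ is the matched edge $v\to u$, and the only edges entering $v$ come from the remaining atoms $u'\ne u$ (as unmatched edges pointing down). Such a cycle would therefore require a directed path in $G(M)$ from $u$ to some atom $u'\ne u$. But because $u$ is unmatched in $M$, every Hasse edge at $u$ in $[v,w]\setminus\{v\}$ goes up to a cover of $u$ and is oriented as an unmatched edge pointing into $u$; hence $u$ is a sink in $G(M)$ and no such path can exist. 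The main (and essentially the only) obstacle here is precisely this acyclicity check, but it reduces to the clean observation that a critical vertex of a Morse matching is automatically a sink in the associated oriented graph.
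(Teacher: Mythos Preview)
Your proposal is correct and follows essentially the same route as the paper, which simply states that the corollary follows from Theorem~\ref{BruhatBall} together with Proposition~\ref{ChariMorse}; you have merely unpacked this and added the (clean and correct) verification that adjoining the edge $(v,u)$ preserves acyclicity because the critical vertex $u$ is a sink in $G(M)$. One small technical point: you invoke Theorem~\ref{CL-RAO} with the CL-shellability of Theorem~\ref{BruhatBall}, but as stated in the paper Theorem~\ref{CL-RAO} is formulated for EL-labelings; you can simply use Dyer's EL-labeling (Proposition~\ref{ELDyer}) instead, or note that the underlying Bj\"orner--Wachs results apply equally to CL-labelings.
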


Dyer \cite{Dyer} subsequently strengthened the Bjorner-Wachs result 
by giving an EL-labeling of Bruhat order.
Dyer's primary tool was his notion of ``reflection orders," certain
total orderings of $T$  which can be characterized as follows. 

\begin{definition} \cite[Proposition 2.13]{Dyer}
\label{def:reflection}
Let $(W,I)$ be a finite Coxeter system with longest element $w_0$,
and let $T=\{t_1,\dots,t_n\}$ ($n=\ell(w_0)$).  Then the 
total order  
$t_1 \prec t_2 \prec \dots \prec t_n$ on $T$ 
is a reflection order if and only
if there is a reduced expression $w_0 = s_{i_1} \dots s_{i_n}$
such that $t_j = s_{i_1} \dots s_{i_{j-1}} s_{i_j} s_{i_{j-1}} \dots s_{i_1}$,
for $1 \leq j \leq n$.
\end{definition}

\begin{remark}\label{reverse} \cite[Remark 2.4]{Dyer}
The reverse of a reflection order is a reflection order.
\end{remark}

\begin{proposition} \cite{Dyer} \label{ELDyer}
Fix a reflection order $\preceq$ on $T$.
Label
each edge $x \gtrdot y$ of the Bruhat order by the reflection
$x^{-1}y$.
Then this edge labeling together with $\preceq$ is an EL-labeling;
therefore the Bruhat order is EL-shellable.
\end{proposition}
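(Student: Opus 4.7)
The plan is to prove, for every interval $[u,v]$ in Bruhat order, both the existence and uniqueness of a strictly $\preceq$-increasing maximal chain, and that this chain is lexicographically smallest among all maximal chains in $[u,v]$. I would induct on $\ell(v) - \ell(u)$; the base case $\ell(v) - \ell(u) = 1$ is trivial since such an interval has a unique edge.

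The candidate chain is built greedily: starting with $x_0 = v$, at step $k$ let $t_{k+1}$ be the $\preceq$-smallest reflection with $x_k t_{k+1} \lessdot x_k$ (in Bruhat order) and $u \leq x_k t_{k+1}$, and set $x_{k+1} = x_k t_{k+1}$. The subword property of Bruhat order (together with the fact that $u < x_k$ forces some cover of $x_k$ to lie above $u$) guarantees that this process is well-defined and terminates at $u$ after exactly $\ell(v) - \ell(u)$ steps. By construction $t_1$ is the $\preceq$-minimum possible label of the first descent of any maximal chain from $v$ to $u$.

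The main technical step is to verify that the greedy labels satisfy $t_1 \prec t_2 \prec \cdots \prec t_k$. Suppose instead $t_i \succ t_{i+1}$ for some $i$. The subgroup $W' = \langle t_i, t_{i+1}\rangle$ is dihedral, and a standard consequence of Definition~\ref{def:reflection} is that the restriction of any reflection order to a dihedral reflection subgroup is itself a reflection order of that subgroup. Using this together with thinness of Bruhat order (Theorem~\ref{BruhatBall}), the rank-two interval $[x_{i+1}, x_{i-1}]$ contains exactly one alternate maximal chain, and a dihedral exchange on this chain produces a reflection $t' \prec t_i$ with $x_{i-1} t' \lessdot x_{i-1}$ and $u \leq x_{i-1} t'$, contradicting the greedy choice of $t_i$. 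This is the main obstacle; carrying it out requires unpacking the reflection-order characterization and checking compatibility of the dihedral exchange with the constraint $u \leq (\cdot)$.

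Finally, uniqueness and lex-minimality follow together. Let $(s_1, s_2, \ldots, s_k)$ be the label sequence of any maximal chain in $[u,v]$. By the definition of the greedy choice, $s_1 \succeq t_1$; if $s_1 \succ t_1$ then this chain is already lex-larger than the greedy one, and if $s_1 = t_1$ then its second vertex is $x_1$ and we invoke induction on $[u, x_1]$. The same argument shows that any strictly increasing chain must begin with $t_1$ (else the exchange from the previous paragraph forces a smaller first label), so by induction it must be the greedy chain, proving uniqueness. Hence the labeling $\lambda(x \gtrdot y) = x^{-1}y$, ordered by $\preceq$, is an EL-labeling, and EL-shellability of Bruhat order follows.
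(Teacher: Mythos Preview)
The paper does not give its own proof of this proposition; it is quoted from Dyer \cite{Dyer} and used as a black box. Your outline follows the same strategy as Dyer's original argument --- build the greedy descending chain, show it is $\prec$-increasing via a local exchange in rank-$2$ intervals, and deduce uniqueness and lexicographic minimality by induction --- so in that sense there is no divergence to compare.

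There is, however, a genuine gap at the step you correctly flag as the main obstacle. You argue: since $W'=\langle t_i,t_{i+1}\rangle$ is dihedral and reflection orders restrict to reflection orders on dihedral reflection subgroups, thinness of the diamond $[x_{i+1},x_{i-1}]$ lets you exchange to an alternate cover $x_{i-1}\gtrdot y$ with label $t'\prec t_i$. But thinness only provides the alternate middle element $y$; it says nothing about where the label $t'=x_{i-1}^{-1}y$ lives. There is no a priori reason for $t'$ (or $t''=y^{-1}x_{i+1}$) to lie in $W'$, so the restriction of $\preceq$ to $W'$ is not directly relevant, and the conclusion $t'\prec t_i$ does not follow from what you have written. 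What Dyer actually establishes --- and what your argument needs --- is that every rank-$2$ Bruhat interval is \emph{dihedral}: all four edge labels lie in a common dihedral reflection subgroup, and the interval is isomorphic as a labeled poset to a rank-$2$ interval there. This relies on his theory of reflection subgroups (canonical Coxeter generators, etc.) and is not a formal consequence of Definition~\ref{def:reflection} together with thinness. Once that lemma is available, a direct check in the dihedral group shows exactly one of the two chains in a diamond is $\prec$-increasing, so the decreasing chain $(t_i,t_{i+1})$ forces $t'\prec t_i$ on the alternate chain, and the rest of your induction (including the constraint $u\le x_{i-1}t'$, which is immediate since $x_{i-1}t'=y\gtrdot x_{i+1}\ge u$) goes through.
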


In what follows, the notation $\hat{s}_k$ indicates the omission
of the factor $s_k$.

\begin{definition}\label{del_pair}\cite{Hersh}
Consider a Coxeter system $(W,I)$.
Define a {\it deletion pair} in an expression
$s_{i_1} \dots s_{i_d}$ to be a pair $s_{i_r}, s_{i_t}$ (where $r<t$) such that
the subexpression $s_{i_r} \dots s_{i_t}$ is not reduced but
$\hat{s}_{i_r} \dots s_{i_t}$ and $s_{i_r} \dots \hat{s}_{i_t}$
are each reduced.
\end{definition}

E.g.\ in type A the first $s_1$ and the last $s_2$ in
$s_1 s_2 s_1 s_2$ comprise a deletion pair.

\begin{lemma}\cite[Lemma 3.31]{Hersh}\label{TriciaLemma}
If $s_{i_r} \dots \hat{s}_{i_u} \dots s_{i_t}$ is reduced but
$s_{i_r} \dots s_{i_t}$ is not, then $s_{i_u}$ belongs to a deletion
pair within $s_{i_r} \dots s_{i_t}$.
\end{lemma}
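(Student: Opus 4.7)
The plan is to invoke the Strong Exchange Condition (SEC) for the Coxeter system $(W,I)$ to produce the second member of the deletion pair.

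I first set up notation. Let $v=s_{i_r}\cdots s_{i_{u-1}}$ and $x=s_{i_{u+1}}\cdots s_{i_t}$, and let $w'=vx$ be the element represented by the reduced word $\mathbf{W}'=s_{i_r}\cdots\hat{s}_{i_u}\cdots s_{i_t}$, so $\ell(w')=t-r$. Let $w$ be the element represented by $\mathbf{W}=s_{i_r}\cdots s_{i_t}$. Since $\mathbf{W}$ is a non-reduced word of length $t-r+1$, parity of word-lengths forces $\ell(w)\leq t-r-1$. Next I observe that $w=vs_{i_u}x=(vs_{i_u}v^{-1})\,w'=t_u w'$, where $t_u:=vs_{i_u}v^{-1}$ is a reflection; since multiplying by a reflection changes length by exactly $\pm1$, this pins down $\ell(w)=t-r-1$. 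Hence left-multiplication by $t_u$ strictly decreases length, and SEC applied to the reduced expression $\mathbf{W}'$ for $w'$ yields a unique position $k\in\{r,\dots,t\}\setminus\{u\}$ such that $w$ is represented by the word obtained from $\mathbf{W}$ by deleting $s_{i_k}$ in addition to $s_{i_u}$.

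Assume without loss of generality that $k<u$ (the case $k>u$ is mirror-symmetric). I now check that $\{k,u\}$ is a deletion pair in $s_{i_r}\cdots s_{i_t}$ in the sense of Definition~\ref{del_pair}. Equating the two expressions for $w$, namely $vs_{i_u}x=s_{i_r}\cdots\hat{s}_{i_k}\cdots\hat{s}_{i_u}\cdots s_{i_t}$, and then cancelling the common prefix $s_{i_r}\cdots s_{i_{k-1}}$ on the left and the common suffix $s_{i_{u+1}}\cdots s_{i_t}$ on the right, I arrive at the key element identity
\[
s_{i_k}s_{i_{k+1}}\cdots s_{i_u}=s_{i_{k+1}}\cdots s_{i_{u-1}}.
\]
The right-hand side is a contiguous subword of the reduced word $\mathbf{W}'$, hence reduced of length $u-k-1$; comparing with the left-hand word of length $u-k+1$ shows that $s_{i_k}\cdots s_{i_u}$ is not reduced, verifying the first condition. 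Left-multiplying the identity by $s_{i_k}$ gives $s_{i_{k+1}}\cdots s_{i_u}=s_{i_k}s_{i_{k+1}}\cdots s_{i_{u-1}}$, whose right-hand side is again a contiguous subword of $\mathbf{W}'$ and thus reduced of length $u-k$, so the left-hand word $s_{i_{k+1}}\cdots s_{i_u}$ is reduced. Finally $s_{i_k}\cdots s_{i_{u-1}}$ is itself a contiguous subword of $\mathbf{W}'$, hence reduced. All three conditions for $\{s_{i_k},s_{i_u}\}$ to be a deletion pair are therefore met.

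The only real subtlety is making sure that SEC delivers a $k$ whose location relative to $u$ lets the common prefix/suffix cancellation produce the element identity above; but since the two cases $k<u$ and $k>u$ are related by reversal, this is notational bookkeeping rather than a genuine obstacle. Once one has the element identity $s_{i_k}\cdots s_{i_u}=s_{i_{k+1}}\cdots s_{i_{u-1}}$, the three deletion-pair conditions follow almost for free from the single observation that contiguous subwords of a reduced word are reduced.
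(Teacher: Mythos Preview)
The paper does not supply its own proof of this lemma; it is quoted from Hersh \cite{Hersh} without argument. So there is nothing to compare against, and I will simply assess your proof on its merits.

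Your argument is essentially correct and quite clean: use the Strong Exchange Condition on the reduced word $\mathbf{W}'$ to locate a second deletable position $k$, cancel prefixes and suffixes to obtain the identity $s_{i_k}\cdots s_{i_u}=s_{i_{k+1}}\cdots s_{i_{u-1}}$, and then read off all three deletion-pair conditions from the fact that contiguous subwords of a reduced word are reduced. That part is fine.

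There is, however, one false assertion that you should remove. You write that ``multiplying by a reflection changes length by exactly $\pm1$,'' and use this to conclude $\ell(w)=t-r-1$. This is not true in general: for a reflection $t$ and arbitrary $w'$, the difference $\ell(w')-\ell(tw')$ can be any odd integer (for instance, in $S_4$ take $w'=w_0$ and $t=(1\,4)$; then $tw'=s_2$ and the length drops by $5$). Fortunately you do not actually need $\ell(w)=t-r-1$: your parity argument already gives $\ell(w)\le t-r-1<\ell(w')$, which is all that is required to invoke the Strong Exchange Condition. Simply delete the sentence about $\pm1$ and the claim that $\ell(w)$ is pinned down, and proceed directly from $\ell(w)<\ell(w')$ to SEC. (Uniqueness of $k$, which you also assert, \emph{is} correct for a reduced word~$\mathbf{W}'$, since the reflections $s_{i_r}\cdots s_{i_j}\cdots s_{i_r}$ for distinct positions $j$ in a reduced word are distinct; but you do not need uniqueness either.)
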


\begin{proposition}\label{prop:reduced}
Consider $x\leq w$ in a Coxeter group $W$, and fix a reduced
expression $\mathbf w= (i_1,\dots,i_t)$ for $w$.  Let
$\mathbf x_+=\{j_1,\dots, j_k\}$.
For any $p \leq t$,  consider
the product $\gamma_1 \dots \gamma_t$, where
\begin{equation*}
\gamma_r=\begin{cases}
s_{i_r}, &\text{ if $r\in\mathbf x_+$ or $r\geq p$,}\\
1 & \text{otherwise.}
\end{cases}
\end{equation*}
Then $\gamma_1 \dots \gamma_t$ is reduced.
\end{proposition}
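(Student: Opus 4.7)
The plan is a downward induction on $p$, with the base case $p = t+1$ (where the $\gamma$-word degenerates to the positive subexpression $s_{i_{j_1}} \cdots s_{i_{j_k}}$; this is a reduced expression for $x$ because the positive subexpression property forces each partial multiplication to be length-increasing). Throughout, let $l$ be the largest index with $j_l < p$ (so $l = 0$ if $p \leq j_1$), abbreviate $x' := s_{i_{j_1}} \cdots s_{i_{j_l}}$, and set $w_{\geq p} := s_{i_p} s_{i_{p+1}} \cdots s_{i_t}$, which is reduced since suffixes of reduced expressions are reduced. Under these abbreviations $\gamma_1 \cdots \gamma_t$ has word-length $l + (t - p + 1) = \ell(x') + \ell(w_{\geq p})$ and represents $x' w_{\geq p}$, so it suffices to show $\ell(x' w_{\geq p})$ equals this sum.

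For the inductive step I compare the $p$-word with the $(p+1)$-word. If $p \in \mathbf{x}_+$ the two words are literally identical (both contribute $s_{i_p}$ at position $p$), so reducedness is inherited from the inductive hypothesis. The substantive case is $p \notin \mathbf{x}_+$: here the $p$-word is obtained from the $(p+1)$-word by inserting a single letter $s_{i_p}$ between the prefix and the suffix, and the element represented changes from $u := x' w_{\geq p+1}$ to $v := x' s_{i_p} w_{\geq p+1} = x' w_{\geq p}$. The key identity is
\[
v \;=\; u \cdot \bigl( w_{\geq p+1}^{-1}\, s_{i_p}\, w_{\geq p+1} \bigr) \;=\; u \cdot s_\delta, \qquad \delta \;:=\; w_{\geq p+1}^{-1}(\alpha_{i_p}),
\]
so $\ell(v) = \ell(u) \pm 1$ and the induction reduces to establishing the $+$ sign.

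The $+$ sign is equivalent to the pair of positivity statements $\delta > 0$ and $u(\delta) > 0$, which together give $\delta \notin \mathrm{Inv}(u)$ and hence $\ell(u s_\delta) = \ell(u) + 1$. For $\delta > 0$: the identity $s_{i_p} w_{\geq p+1} = w_{\geq p}$ is a reduced expression, so left-multiplication by $s_{i_p}$ strictly lengthens $w_{\geq p+1}$, which is equivalent to $\alpha_{i_p} \notin \mathrm{Inv}(w_{\geq p+1}^{-1})$, i.e., $w_{\geq p+1}^{-1}(\alpha_{i_p}) > 0$. For $u(\delta) > 0$: evaluate $u(\delta) = x'\, w_{\geq p+1}\bigl(w_{\geq p+1}^{-1}(\alpha_{i_p})\bigr) = x'(\alpha_{i_p})$, and positivity of $x'(\alpha_{i_p})$ is precisely $x' s_{i_p} > x'$, which is the positive subexpression property applied at position $p$ (since $p \notin \mathbf{x}_+$ with $j_l < p \leq j_{l+1}$, using the convention $j_{k+1} := m$). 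Combining, $\ell(v) = \ell(u) + 1 = \ell(x') + (t - p + 1)$, matching the word-length of $\gamma_1 \cdots \gamma_t$; hence the word is reduced.

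I do not anticipate any serious obstacle: once the conjugated reflection $s_\delta$ is isolated, the two positivity checks distribute neatly between the two available hypotheses --- reducedness of $\mathbf{w}$ supplies one, and the positive subexpression property supplies the other. The only care needed is in the boundary cases of the index $l$ (in particular $l = 0$ when no $j_i$ is below $p$, and $l = k$ together with the convention $j_{k+1} := m$ when $p$ is past the last element of $\mathbf{x}_+$).
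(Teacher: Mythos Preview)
Your argument is correct and takes a genuinely different route from the paper. Both proofs use downward induction on $p$ with the same case split ($p\in\mathbf{x}_+$ versus $p\notin\mathbf{x}_+$), but the paper handles the substantive case combinatorially via Hersh's deletion-pair lemma (Lemma~\ref{TriciaLemma}): if the word with $s_{i_{p}}$ inserted were not reduced, then $s_{i_{p}}$ would belong to a deletion pair, contradicting the fact that both the suffix $s_{i_{p}} s_{i_{p+1}} \cdots s_{i_t}$ (a tail of $\mathbf{w}$) and the prefix $\gamma_1 \cdots \gamma_{p-1} s_{i_{p}}$ (by positivity of $\mathbf{x}_+$) are separately reduced. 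You instead isolate the conjugated reflection $s_\delta$ with $\delta = w_{\ge p+1}^{-1}(\alpha_{i_p})$ and verify $u s_\delta > u$ directly via roots. Your approach is more self-contained---it avoids importing the external lemma and makes transparent exactly which two hypotheses (reducedness of $\mathbf w$, positivity of $\mathbf x_+$) supply which inequality---while the paper's approach stays purely at the level of words and may generalize more readily to settings without a root system.

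One small correction: the assertion ``$v = u s_\delta$, so $\ell(v) = \ell(u) \pm 1$'' is not valid for a general reflection; multiplication by a non-simple reflection can change length by any odd amount. Likewise, $\delta > 0$ and $u(\delta) > 0$ together give only $\ell(u s_\delta) > \ell(u)$, not $\ell(u s_\delta) = \ell(u)+1$. The fix is already implicit in your final sentence: from $u(\delta) > 0$ conclude $\ell(v) > \ell(u)$, and combine this with the word-length bound $\ell(v) \le \ell(u) + 1$ (since the $p$-word has one more letter than the reduced $(p+1)$-word) to force $\ell(v) = \ell(u)+1$. Also, in this proposition the convention should read $j_{k+1} := t$, not $m$.
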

\begin{proof}
We will prove this by induction.  First consider $p=t$.  If
$i_t\in \mathbf x_+$ there is nothing to prove, since 
$s_{i_{j_1}} \dots s_{i_{j_k}}$ is reduced.
If $i_t \notin \mathbf x_+$ then assume that
$\gamma_1 \dots \gamma_t$ is not reduced.  This means that
$x s_{i_t} < x$, which 
contradicts the fact that $\mathbf x_+$ is the positive
subexpression for $x$.

Now by induction assume the proposition holds
for any $p$ between some $p'$ and $t$, where $p' \leq t$.
We want to prove it for $p:=p'-1$.
First suppose that $p'-1\in \mathbf x_+$.  In this
case the product $\gamma_1 \dots \gamma_t$ is the same for both
$p=p'$ and $p=p'-1$: in both cases, $\gamma_{p'-1}=s_{i_{p'-1}}$.
Therefore by induction it follows that $\gamma_1 \dots \gamma_t$ is reduced.

Now suppose that $p'-1 \notin \mathbf x_+$.
In this case the induction hypothesis
tells us only that $\gamma_1 \dots \gamma_{p'-2} \gamma_{p'} \dots \gamma_t$ 
is reduced;
we need to prove
that $\gamma_1 \dots \gamma_{p'-2} \gamma_{p'-1} \gamma_{p'} \dots \gamma_t=
      \gamma_1 \dots \gamma_{p'-2} s_{i_{p'-1}} s_{i_{p'}} \dots s_{i_t}$ 
is reduced.  Assume  it is not: then 
by Lemma \ref{TriciaLemma}, $s_{i_{p'-1}}$ belongs to a deletion pair
within 
$ \gamma_1 \dots \gamma_{p'-2} s_{i_{p'-1}} s_{i_{p'}} \dots s_{i_t}$. 
Note that $s_{i_{p'-1}} s_{i_{p'}} \dots s_{i_t}$
comprises a consecutive string of generators in a reduced expression and so
must be reduced.  Also note that by our argument in the first
paragraph,
$\gamma_1 \dots \gamma_{p'-2} s_{i_{p'-1}}$ must be reduced: otherwise
$\gamma_1 \dots \gamma_{p'-2} s_{i_{p'-1}} < \gamma_1 \dots \gamma_{p'-2}$,
which contradicts the fact that $\mathbf x_+$ is a positive
subexpression and does not contain $s_{i_{p'-1}}$.
But we've now shown that $s_{i_{p'-1}}$ cannot belong to a deletion pair
within 
$ \gamma_1 \dots \gamma_{p'-2} s_{i_{p'-1}} s_{i_{p'}} \dots s_{i_t}$,
a contradiction.
\end{proof}

\section{Morse matchings and the proof of contractibility}\label{CL}

In  this section we will construct a Morse matching on the face poset
of the closure of an arbitrary cell of $(G/P)_{\geq 0}$, such 
that matched edges are provably regular.  We will then 
use this to prove our main result: 
that the closure 
of each cell is contractible, and the boundary of each cell is 
homotopy equivalent to a sphere.

Recall the definition of the augmented face poset $\Q^J$ 
of $(G/P_J)_{\geq 0}$ from Section \ref{sec:regularity}.
Besides having a unique least element $\hat{0}$, $\Q^J$ also has a
unique greatest element: This  is $\hat{1}:=\mathcal P_{u_0, u_0,
w_0;>0}^J$, where $u_0$ and $w_0^J$ are the longest elements in $W_J$ and 
$W^J$, respectively.

The following  was proved in \cite{Wil}.

\begin{theorem}\cite{Wil}\label{LaurenTheorem}
$\Q^J$ is graded, thin, and EL-shellable. 
It follows that the face poset of 
$(G/P_J)_{\geq 0}$
is the face poset of 
a regular CW complex homeomorphic to a ball.
\end{theorem}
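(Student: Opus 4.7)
The plan is to establish the three structural properties (graded, thin, EL-shellable) of $\Q^J$ in order, and then invoke Bjorner's characterization of face posets of regular CW balls (essentially the same criterion used in the first half of Theorem \ref{BruhatBall}) to conclude. Throughout I will lean heavily on Theorem \ref{RTheorem}, which translates the face poset structure into Weyl group combinatorics, and on Dyer's EL-labeling of Bruhat order (Proposition \ref{ELDyer}).

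First I would verify that $\Q^J$ is graded with rank function $\rho(\hat 0)=0$ and $\rho(\mathcal P^J_{x,u,w;>0})=\ell(wu)-\ell(x)+1$. The order relation in Theorem \ref{RTheorem} says $\mathcal P^J_{x,u,w;>0}\le \mathcal P^J_{x',u',w';>0}$ iff we can factor $u=u_1u_2$ length-additively with $x'{u'}^{-1}\le xu_2^{-1}\le wu_1\le w'$. To check gradedness I would show that each such inequality can be refined step-by-step through covering relations of Bruhat order on $W$, coupled with ``transferring" a simple reflection between the $w$-part and the $u$-part; each such elementary step changes $\ell(wu)-\ell(x)$ by exactly $1$.

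Next, for thinness, I would fix a length-$2$ interval $[\mathcal P,\mathcal P']$ in $\Q^J$ and translate, via Theorem \ref{RTheorem}, the condition of lying in this interval into the existence of refinements of the chain $x'{u'}^{-1}\le xu_2^{-1}\le wu_1\le w'$ in Bruhat order. The thinness of Bruhat order from Theorem \ref{BruhatBall} supplies exactly two refinements for each individual length-$2$ chain in $W$. The case analysis then organizes the combinations: where the two ``middle" elements of the Bruhat interval can occur, how they distribute across the three Bruhat inequalities, and how the factorization $u=u_1u_2$ is simultaneously adjusted. In every case one must match this count to exactly two elements in the open interval of $\Q^J$, which amounts to confirming that different Weyl group refinements produce distinct elements of $\mathcal I^J$.

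For EL-shellability, I would fix a reflection order $\preceq$ on $T$ and define an edge labeling $\lambda$ on $\Q^J$ as follows: on each covering $\mathcal P^J_{x,u,w;>0}\lessdot \mathcal P^J_{x',u',w';>0}$ assign the unique reflection in $T$ witnessing the elementary Bruhat step along the chain $x'{u'}^{-1}\le xu_2^{-1}\le wu_1\le w'$ (the three types of covers, corresponding to where the Bruhat cover occurs, all receive reflection labels in a compatible way). Edges from $\hat 0$ to atoms are labeled by a minimal reflection. To show this is an EL-labeling, one must prove, for each interval of $\Q^J$, existence and uniqueness of the lex-minimal maximal chain and that it coincides with the unique increasing one; I would do this by reducing to Dyer's result on Bruhat order (Proposition \ref{ELDyer}) via the translation of intervals in $\Q^J$ to intervals of pairs in Bruhat order. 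Once EL-shellability and thinness are in place, Bjorner's criterion yields that $\Q^J\setminus\{\hat 0\}$ is the face poset of a regular CW complex homeomorphic to a ball.

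The main obstacle will be the EL-shellability step, specifically the bookkeeping between the three parameters $(x,u,w)$ and the Bruhat order machinery: ensuring that the rising chain condition in every interval of $\Q^J$ corresponds precisely to Dyer's rising chain condition in an appropriate Bruhat interval, and that the parameter $u$ (which is carried along but can be redistributed via $u=u_1u_2$) does not produce spurious increasing chains. Thinness, while conceptually straightforward, also requires a nontrivial case analysis for the same reason, and is a necessary input before appealing to Bjorner's CW-poset criterion.
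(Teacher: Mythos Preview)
The paper does not give its own proof of this statement: Theorem~\ref{LaurenTheorem} is simply cited from \cite{Wil} (Williams, ``Shelling totally non-negative flag varieties'') and used as a black box. There is nothing in the present paper to compare your argument against.

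That said, your outline is broadly in the spirit of what is done in \cite{Wil}, but you are underestimating the difficulty of the EL-labeling step. The labeling in \cite{Wil} is not a single reflection order on $T$ applied uniformly; rather, one takes two copies of the reflections (one for the ``$x$-side'' covers and one for the ``$w$-side'' covers of Lemma~\ref{lem:cover}), orders them with one reflection order and its reverse, and declares one block to precede the other. The delicate point---which your sketch does not address---is verifying the rising-chain condition for intervals that straddle both Type~1 and Type~2 covers, since an increasing chain in $\Q^J$ must first exhaust all the $w$-side moves and then all the $x$-side moves (or vice versa), and one has to check that this forced structure is compatible with the factorizations $u=u_1u_2$ and yields a unique lexicographically-first chain. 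Your proposal to ``reduce to Dyer's result via the translation of intervals in $\Q^J$ to intervals of pairs in Bruhat order'' is exactly where the real work lies, and it is not a formality: the parabolic quotient and the redistribution of $u$ genuinely complicate the argument beyond what Proposition~\ref{ELDyer} handles directly.
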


It will be useful for us to classify the cover relations in $Q^J$.
The following classification is analogous to the 
one used in \cite{Wil}, with the roles of $x$ and $w$ reversed.

\begin{lemma}\label{lem:cover}
The cover relations in $\Q^J$ fall into the following three categories.
\begin{description}
\item[Type 1] $\mathcal P_{x',v,w;>0}^J \lessdot \mathcal 
P_{x,u,w;>0}^J$ such that
     $x < x'$.  It follows that
     $xu^{-1} \lessdot x' v^{-1}$.
\item[Type 2] $\mathcal P_{x,v,w';>0}^J \lessdot \mathcal P_{x,u,w;>0}^J$ such that
     $w' \leq w$.  It follows that
     $w'v \lessdot wu$.
\item[Type 3]  $\hat{0} \lessdot \mathcal P_{x,u,w;>0}^J$ where $\mathcal P_{x,u,w;>0}^J$
    is a $0$-cell.  It follows  that $x=wu$.
\end{description}
\end{lemma}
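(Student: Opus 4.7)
The plan is to apply Rietsch's order characterization (Theorem~\ref{RTheorem}) together with the dimension formula
\[
\dim\mathcal P_{x,u,w;>0}^{J}\;=\;\ell(wu)-\ell(x)\;=\;\ell(w)+\ell(u)-\ell(x),
\]
whose second equality uses $\ell(wu)=\ell(w)+\ell(u)$ for $w\in W^{J}$, $u\in W_{J}$, together with $\ell(xu^{-1})=\ell(x)-\ell(u)$ for $x\in W^{J}_{\max}$, $u\in W_{J}$. Since $\Q^{J}$ is graded by dimension (Theorem~\ref{LaurenTheorem}), a cover is precisely an order relation whose dimension difference is one. First I would dispose of Type~3: a cell covers $\hat 0$ if and only if it is zero-dimensional, which forces $\ell(wu)=\ell(x)$; combined with $x\le wu$ from $(x,u,w)\in\I^{J}$, this yields $x=wu$.

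For the remaining covers, I would take $\mathcal P_{x',u',w';>0}^{J}\lessdot\mathcal P_{x,u,w;>0}^{J}$ with the lower cell not $\hat 0$ and invoke Theorem~\ref{RTheorem} to produce $u_{1},u_{2}\in W_{J}$ with $u_{1}u_{2}=u'$, $\ell(u')=\ell(u_{1})+\ell(u_{2})$, and a Bruhat chain
\[
xu^{-1}\;\le\;x'u_{2}^{-1}\;\le\;w'u_{1}\;\le\;w.
\]
Writing $\alpha,\beta,\gamma\ge 0$ for the three successive length increments of this chain and applying the length identities above, a direct bookkeeping computation should give
\[
\dim\mathcal P_{x,u,w;>0}^{J}-\dim\mathcal P_{x',u',w';>0}^{J}\;=\;\alpha+\gamma,
\]
so the covering condition collapses to $\alpha+\gamma=1$.

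This leaves exactly two subcases. If $\alpha=1$ and $\gamma=0$, then $w'u_{1}=w$; because $w$ and $w'$ are both in $W^{J}$ and both represent the same coset $w'W_{J}=wW_{J}$, uniqueness of minimal coset representatives forces $u_{1}=e$ and $w=w'$, whence $u_{2}=u'$. The chain becomes $xu^{-1}\lessdot x'u'^{-1}\le w$, which is the Type~1 relation (with $v=u'$), or the $w=w'$ instance of Type~2 when $x=x'$. If instead $\alpha=0$ and $\gamma=1$, then $xu^{-1}=x'u_{2}^{-1}$; writing $x'=xu^{-1}u_{2}$ exhibits $x'$ as an element of the coset $xW_{J}$, so uniqueness of maximal coset representatives forces $x=x'$ and $u=u_{2}$. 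Combined with $w'u_{1}\lessdot w$ and the length-additivity $\ell(u')=\ell(u_{1})+\ell(u)$, right-multiplication by $u$ (using compatibility of Bruhat order with $W_{J}$-multiplication in the presence of length-additivity) then yields $w'v\lessdot wu$ for $v=u'$, which is the Type~2 relation.

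The main obstacle, I expect, is verifying the dimension identity $\dim\mathcal P_{x,u,w;>0}^{J}-\dim\mathcal P_{x',u',w';>0}^{J}=\alpha+\gamma$: this is a routine manipulation of length functions, but it must be executed carefully using the identities for maximal and minimal coset representatives. Once that identity is in hand, the classification into the three types is forced by the coset-uniqueness arguments above together with the standard compatibility of Bruhat order with right multiplication by elements of $W_{J}$.
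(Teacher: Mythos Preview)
The paper does not prove this lemma; it simply states that the classification is analogous to the one in \cite{Wil} with the roles of $x$ and $w$ reversed. Your approach---computing the dimension difference as $\alpha+\gamma$ via Theorem~\ref{RTheorem} and the length identities for $W^J$ and $W^J_{\max}$, then splitting into the two cases $\alpha=1,\gamma=0$ and $\alpha=0,\gamma=1$---is correct and is essentially the argument one would expect in \cite{Wil}.

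There is one small point you skip. In the case $\alpha=1$, $\gamma=0$, you correctly deduce $w'=w$ and $xu^{-1}\lessdot x'u'^{-1}$, but to conclude this is a Type~1 cover you must check $x<x'$ when $x\ne x'$. This follows from the standard fact that the map sending an element to the maximal representative of its right $W_J$-coset is order-preserving for Bruhat order: since $x$ and $x'$ are the maximal representatives of $xu^{-1}W_J$ and $x'u'^{-1}W_J$ respectively, $xu^{-1}\le x'u'^{-1}$ forces $x\le x'$. Symmetrically, in the case $\alpha=0$, $\gamma=1$, to match the hypothesis ``$w'\le w$'' of Type~2 you should note that $w'u_1\lessdot w$ implies $w'\le w$ by projecting to minimal coset representatives. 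Finally, your appeal to ``compatibility of Bruhat order with $W_J$-multiplication in the presence of length-additivity'' is correct but could be made precise: if $a\lessdot b$ with $b=at$ for a reflection $t$, and $\ell(au)=\ell(a)+\ell(u)$, $\ell(bu)=\ell(b)+\ell(u)$, then $bu=(au)(u^{-1}tu)$ with $u^{-1}tu$ a reflection and $\ell(bu)=\ell(au)+1$, so $au\lessdot bu$. These are routine additions; the structure of your argument is sound.
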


\begin{remark}\label{interval}
If $Q$ is a poset, then
the {\it interval poset} $Int(Q)$ is defined to be the poset of
intervals $[x,y]$ of $Q$, ordered by containment.  
When $G/P_J$ is the complete flag variety, i.e. when
$J = \emptyset$, 
$\Q^J$ is simply the interval poset of the Bruhat order.
\end{remark}

\begin{theorem}\label{MorseMatch}
Choose any cell $\mathcal P^J_{x,u,w; >0}$
of $(G/P_J)_{\geq 0}$.
Then there is a 
Morse matching on the face poset of 
$\overline{\mathcal P^J_{x,u,w; >0}}$ with a single critical
cell of dimension $0$, which
restricts to a Morse matching on the face poset of 
the boundary 
$\bd({\mathcal P^J_{x,u,w; >0}})$ with one additional critical cell 
of top dimension.
Furthermore, all matched edges are good:
that is, if $\mathcal P_{x',u',w';>0}^J\lessdot \mathcal P_{x,u,w;>0}^J$
are matched, then $w'=w$ and there is a reduced expression 
$(i_1,\dots,i_m)$ of $w$
such that the positive subexpression 
$\mathbf{x'u'^{-1}_+}$ is equal to
$\mathbf{xu^{-1}_+} \cup \{k\}$ and moreover 
$\mathbf{xu^{-1}_+}$ contains $\{k+1,\dotsc, m\}$.
\end{theorem}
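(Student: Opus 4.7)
The strategy is to build the matching by extracting it from Chari's algorithm (Corollary \ref{algo}) applied to a carefully engineered EL-labeling of the augmented face poset of $\overline{\mathcal P^J_{x,u,w;>0}}$. By Theorem~\ref{LaurenTheorem} this augmented face poset is EL-shellable and is the face poset of a regular CW ball, so the pseudomanifold hypotheses needed to invoke Corollary \ref{algo} are satisfied. Running the algorithm from the top down will pair $\mathcal P^J_{x,u,w;>0}$ itself with a chosen codimension-$1$ face (this face becomes the extra critical cell of $\bd(\mathcal P^J_{x,u,w;>0})$), and will pair everything else inside the boundary except one $0$-cell (corresponding to $\hat{0}$, which is dropped when we pass from the augmented to the unaugmented face poset).

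First, I would fix a reduced expression $\mathbf w=(i_1,\dots,i_m)$ for $w$ and parametrize the Type~1 covers $\mathcal P^J_{x',u',w;>0}\lessdot\mathcal P^J_{x,u,w;>0}$ by the unique index $k\in\{1,\dots,m\}$ for which $\mathbf{x'{u'}^{-1}}_+=\mathbf{xu^{-1}}_+\cup\{k\}$. The plan is to design an edge labeling $\lambda$ of $\Q^J$ restricted to the interval $[\hat 0,\mathcal P^J_{x,u,w;>0}]$ in which Type~1 edges are labeled by this added position $k$ (in the total order inherited from $\{1,\dots,m\}$), while Type~2 and Type~3 edges (where $w$ strictly decreases, or the target is a $0$-cell) receive strictly smaller labels. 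One natural way to do this is to use the reflection-order EL-labeling of Dyer (Proposition \ref{ELDyer}) coming from a reflection order compatible with $\mathbf w$, adjusted so that reflections arising from Type~2/Type~3 drops are lexicographically below those arising from Type~1; with some bookkeeping, one can show directly (rather than via Dyer) that the resulting labeling is an EL-labeling of $[\hat 0,\mathcal P^J_{x,u,w;>0}]$ using that this interval is graded and thin.

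Now apply Corollary \ref{algo} starting at the top of the poset. At each unmatched cell $\sigma$, the algorithm pairs $\sigma$ with $\Last_{\lambda}(\sigma)$. By the labeling convention, the maximal-label covered element is always reached along a Type~1 edge whenever one exists, and for $\sigma=\mathcal P^J_{x',u',w;>0}$ the chosen partner $\mathcal P^J_{x'',u'',w;>0}$ has positive subexpression $\mathbf{x''{u''}^{-1}}_+=\mathbf{x'{u'}^{-1}}_+\cup\{k\}$ where $k$ is the largest index not yet in $\mathbf{x'{u'}^{-1}}_+$. This is exactly the ``good'' configuration in the theorem: by maximality of $k$, every position $>k$ lies in $\mathbf{x'{u'}^{-1}}_+$. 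The fact that such a matching partner genuinely exists---i.e.\ that there is an element $x''{u''}^{-1}$ of $W$ obtained by inserting the generator at position $k$ into the positive subexpression of $x'{u'}^{-1}$ and that this yields a reduced expression---is precisely what Proposition \ref{prop:reduced} guarantees. Repeatedly applying this, every $0$-cell $\mathcal P^J_{wu',u',w;>0}$ except the unique one visited last will be paired upward along a good Type~1 edge.

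The main obstacle I anticipate is verifying that this recipe produces a genuine \emph{Morse} matching---the acyclicity of the associated directed graph $G(M)$---and that the count of critical cells is exactly one (in the interior) plus one more when restricted to the boundary. Acyclicity should follow from the fact that the matching is produced by Chari's procedure applied to an EL-labeling, whose output is always a Morse matching on a pseudomanifold. The critical-cell count in the boundary is the only delicate combinatorial check: running the algorithm on the full augmented poset yields a single critical cell, namely $\hat 0$, and $\hat 0$ is not a cell of $\overline{\mathcal P^J_{x,u,w;>0}}$, so after removing it we must verify that exactly one $0$-cell is left unmatched (this becomes the critical $0$-cell) and, when we further remove $\mathcal P^J_{x,u,w;>0}$ to obtain $\bd(\mathcal P^J_{x,u,w;>0})$, its Type~1 partner becomes critical of top dimension. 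The ``goodness'' of matched pairs is then automatic from the construction via the labeling and Proposition \ref{prop:reduced}, so that Theorem~\ref{regularity} is applicable and the matching can be fed into Theorem \ref{MainMorseTheorem} to deduce the contractibility and sphere statements of the main theorem.
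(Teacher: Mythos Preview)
Your plan shares the right ingredients with the paper---Dyer's reflection-order labeling, Chari's matching construction, and Proposition~\ref{prop:reduced} to certify that inserting the maximal missing index $k$ gives a reduced subexpression---but it misses the structural idea that actually makes the argument go through. The face poset of $\overline{\mathcal P^J_{x,u,w;>0}}$ contains cells $\mathcal P^J_{x',u',y;>0}$ for \emph{every} $y\in W^J$ with $xu^{-1}\le y\le w$, not just $y=w$; in particular the $0$-cells are indexed by such $y$ (with $x'{u'}^{-1}=y$), not by $\mathcal P^J_{wu',u',w;>0}$ as you wrote. Your position-$k$ labeling of Type~1 edges only makes sense relative to a fixed reduced expression of the \emph{third parameter} $y$, and there is no single reduced word of $w$ that organizes all these fibers coherently into one EL-labeling with the properties you want. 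You also never actually construct the labeling---``Dyer's labeling, adjusted so that Type~2/Type~3 labels are smaller'' is a wish, not a definition---so the appeal to Corollary~\ref{algo} is not justified.

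The paper avoids this by \emph{decomposing} rather than globalizing. It slices the face poset into fibers $S^J_{xu^{-1}}(y)=\{\mathcal P^J_{x',u',y;>0}: xu^{-1}\le x'{u'}^{-1}\le y\}$, one for each admissible $y$. Each fiber is (dual to) a Bruhat interval $[xu^{-1},y]$, so Dyer's EL-labeling applies directly with a reflection order chosen from a reduced expression of $y$; Chari's construction then yields a \emph{perfect} matching on that fiber (for $y>xu^{-1}$) whose edges are good with respect to $\mathbf y$. The union $\mathcal M^J_{x,u,w}$ of these fiberwise matchings is the desired Morse matching: it is a matching because matched pairs share the same $y$, and it is acyclic because any directed edge crossing between fibers (a Type~2 edge) strictly decreases $y$, so no cycle can leave a fiber. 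The unique critical cell is the singleton fiber at $y=xu^{-1}$. This fiberwise strategy is exactly what your proposal is missing; once you see it, the goodness verification you sketched (via the maximal missing index and Proposition~\ref{prop:reduced}) becomes Proposition~\ref{smallMorse}, applied fiber by fiber.
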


We will prove Theorem \ref{MorseMatch} in a series of steps.
Define $S_x(w):=\{\mathcal R_{v,w;>0} | x \leq v \leq w \}$,
and give this the poset structure inherited from $\Q^J$ (for $J=\emptyset$).
This poset is  isomorphic
to the (dual of the) Bruhat interval between $x$ and $w$.

\begin{proposition}\label{smallMorse}
$S_x(w)$ has a Morse matching $M_x(w)$ in 
which all matched edges 
are good.  If $x<w$, then $M_x(w)$  has no 
critical cells.  If $x=w$, $M_x(w)$  has one critical cell.
\end{proposition}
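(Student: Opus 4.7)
I proceed by induction on $\ell(w)-\ell(x)$. The base case $x=w$ is immediate: $S_x(w)=\{\mathcal R_{w,w;>0}\}$ is a single zero-cell, giving the empty matching with exactly one critical cell, as required. For the inductive step assume $x<w$, choose a simple reflection $s=s_{i_m}$ that is a right descent of $w$, and fix a reduced expression $\mathbf w=(i_1,\ldots,i_m)$ ending in $s$. The central construction is the involution $\phi\colon [x,w]\to[x,w]$ defined by $\phi(v)=vs$ whenever $vs\in[x,w]$ and $\phi(v)=v$ otherwise; its fixed set I denote by $F$. I claim each non-fixed pair $\{v,vs\}$ (with $v<vs$) is good with respect to $\mathbf w$ and $k=m$: since $s$ is a right descent of $vs$ but not of $v$, the characterization of positive subexpressions as rightmost reduced subwords gives $\mathbf{(vs)}_+=\mathbf v_+\cup\{m\}$, while the condition $\mathbf v_+\supseteq\{m+1,\ldots,m\}=\emptyset$ holds vacuously.

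\emph{Case 1:} some right descent $s$ of $w$ is not a right descent of $x$. The lifting property of Bruhat order guarantees $vs\in[x,w]$ for every $v\in[x,w]$: when $vs>v$, $vs>v\geq x$ gives $vs\geq x$, and $vs\leq w$ holds by lifting; when $vs<v$, the lifting lemma (if $xs>x$, $vs<v$, and $x\leq v$, then $x\leq vs$) gives $vs\geq x$. Hence $F=\emptyset$, $\phi$ is a perfect matching of good pairs, and the associated orientation of the Hasse diagram is acyclic since each matched edge uniformly points from its $s$-ascent endpoint to its $s$-descent endpoint.

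\emph{Case 2:} every right descent of $w$ is also a right descent of $x$. Then $x\in F$, so the fixed set is nonempty and must be matched using the induction hypothesis. The plan is to identify $F$, as a sub-poset of $[x,w]$ under Bruhat order, with a smaller Bruhat interval $[x,\tilde w]$ satisfying $\ell(\tilde w)-\ell(x)<\ell(w)-\ell(x)$: parameterizing $F$ by $y:=vs$ identifies $F$ with $F^{*}=\{y:xs\leq y,\ y\not\geq x,\ s\text{ not a right descent of }y,\ y\leq ws\}$, which, after a parabolic-style reduction, corresponds to a Bruhat interval in a suitable Coxeter subsystem (for example, when $x$ is a simple reflection, to $[e,(ws)_J]$ in the parabolic $W_J=\langle I\setminus\{s\}\rangle$). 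Applying induction to this smaller interval yields a good matching; pulling back via $y\leftrightarrow ys$ gives matched pairs in $F$, and compatible reduced expressions for $w$ witnessing the goodness are obtained via Proposition~\ref{prop:reduced} by inserting the omitted simple reflections into a reduced expression for $\tilde w$. Combining the outer matching on $[x,w]\setminus F$ with the inductive matching on $F$, and verifying acyclicity of the combined Hasse-diagram orientation, yields the required perfect good Morse matching.

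\textbf{Main obstacle.} Case 2 is the principal difficulty: rigorously identifying $F$ with a smaller Bruhat interval in full generality, and verifying that the reduced-expression surgery preserves the goodness condition on positive subexpressions. This requires a delicate interaction between the lifting property of Bruhat order and Proposition~\ref{prop:reduced}, particularly in controlling how the positive subexpression for an inductive pair in $F$ extends to a positive subexpression in $\mathbf w$ after inserting the suppressed factors.
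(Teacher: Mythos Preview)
Your approach is genuinely different from the paper's, and the gap you flag in Case~2 is real.  Let me be specific about what fails and how the paper avoids it.

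\medskip
\textbf{Case~1 is fine.}  When some right descent $s$ of $w$ is not a right descent of $x$, the right-multiplication involution $v\mapsto vs$ is fixed-point free on $[x,w]$, each pair $(v,vs)$ with $v<vs$ is good with $k=m$ as you say, and acyclicity is the standard special-matching argument (in a putative directed cycle the odd-indexed vertices are forced to coincide via the lifting property).

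\medskip
\textbf{Case~2 does not go through as written.}  First, your induction parameter is wrong: the natural smaller interval is $[xs,ws]$, but $\ell(ws)-\ell(xs)=\ell(w)-\ell(x)$, so nothing decreases.  (For instance, in $S_3$ with $x=s_1$, $w=s_2s_1$, the only right descent of $w$ is $s_1$, which is also a right descent of $x$; one computes $F=[x,w]$ and $F^{*}=[e,s_2]$, with the same length difference~$1$.)  Inducting on $\ell(w)$ rather than $\ell(w)-\ell(x)$ would repair this.  Second, you have not established that $F^{*}$ is a Bruhat interval in general; your parenthetical covers only $x$ a simple reflection.  Third, even granting an inductive matching on $F$, you still owe a proof that the union of the outer matching on $[x,w]\setminus F$ and the inner matching on $F$ is acyclic; this is not automatic, since unmatched edges can cross between the two pieces.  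Finally, the ``reduced-expression surgery'' transferring goodness from the smaller interval back to $\mathbf w$ is only sketched.

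\medskip
\textbf{How the paper proceeds instead.}  The paper avoids any case split by choosing the matching \emph{via an EL-labeling}.  Fix $\mathbf w=(i_1,\dots,i_m)$, extend $\mathbf w^{-1}$ to a reduced word for $w_0$, take the associated reflection order (Definition~\ref{def:reflection}), and reverse it to obtain $\prec$.  Dyer's labeling of $[x,w]$ by $v'^{-1}v$ is then an EL-labeling (Proposition~\ref{ELDyer}); Corollary~\ref{Bruhat-Matching} and Proposition~\ref{ChariMorse} furnish a Morse matching $M_x(w)$ with no critical cells (one if $x=w$), and Corollary~\ref{Last} guarantees every matched edge carries the $\prec$-maximal label below its top.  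The point of the chosen reflection order is that the $\prec$-maximal label at $\mathcal R_{v,w;>0}$ is $s_{i_m}s_{i_{m-1}}\cdots s_{i_k}\cdots s_{i_{m-1}}s_{i_m}$ for the largest $k\notin\mathbf v_+$; Proposition~\ref{prop:reduced} then shows $\mathbf v_+\cup\{k\}$ is reduced (hence the positive subexpression for the partner), and by maximality of $k$ one has $\{k+1,\dots,m\}\subset\mathbf v_+$.  Thus every matched edge is good, uniformly and without induction.
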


\begin{proof}
We will construct $M_x(w)$ by using Dyer's EL-labeling
of the Bruhat interval (Proposition \ref{ELDyer}) and
Chari's observation that one can go from a shelling
to a Morse matching (Proposition \ref{ChariMorse}).  To deduce
that matched edges are good, we will choose our reflection 
order carefully and use
Corollary \ref{Last}.

Fix a reduced expression $\mathbf w=({i_1},\dots, {i_m})$
for $w$, and choose
a reduced expression for 
$w_0$ which begins with
$\mathbf w^{-1}$.  
By Definition \ref{def:reflection},
this gives a reflection order.  Let 
$\prec$ be the reverse of this order; 
by Remark \ref{reverse}, $\prec$ is also a reflection order.

Label the edge  
$\mathcal R_{v',w;>0} \lessdot \mathcal R_{v,w;>0}$ (where $v'\gtrdot v$) 
in $S_x(w)$ with 
the reflection $\tau$ such that $v=v' \tau$.  
By Proposition \ref{ELDyer}, this gives an EL-labeling
of $S_x(w)$.  
If $S_x(w)$ has at least
two elements then by Corollary \ref{Bruhat-Matching}, there is
a Morse matching $M_x(w)$ on 
$S_x(w)$
with no critical cells.
Otherwise,
if $S_x(w)$ has  one element, i.e. if $x=w$,
then we take $M_x(w)$ to be the empty matching with  one 
critical cell.

We now need to show that all edges in $M_x(w)$ are good.  By Corollary
\ref{Last}, if $\tau$ labels
the edge  $\mathcal R_{v,w;>0} \gtrdot \mathcal R_{v',w;>0}$ 
(for $v' \gtrdot v$) and this edge
is in $M_x(w)$, then among all edge labels  going from $\mathcal R_{v,w;>0}$ to lower-dimensional
cells, $\tau$ is maximal in $\prec$.  So we need to analyze cover relations
corresponding to maximal labels.

Let $\mathbf v_+ = \{j_1,\dots,j_r\}$.
Let $k$ be maximal ($1 \leq k \leq m$) such that 
$k\notin \{j_1,\dots,j_r\}$.  We first claim that 
$\mathbf u = \{j_1,\dots,j_r\} \cup \{k\}$ is a reduced subexpression 
of $\mathbf w$, hence $\mathcal R_{u,w;>0} \lessdot \mathcal R_{v,w;>0}$,
and that $\mathbf u$ is positive.  Second,
we claim that 
the  label 
on the edge from $\mathcal R_{v,w;>0}$ to $\mathcal R_{u,w;>0}$
is maximal among all edge labels from $\mathcal R_{v,w;>0}$ down to a lower-dimensional cell.

Proposition \ref{prop:reduced} implies the first claim that
$\{j_1,\dots,j_r\} \cup \{k\}$ is a reduced subexpression 
of $\mathbf w$.  
Knowing that it is reduced, it is clear that it is positive.

To see that the second claim is true, note that 
by the choice of $k$, the  label 
on the edge  $\mathcal R_{v,w;>0} \gtrdot \mathcal R_{u,w;>0}$ is 
$u^{-1}v = s_{i_m} s_{i_{m-1}} \dots s_{i_k} \dots
s_{i_{m-1}} s_{i_m}$. 
Furthermore, in our reflection
order,
$s_{i_m} \succ s_{i_m} s_{i_{m-1}} s_{i_m} \succ \dots\succ 
s_{i_m} s_{i_{m-1}} \dots s_{i_k} \dots s_{i_{m-1}} s_{i_m} \succ \dots$.
Since $k$ is maximal such that $k \notin \{j_1,\dots,j_r\}$,
if we define $v^{''}=v s_{i_m} s_{i_{m-1}} \dots s_{i_{\ell}} \dots
s_{i_{m-1}} s_{i_m}$
for $\ell > k$, then $\ell \in \{j_1,\dots,j_r\}$
so an expression for $v^{''}$ is  
given by $\{j_1,\dots,j_r\} \setminus \{\ell\}$.  
In particular, $v^{''}<v$ and so $\mathcal R_{v,w;>0}$ does not cover
$\mathcal R_{v^{''},w;>0}$.  On the other hand, we know that 
$\mathcal R_{v,w;>0}$ covers $\mathcal R_{u,w;>0}$,  and that the label
on this edge is 
$s_{i_m} s_{i_{m-1}} \dots s_{i_{k}} \dots
s_{i_{m-1}} s_{i_m}$.   
By our choice
of reflection order,
this label is maximal among all edges from
$\mathcal R_{v,w}$ down to lower-dimensional cells.

Finally, by the choice of $k$, and since 
$\{j_1,\dots,j_r\}\cup \{k\} = \mathbf u$ is positive,
this cover relation is good.
Therefore every matched edge in $M_x(w)$ is good.
\end{proof}

\begin{remark}
Recently Brant Jones has constructed explicit 
matchings of the Hasse diagram of an interval in Bruhat order; 
he also proved that his matchings coincide with the matchings
$M_x(w)$ that we constructed in Proposition \ref{smallMorse} \cite{Jones}.
\end{remark}

\begin{remark}
If $x$ is the identity element in $W$, then the 
Morse matching constructed in Proposition 
\ref{smallMorse} will actually be a multiplication
matching by a Coxeter generator.  This is 
a so-called {\it special matching}, and is relevant to 
Kazhdan-Lusztig theory \cite{Brenti}.  
Anders Bjorner suggested using special matchings to 
construct acyclic matchings, and realized that 
one could use them to obtain an acyclic matching 
for the face poset of the entire space $(G/B)_{\geq 0}$
\cite{BjornerComment}.  We are grateful for his insights.
\end{remark}

We now turn to the proof of Theorem \ref{MorseMatch}

\begin{proof}
We partition the elements of the face poset  
of the 
closure of 
$\mathcal P_{x,u,w; >0}^J$ into subsets 
$S^{J}_{xu^{-1}}(y)=
\{\mathcal P_{x',u',y;>0}^J \ \vert \ xu^{-1} \leq x'u'^{-1} \leq y\}$, 
for each $y\in W^J$ such that
$xu^{-1} \leq y \leq w$.
By Lemma \ref{lem:cover}, the restriction of the face poset $\Q^J$ to 
${S}^J_{xu^{-1}}(y)$
is isomorphic to the (dual of the) Bruhat interval between $xu^{-1}$ and $y$,
so ${S}^J_{xu^{-1}}(y)$ and $S_{xu^{-1}}(y)$ are isomorphic as posets:
we simply identify $\mathcal P_{a,b,y;>0}^J$ with $\mathcal R_{ab^{-1},y;>0}$.

We can now apply Proposition \ref{smallMorse}, which gives
us a Morse matching $M^J_{xu^{-1}}(y)$
on $S^J_{xu^{-1}}(y)$ such that all matched 
edges are good.  This matching has either zero or one critical cell,
based on whether $xu^{-1} < y$ or $xu^{-1}=y$.

We now define $$\mathcal M^J_{x,u,w} = \cup_{y\in W^J, xu^{-1} \leq y \leq w}
                   M^J_{xu^{-1}}(y).$$
Since each $M^J_{xu^{-1}}(y)$ is a matching,
and any two matched elements $\mathcal P_{a,b,y;>0}^J$ and $\mathcal P_{a',b',y;>0}^J$ 
in $\mathcal M^J_{x,u,w}$
have the 
same third factor $y$,  $\mathcal M^J_{x,u,w}$ is also a matching.

Let us assume for the sake of contradiction that 
there is a cycle in $G(\mathcal M^J_{x,u,w})$.
Since each $G(M^J_{xu^{-1}}(y))$
is acyclic, there must be some edges in the cycle which pass between two different
$S^J_{xu^{-1}}(y)$'s.  Each such edge must be directed from the 
higher-dimensional cell $\mathcal P_{a,b,y;>0}^J$ to the lower-dimensional cell
$\mathcal P_{a',b',y';>0}^J$ for $y \neq y'$, so 
by Lemma \ref{lem:cover}, $y'<y$.   So if we traverse the cycle and 
look at the sequence of poset elements $\mathcal P_{*,*,y;>0}^J$ that
we encounter, the third factor
can only decrease.  Therefore it is impossible to return to the 
element of the cycle at which we started, which is a contradiction.

As $y$ varies over elements of $W^J$ between $xu^{-1}$ and $w$, 
we have that $M^J_{xu^{-1}}(y)$ has no critical cells for $xu^{-1}<y$
and it has one critical cell $\mathcal P_{x,u,xu^{-1};>0}^J$
for $xu^{-1}=y$.  Therefore 
$\mathcal M^J_{x,u,w}$ has a unique critical cell,
the $0$-dimensional cell $\mathcal P_{x,u,xu^{-1};>0}^J$.

Since the face poset of $\overline{\mathcal P^J_{x,u,w;>0}}$
has a unique  cell of top dimension $\ell(w)-\ell(xu^{-1})$
which is matched in $M^J_{x,u,w}$, when we restrict $M^J_{x,u,w}$
to 
the boundary $\bd({\mathcal P^J_{x,u,w;>0}})$, we will get a Morse matching
with one additional critical
cell of top dimension $\ell(w)-\ell(xu^{-1})-1$.
This completes the proof of the theorem.
\end{proof}

\begin{corollary}\label{cor:matching}
Choose any cell $\mathcal P^J_{x,u,w;>0}$ of 
$(G/P_J)_{\geq 0}$.  Then there is a Morse matching on the face poset 
 of 
$\overline{\mathcal P^J_{x,u,w;>0}}$
with a single critical cell 
of dimension $0$ in which all matched edges are regular;
it restricts to a Morse matching on the face poset of the boundary
$\bd({\mathcal P^J_{x,u,w;>0}})$ 
with one additional critical cell of top 
dimension.
\end{corollary}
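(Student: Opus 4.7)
The plan is to deduce this corollary as an essentially immediate consequence of Theorem~\ref{MorseMatch} combined with Theorem~\ref{regularity}. Theorem~\ref{MorseMatch} already produces a Morse matching on the face poset of $\overline{\mathcal P^J_{x,u,w;>0}}$ with exactly one critical cell of dimension $0$, restricting to the boundary with one additional critical cell of top dimension. The only content that this corollary adds to Theorem~\ref{MorseMatch} is the promotion of each matched edge from \emph{good} to \emph{regular} (in the sense of Definition~\ref{regularface}).

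The execution is two-step. First, let $\mathcal M^J_{x,u,w}$ be the Morse matching produced by Theorem~\ref{MorseMatch}. For every matched pair $\mathcal P^J_{x',u',w';>0} \lessdot \mathcal P^J_{x,u,w';>0}$, that theorem asserts that the two cells share a common third factor $w'$, and supplies a reduced expression $\mathbf{w'}=(i_1,\ldots,i_m)$ of $w'$ with respect to which the pair is good (i.e.\ $\mathbf{(x'u'^{-1})_+} = \mathbf{(xu^{-1})_+} \cup \{k\}$ and $\mathbf{(xu^{-1})_+}$ contains $\{k+1,\ldots,m\}$). Second, apply Theorem~\ref{regularity} to each such pair: because the pair is good with respect to $\mathbf{w'}$, the cell $\mathcal P^J_{x',u',w';>0}$ is a regular face of $\mathcal P^J_{x,u,w';>0}$ with respect to the attaching map $\Phi^{\geq 0}_{\mathbf{xu^{-1}_+},\mathbf{w'}} \circ \pi^J$. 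Combining this with the counts of critical cells from Theorem~\ref{MorseMatch} (and the fact that removing the top cell leaves the restriction to the boundary), the corollary follows.

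The one bookkeeping point worth highlighting, rather than a genuine obstacle, concerns the consistency of attaching maps. Recall that in the proof of Theorem~\ref{MorseMatch} the matching is constructed as a disjoint union $\bigcup_y M^J_{xu^{-1}}(y)$ indexed by $y\in W^J$ with $xu^{-1}\le y\le w$, and Proposition~\ref{smallMorse} fixes a single reduced expression $\mathbf y$ for each $y$. Every matched edge inside $M^J_{xu^{-1}}(y)$ is then good with respect to this same $\mathbf y$. Consequently, by equipping each cell $\mathcal P^J_{a,b,y;>0}$ in the closure with the attaching map $\Phi^{\geq 0}_{\mathbf{ab^{-1}_+},\mathbf y} \circ \pi^J$ built from this fixed $\mathbf y$, every matched edge incident to that cell is good with respect to a single common attaching map, and Theorem~\ref{regularity} applies uniformly. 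With this choice there is no further difficulty.
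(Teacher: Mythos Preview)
Your proposal is correct and takes essentially the same approach as the paper, which simply says ``This follows from Theorem~\ref{regularity} and Theorem~\ref{MorseMatch}.'' Your additional paragraph on consistency of attaching maps is more detail than the paper provides, but it is accurate and consistent with how the matching is actually built in the proof of Theorem~\ref{MorseMatch} (via a single reduced expression $\mathbf y$ per slice $S^J_{xu^{-1}}(y)$); in fact an even simpler observation suffices, since each cell has at most one matched face below it and one may choose its attaching map accordingly.
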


\begin{proof}
This follows from Theorem \ref{regularity}
and Theorem \ref{MorseMatch}.
\end{proof}

We now prove our main result.

\begin{theorem}
The closure of each cell of $(G/P)_{\geq 0}$ is contractible,
and the boundary of each cell of $(G/P)_{\geq 0}$ is homotopy
equivalent to a sphere.
\end{theorem}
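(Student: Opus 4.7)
The plan is to apply Forman's main theorem for general CW complexes (Theorem \ref{MainMorseTheorem}) to the Morse matchings furnished by Corollary \ref{cor:matching}, and to read off the homotopy type of each closure and each boundary directly from its critical cells.

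First, for any cell $\mathcal P_{x,u,w;>0}^J$ I would observe that its closure $\overline{\mathcal P_{x,u,w;>0}^J}$ is a CW subcomplex of $(G/P_J)_{\geq 0}$: the ambient space is a CW complex by Theorem \ref{RWCW}, and by Theorem \ref{RTheorem} the closure of any cell is a union of cells. Consequently $\overline{\mathcal P_{x,u,w;>0}^J}$, and likewise its boundary $\bd(\mathcal P_{x,u,w;>0}^J)$, satisfies the subcomplex property required in Theorem \ref{MainMorseTheorem}. Corollary \ref{cor:matching} then supplies a Morse matching on the face poset of $\overline{\mathcal P_{x,u,w;>0}^J}$ with exactly one critical cell, of dimension $0$, all of whose matched pairs are regular. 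Applying Theorem \ref{MainMorseTheorem} shows that this closure is homotopy equivalent to a CW complex with a single $0$-cell, i.e.\ a point, so the closure is contractible.

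For the boundary I would use the restricted Morse matching of Corollary \ref{cor:matching}, which has exactly two critical cells: one of dimension $0$ and one of dimension $d-1$, where $d := \dim \mathcal P_{x,u,w;>0}^J$. Theorem \ref{MainMorseTheorem} then yields a homotopy equivalent CW model with one $0$-cell and one $(d-1)$-cell. When $d-1=0$ this model is $S^0$; when $d-1 \geq 1$ the attaching map of the $(d-1)$-cell has no choice but to collapse its boundary sphere onto the unique $0$-cell, so the model is $D^{d-1}/\partial D^{d-1}\cong S^{d-1}$. Either way, $\bd(\mathcal P_{x,u,w;>0}^J)\simeq S^{d-1}$, as required.

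The genuine difficulty has been absorbed into the preceding sections: arranging a Morse matching whose matched pairs are provably regular faces (Theorem \ref{MorseMatch} together with the regularity criterion, Theorem \ref{regularity}) is the technical heart of the argument. Given those inputs, the present theorem is an immediate application of discrete Morse theory, the only additional ingredient being the elementary fact that a CW complex with one $0$-cell and one top-dimensional cell must be a sphere.
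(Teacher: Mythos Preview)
Your proof is correct and follows essentially the same route as the paper: verify the subcomplex property via Theorems \ref{RWCW} and \ref{RTheorem}, invoke the Morse matching of Corollary \ref{cor:matching}, and apply Theorem \ref{MainMorseTheorem} to read off the homotopy type from the critical cells. The only difference is cosmetic---you separate the $d-1=0$ case for the boundary, whereas the paper treats all dimensions uniformly---but the argument is the same.
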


\begin{proof}
Choose an arbitrary cell of 
$(G/P)_{\geq 0}$  and let $K$ be its closure.
Note that $K$ is a CW complex with the subcomplex property because 
Theorems \ref{RWCW} and \ref{RTheorem} imply that 
$(G/P)_{\geq 0}$ is.  
Let $Q$ be the face poset of $K$.
By Corollary \ref{cor:matching}, $Q$ has a 
Morse matching with a unique critical 
cell of dimension $0$, in which all matched edges are regular.
Therefore Theorem \ref{MainMorseTheorem} implies $K$ is contractible.

Now let $K'$ be the boundary of an arbitrary  
cell and let $Q'$ 
be its face poset.
By Corollary \ref{cor:matching}, $Q'$ has a 
Morse matching with two critical cells, one of dimension $0$
and one of top dimension, say $p$,
in which all matched edges are regular.
Therefore Theorem \ref{MainMorseTheorem} implies that $K'$ is homotopy
equivalent to a CW complex with one $0$-dimensional cell $\sigma$ and 
one $p$-dimensional cell  whose boundary is glued to $\sigma$.
This is precisely a $p$-sphere.
\end{proof}

\begin{remark}
Since a Morse function actually gives rise
to a 
concrete {\it collapsing} \cite{RF} of a CW complex, in fact we have shown 
that the
closure of a cell is {\it collapsible.}
\end{remark}

\begin{remark}
One can give a  simpler proof that the closure of each cell
in the totally non-negative part of the type A Grassmannian 
$(Gr_{kn})_{\geq 0}$ is contractible.  In that case, one can prove
directly that whenever a cell
$\sigma$ has codimension $1$ in the closure of $\tau$, then 
$\sigma$ is a regular face of $\tau$.  This follows from the 
technology of \cite{Postnikov}: in particular, 
Theorem 18.3, Lemma 18.9, and Corollary 18.10.  Then by 
Theorem \ref{LaurenTheorem}, the poset of cells of 
$(Gr_{kn})_{\geq 0}$ is a CW poset with an EL-labeling (hence a shelling), 
so by 
Proposition \ref{ChariMorse}, we have the requisite Morse matching.
\end{remark}

Using Corollary \ref{algo}, we see that 
there is a more concrete way to describe the matchings 
$M_x(w)$ of $S_x(w)$.

\begin{remark}
Fix a reduced expression ${\mathbf w}=({i_1},\dots, {i_m})$ for $w$.
Start with
the maximal element ${\mathcal R_{x,w;>0}}$.
Let $k$ be maximal such that $1\leq k\leq m$
and $k \notin \mathbf x_+$.
Then $\mathbf x_+ \cup \{k\}$ is the positive 
subexpression for an element $v>x$.  
We match $\mathcal R_{x,w;>0}$ to $\mathcal R_{v,w;>0}$.
Now apply the same procedure 
to every element of dimension 
$\dim \mathcal R_{x,w;>0} - 1$  which has not been matched
(the order in which we consider these elements does not matter).
Continue in this fashion, from higher to lower-dimensional cells.
\end{remark}


\end{document}